\numberwithin{equation}{section}
\newtheorem{Thm}{Theorem}[section]
\newtheorem{Prop}[Thm]{Proposition}
\newtheorem{Lem}[Thm]{Lemma}
\newtheorem{Cor}[Thm]{Corollary}
\theoremstyle{definition}
\newtheorem{Rem}[Thm]{Remark}
\newtheorem{Def}[Thm]{Definition}
\newtheorem{Expl}[Thm]{Example}
\newcommand{\R}{\mathbb{R}}
\newcommand{\E}[1]{{\rm E}(#1)}
\newcommand{\cA}{\mathscr{A}}
\newcommand{\es}{\emptyset}
\newcommand{\Lip}{\operatorname{Lip}}
\newcommand{\ol}{\overline}
\renewcommand{\rho}{\varrho}
\newcommand{\sig}{\sigma}
\title{
Weakly Externally Hyperconvex Subsets
and Hyperconvex Gluings
}
\author{
Benjamin Miesch and
Ma\"el Pav\' on}
\address{Department of Mathematics, ETH Z\"urich, 8092 Z\"urich, Switzerland}
\email{benjamin.miesch@math.ethz.ch}
\email{mael.pavon@math.ethz.ch}
\date{\today}
\begin{document}


\begin{abstract}
	We give a necessary and sufficient condition for gluings of hyperconvex metric spaces along weakly externally hyperconvex subsets in order that the resulting space be hyperconvex. This leads to a full characterization of gluings of two isometric copies of the same hyperconvex space.
	
	Furthermore, we investigate the case of gluings of finite dimensional hyperconvex linear spaces along linear subspaces. For this purpose, we characterize the convex polyhedra in $l_\infty^n$ which are weakly externally hyperconvex.
\end{abstract}

\maketitle


\section{Introduction}

Hyperconvexity can be regarded as a general metric notion of weak non-positive curvature. In relation with Isbell's injective hull, developing tools for hyperconvex metric spaces can lead to improvements and extensions of analogue results of CAT(0) geometry.
The present work provides a new source of hyperconvex metric spaces via gluing along weakly externally hyperconvex subsets and proves new results on weakly externally hyperconvex subsets of $l_{\infty}(I)$.

A metric space $(X,d)$ is called \emph{hyperconvex} if for any collection $\{ B(x_i,r_i)\}_{i\in I}$ of closed balls with $d(x_i,x_j) \leq r_i + r_j$, for all $i,j \in I$, we have $\bigcap_{i\in I} B(x_i,r_i) \neq \emptyset$.	Hyperconvex spaces are the same as absolute 1-lipschitz retracts or injective metric spaces, see \cite{Aro}.

A non-empty subset $A$ of a metric space $(X,d)$ is \emph{externally hyperconvex} in $X$ if for any collection of closed balls $\{B(x_i,r_i)\}_{i \in I}$ in $X$ with $d(x_i,x_j) \leq r_i+r_j$ and $d(x_i,A)\leq r_i$, for all $i,j \in I$, we have $A\cap \bigcap_{i\in I} B(x_i,r_i) \neq \emptyset$ and $A$ is \emph{weakly externally hyperconvex} if for any $x \in X$, $A$ is externally hyperconvex in $A \cup \{x\}$.

We first extend the work initiated in \cite{Mie} by considering gluings along weakly externally hyperconvex subsets:

\begin{Thm}\label{thm:gluing along WEH}
	Let $(X,d)$ be the metric space obtained by gluing a family of hyperconvex metric spaces $(X_\lambda,d_\lambda)_{\lambda \in \Lambda}$ along some set $A$ such that for each $\lambda\in \Lambda$, $A$ is weakly externally hyperconvex in $X_\lambda$. Then, $X$ is hyperconvex if and only if for all $\lambda \in \Lambda$ and all $x \in X \setminus X_\lambda$, the set $B(x,d(x,A))\cap A$ is externally hyperconvex in $X_\lambda$. Moreover, if $X$ is hyperconvex, the subspaces $X_\lambda$ are weakly externally hyperconvex in $X$.
\end{Thm}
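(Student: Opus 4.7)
I would build the proof on a distance decomposition lemma: for $\lam \in \Lam$, $z \in X_\lam$, and $x \in X \sm X_\lam$ (say $x \in X_\mu$), setting $B_x := B(x, d(x, A)) \cap A$, one has
\[
d(z, x) = d(x, A) + d_\lam(z, B_x).
\]
The $\leq$ direction is the triangle inequality through any $b \in B_x$. For $\geq$, pick any $a \in A$ appearing in the infimum $d(z, x) = \inf_{a \in A}(d_\lam(z, a) + d_\mu(x, a))$; if $a \in B_x$ the bound is trivial, otherwise weak external hyperconvexity of $A$ in $X_\mu$ applied to $B(x, d(x, A))$ and $B(a, d_\mu(x, a) - d(x, A))$---whose pairwise Helly is saturated and both of which touch $A$---yields $b \in B_x$ with $d(a, b) \leq d_\mu(x, a) - d(x, A)$, hence $d_\lam(z, a) + d_\mu(x, a) \geq d_\lam(z, b) + d(x, A) \geq d_\lam(z, B_x) + d(x, A)$. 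Infimizing over $a$ gives the bound. In particular $d(x, X_\lam) = d(x, A)$.

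For necessity, assume $X$ is hyperconvex. Given a pairwise Helly family $\{B(y_i, r_i)\}_{i \in \bw}$ in $X_\lam$ with $d_\lam(y_i, B_x) \leq r_i$, I would adjoin the ball $B(x, d(x, A))$ in $X$; the cross-conditions $d(y_i, x) \leq r_i + d(x, A)$ follow from the decomposition. Hyperconvexity of $X$ gives a point $z$ in each of these balls. If $z \notin X_\mu$, the decomposition together with $d(z, x) \leq d(x, A)$ forces $z \in B_x$; if $z \in X_\mu$, an additional projection argument using weak external hyperconvexity of $A$ in $X_\mu$ replaces $z$ by a point of $B_x$ while preserving the $y_i$-constraints.

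For sufficiency, take a pairwise Helly family $\{B(p_j, r_j)\}_{j \in J}$ in $X$. A short argument using pairwise Helly and the decomposition produces a piece $X_\lam$ meeting every ball; fixing such $X_\lam$ and setting $J_\lam := \{j \in J : p_j \in X_\lam\}$, one has $r_j \geq d(p_j, A)$ for $j \notin J_\lam$, and the decomposition identifies $B(p_j, r_j) \cap X_\lam$ as the $\rho_j$-thickening of $B_{p_j}$ in $X_\lam$, with $\rho_j := r_j - d(p_j, A)$. A routine Helly argument then shows such thickenings of externally hyperconvex sets are themselves externally hyperconvex in $X_\lam$. I would solve the restricted problem in two steps: first select coordinated witnesses $b_j \in B_{p_j}$ (for $j \notin J_\lam$) using external hyperconvexity of the $B_{p_j}$'s; then apply hyperconvexity of $X_\lam$ to the augmented family $\{B(p_k, r_k)\}_{k \in J_\lam} \cup \{B(b_j, \rho_j)\}_{j \notin J_\lam}$ to obtain a common point in $X_\lam$.

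The moreover statement follows by the same template: given a pairwise Helly family $\{B(y_i, r_i)\}_{i \in \bw}$ in $X_\lam \cup \{x\}$ touching $X_\lam$, with $y_{i_0} = x$ having radius $r_{i_0} \geq d(x, A)$, the decomposition yields $d_\lam(y_i, B_x) \leq r_i + \rho$ where $\rho := r_{i_0} - d(x, A)$; external hyperconvexity of $B_x$ in $X_\lam$ (available by the equivalence just established) produces $b \in B_x$ with $d_\lam(b, y_i) \leq r_i + \rho$ for $i \neq i_0$, and then hyperconvexity of $X_\lam$ applied to $\{B(y_i, r_i)\}_{i \neq i_0} \cup \{B(b, \rho)\}$ yields $z \in X_\lam$ satisfying $d(z, x) \leq \rho + d(x, A) = r_{i_0}$ by the decomposition. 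The main obstacle will be the coordinated selection of the witnesses $b_j$ in the sufficiency step, as these must jointly satisfy pairwise Helly conditions inside $A$; establishing this is the delicate heart of the argument, requiring a careful interplay between the hypothesized external hyperconvexity of each $B_{p_j}$ and the weak external hyperconvexity of $A$ in the pieces containing the $p_j$'s.
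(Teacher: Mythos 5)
Your distance decomposition, your necessity argument, and your treatment of the ``moreover'' clause are all sound and essentially coincide with the paper's route (Lemmas~\ref{lem:distance} and \ref{lem:balls}, Proposition~\ref{prop:externally hyperconvex balls}, and the final paragraph of the proof of Proposition~\ref{prop:gluing along WEH}). The genuine gap is in the sufficiency direction, exactly where you flag it. Your reduction needs the traces $B(p_j,r_j)\cap X_\lam$ of all the balls on the distinguished piece $X_\lam$ to be pairwise intersecting (equivalently, your coordinated witnesses $b_j$ must exist). For two centers $p_j,p_{j'}$ lying in a common piece $X_\mu\neq X_\lam$ this is the crux of the whole theorem and is not a routine Helly manipulation: the pairwise Helly hypothesis only gives a point of $B(p_j,r_j)\cap B(p_{j'},r_{j'})$ somewhere in $X$, and that intersection may a priori be entirely contained in $X_\mu\sm A$, in which case it says nothing about the traces on $X_\lam$. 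Deferring this as ``the delicate heart of the argument'' leaves the main step unproved.

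For comparison, the paper closes this gap in Case~2, Steps~I and II of the proof of Proposition~\ref{prop:gluing along WEH}. One first locates $i_0,j_0$ with $x_{i_0},x_{j_0}\in X_{\lam_0}$ and $A':=B(x_{i_0},r_{i_0})\cap B(x_{j_0},r_{j_0})$ disjoint from $A$, sets $s:=d(A,A')$, and shows via Corollary~\ref{cor:admitting distance} and a connectedness argument that the enlarged intersection $B(x_{i_0},r_{i_0}+s)\cap B(x_{j_0},r_{j_0}+s)$ is still contained in $X_{\lam_0}$, so that its trace on any other piece $X_\mu$ lies inside $A$. Then, for $x_i,x_j\in X_\mu$, the four sets $B(x_{i_0},r_{i_0}+s)\cap X_\mu$, $B(x_{j_0},r_{j_0}+s)\cap X_\mu$, $B^{\mu}(x_i,r_i)$, $B^{\mu}(x_j,r_j)$ are pairwise intersecting and externally hyperconvex in $X_\mu$, so Proposition~\ref{prop:intersection of EH} yields a common point, which necessarily lies in $A\subset X_{\lam_0}$ and hence witnesses $B(x_i,r_i)\cap B(x_j,r_j)\cap X_{\lam_0}\neq\es$. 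Note also that once pairwise intersection of the traces is in hand, your two-step witness selection is superfluous: the traces are externally hyperconvex in $X_\lam$ by your own thickening remark, so Proposition~\ref{prop:intersection of EH} applies to them directly and finishes the proof.
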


We go on by showing that the above sufficient condition for hyperconvexity of the gluing is necessary in case isometric copies are glued together, namely 

\begin{Thm}\label{thm:gluing of copies}
	Let $X$ be a hyperconvex metric space and let $A$ be a subset. Then $X \sqcup_A X$ is hyperconvex if and only if $A$ is weakly externally hyperconvex in $X$ and for every $x \in X$, the intersection $B(x,d(x,A))\cap A$ is externally hyperconvex in $X$.
\end{Thm}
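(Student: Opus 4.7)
The strategy is to handle the two implications separately.

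The ``if'' direction is essentially immediate from Theorem~\ref{thm:gluing along WEH}. Applying it with $\Lambda = \{1,2\}$ and $X_1 = X_2 = X$, the assumption that $A$ is weakly externally hyperconvex in $X$ supplies its hypothesis, while the assumption that $B(x,d(x,A))\cap A$ is externally hyperconvex in $X$ for every $x \in X$ matches its characterizing condition---the two copies together making $x \in X \setminus X_\lambda$ exhaust the points of $X$---so the gluing is hyperconvex.

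For the ``only if'' direction, assume $X\sqcup_A X$ is hyperconvex. I plan first to show that $A$ is weakly externally hyperconvex in $X$ and then to deduce the intersection condition from Theorem~\ref{thm:gluing along WEH}, whose hypotheses would then be in place.

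The core step is verifying weak external hyperconvexity. Let $\sigma$ denote the canonical involution of $X \sqcup_A X$ swapping the two copies and fixing $A$ pointwise. Given $x\in X$ and a test family $\{B(x_i,r_i)\}_{i\in I}$ of balls in $A \cup \{x\}$ satisfying $d(x_i,x_j)\leq r_i+r_j$ and $d(x_i,A)\leq r_i$, I would lift the configuration to the gluing: balls centered in $A$ remain unchanged (they are $\sigma$-invariant), and any ball $B(x,r_*)$ centered at $x \notin A$ is paired with its mirror $B(\sigma(x),r_*)$. Pairwise distances transfer, using in particular $d(x,\sigma(x)) = 2d(x,A) \leq 2r_*$. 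Hyperconvexity of the gluing yields a common point $y$, and the $\sigma$-invariance of the enlarged family forces $\sigma(y)$ to be a common point as well. Adjoining the two ``midpoint balls'' $B(y,d(y,\sigma(y))/2)$ and $B(\sigma(y),d(y,\sigma(y))/2)$---compatibility is automatic since $y,\sigma(y)$ already lie in the intersection---a second application of hyperconvexity produces a point $m$ which, by the triangle inequality, must be a midpoint of $y$ and $\sigma(y)$.

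The crux is then to show $m \in A$. The gluing-distance formula gives
\[
d(\sigma(y),m) \;=\; \inf_{a\in A}\bigl[d(y,a)+d(m,a)\bigr] \;\geq\; d(y,A) + d(m,A),
\]
whereas $d(y,\sigma(y)) = 2d(y,A)$ together with the midpoint property yields $d(\sigma(y),m) = d(y,A)$. Combining these forces $d(m,A) = 0$, hence $m \in A$. This $m$ is the desired common point in $A \cap \bigcap_i B(x_i,r_i)$, completing the weak external hyperconvexity check; the intersection condition then follows from the necessity part of Theorem~\ref{thm:gluing along WEH}. The main obstacle is precisely spotting this symmetry-plus-midpoint construction for the necessity direction: it is what converts the hyperconvex ball-intersection property of the gluing into a point of $A$ inside a single copy of $X$.
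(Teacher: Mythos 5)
Your proposal is correct and follows essentially the same route as the paper: sufficiency is delegated to Theorem~\ref{thm:gluing along WEH}, and necessity of weak external hyperconvexity is obtained by symmetrizing the ball family under the swap involution and locating a point of $A$ "between" the resulting pair $y,\sigma(y)$. The only cosmetic differences are that the paper (Proposition~\ref{prop:gluing copies}) finds that point as the crossing of a geodesic $[y,\sigma(y)]$ inside the hyperconvex admissible intersection rather than via your two extra midpoint balls, and it verifies the second condition directly by noting $B(x,d(x,A))\cap A = B(x,d(x,A))\cap B(x',d(x,A)) \in \mathcal{A}(X\sqcup_A X)$ instead of invoking the necessity half of Theorem~\ref{thm:gluing along WEH} (which the paper proves as Proposition~\ref{prop:externally hyperconvex balls} anyway).
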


In particular, Theorem \ref{thm:gluing of copies} shows that the conditions imposed in Theorem \ref{thm:gluing along WEH} are natural. 

In order to be able to apply Theorems~\ref{thm:gluing along WEH} and \ref{thm:gluing of copies} to construct concrete examples of injective metric spaces by gluing, we turn our attention to weakly externally hyperconvex subsets of $l_{\infty}(I)$. A subset $Y$ of a metric space $X$ is called \textit{proximinal} if for any $x \in X$, we have $B(x,d(x,Y)) \cap Y \neq \emptyset$. It is important to note that if $Y$ is weakly exernally hyperconvex in $X$, then it is necessarily proximinal.

\begin{Thm} \label{Thm:CellsInjectiveHullWEH}
Let $I \ne \es$ be any index set. Suppose that $Q$ is a non-empty proximinal subset
of $l_\infty(I)$ given by an arbitrary system of inequalities
of the form $\sig x_i \le C$ or $\sig x_i + \tau x_j \le C$
with $\sig,\tau \in \{ \pm 1 \} $ and $C \in \R$. Then $Q \in \mathcal{W}(l_\infty(I))$.
\end{Thm}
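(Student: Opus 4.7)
Fix $x_0 \in l_\infty(I)$ and a family $\{B(x_\alpha, r_\alpha)\}_{\alpha \in \Lambda}$ of closed balls with $x_\alpha \in Q \cup \{x_0\}$, pairwise overlapping in $l_\infty(I)$, and $d(x_\alpha, Q) \le r_\alpha$; my goal is to produce $y \in Q$ contained in every $B(x_\alpha, r_\alpha)$. Since closed balls in $l_\infty(I)$ are products of intervals, $\bigcap_\alpha B(x_\alpha, r_\alpha)$ equals the box $\prod_{i\in I}[L_i, U_i]$ for $L_i := \sup_\alpha ((x_\alpha)_i - r_\alpha)$ and $U_i := \inf_\alpha ((x_\alpha)_i + r_\alpha)$, with pairwise overlap being exactly the condition $L_i \le U_i$ for every $i$. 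Adjoining the two inequalities $z_i \le U_i$ and $-z_i \le -L_i$ (for each $i$) to the defining system of $Q$ yields a larger system still of the same syntactic form (single-coordinate $\sigma z_i \le C$ or two-coordinate $\sigma z_i + \tau z_j \le C$), and the task reduces to showing this combined system has a solution.

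The decisive observation is that the class of such inequalities is closed under Fourier--Motzkin elimination in one variable: eliminating $z_k$ between $\sigma_1 z_i + \tau_1 z_k \le C_1$ and $\sigma_2 z_j + \tau_2 z_k \le C_2$ with $\tau_1 = -\tau_2$ yields $\sigma_1 z_i + \sigma_2 z_j \le C_1 + C_2$, again of the allowed form, with the degenerate cases $i = j$ or one/both inequalities single-coordinate producing (after rescaling by $2$) inequalities of the allowed form as well. Let $\mathcal{S}$ denote the closure of the combined system under this operation; then $\mathcal{S}$ defines the same feasible set. The hypothesis $d(x_\alpha, Q) \le r_\alpha$ provides, for each $\alpha$, a point $w^\alpha \in Q \cap B(x_\alpha, r_\alpha)$ satisfying every inequality in $\mathcal{S}$, whence the refined intervals $[L_i', U_i']$ obtained by collecting all single-coordinate inequalities of $\mathcal{S}$ on $z_i$ satisfy $L_i' \le U_i'$ for every $i$.

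Next, I apply Zorn's lemma on the poset of partial assignments $y \colon J \to \R$ (with $J \subseteq I$) satisfying $y_i \in [L_i', U_i']$ for $i \in J$ together with every two-coordinate inequality of $\mathcal{S}$ whose indices both lie in $J$. Non-emptiness and chain-closure are immediate. For a maximal element $(J^*, y^*)$ with $k \in I \setminus J^*$, the candidate values for $y_k$ are constrained by the two-coordinate inequalities of $\mathcal{S}$ pairing $k$ with indices in $J^*$; any apparent conflict between a resulting lower and upper bound on $y_k$ can, by eliminating $z_k$ as above, be traced back to an inequality of $\mathcal{S}$ involving only indices in $J^*$, which $y^*$ already satisfies. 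Hence $y^*$ extends to $k$, contradicting maximality; so $J^* = I$ and $y^*$ is the desired point in $Q$.

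The main obstacle is formulating the closure $\mathcal{S}$ in the second step correctly: one must verify that every relevant combination of inequalities (two $Q$-inequalities, a $Q$-inequality with a ball bound, or two ball bounds) reduces after rescaling to an inequality with $\pm 1$ coefficients in at most two variables, so that the closure remains inside the class. Once this bookkeeping is settled and the refined single-coordinate bounds are seen to be consistent, the final Zorn extension argument runs without further difficulty.
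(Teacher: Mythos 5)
Your overall strategy -- reduce to feasibility of a system of ``unit two-variable'' inequalities, close it under Fourier--Motzkin elimination, and build a solution coordinate-by-coordinate via Zorn's lemma -- is genuinely different from the paper's proof, which adapts Lang's Lipschitz-extension argument (translate $Q$ so that $0 \in Q \cap B(y,d_\infty(y,Q))$, making all constants non-negative, and apply a componentwise combination of the smallest and largest $1$-Lipschitz extensions). The skeleton is workable, and your observation that the class of inequalities $\sigma z_i + \tau z_j \le C$ is closed under elimination is correct, as is the final Zorn step \emph{granted} that the closure $\mathcal{S}$ is consistent. But that consistency is exactly the mathematical content of the theorem, and your justification for it is false: a point $w^\alpha \in Q \cap B(x_\alpha,r_\alpha)$ satisfies the $Q$-inequalities and the ball inequalities for the index $\alpha$ only; it has no reason to satisfy the ball inequalities for other indices $\beta$, hence no reason to satisfy ``every inequality in $\mathcal{S}$.'' No single witness point is available at that stage -- producing one is precisely what the proof must accomplish.

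The gap is not merely a misstatement, because your consistency argument uses only the hypotheses ``$d(x_\alpha,Q)\le r_\alpha$ for all $\alpha$'' and pairwise intersection, never the crucial fact that all centers except the single point $x_0$ lie \emph{in} $Q$. If consistency followed from those hypotheses alone, the same argument would prove that $Q$ is externally hyperconvex, which is false: for the half-space $Q=\{y \in l_\infty^2 : -y_1+y_2 \le -2\}$ (Example~\ref{ex:intersection of WEH} of the paper) and the balls $B((1,1),1)$, $B((-1,-1),1)$, one has $d(x_\alpha,Q)\le r_\alpha$ and pairwise intersection, yet the box is $\{(0,0)\}$ and eliminating $z_2$ from $-z_1+z_2\le -2$ and $-z_2\le 0$, then $z_1$ against $z_1 \le 0$, derives the contradiction $0 \le -2$. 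So the closure $\mathcal{S}$ can be inconsistent under exactly the hypotheses you invoke. To repair the proof you must isolate the one center $x_0 \notin Q$, use proximinality to place a point of $Q \cap B(x_0,d_\infty(x_0,Q))$ at the origin (so all constants $C$ become non-negative), and then verify consistency of every derived inequality by a case analysis on the signs of the accumulated bounds -- which is, in effect, what the paper's extension operator does; also note that a derived contradictory constant inequality $0 \le C$ with $C<0$ must be excluded separately from the emptiness of the intervals $[L_i',U_i']$, and you do not address it.
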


This has consequences on the structure of Isbell's injective hull $\E X$; see Remark~\ref{RemarkInjectiveHull}. 
In the finite dimensional case, we moreover obtain a characterization of weakly externally hyperconvex subspaces. In the following, $l_{\infty}^n$ denotes the vector space $\R^n$ endowed with the supremum norm. Moreover, for any metric space $(X,d)$, a non-empty subset $S \subset X$ is said to be \textit{strongly convex} if for every $x,y \in S$, one has $I(x,y):= \{ z \in X : d(x,z) + d(z,y) = d(x,y)\} \subset S$.

In Theorem~\ref{Thm:FiniteDimensionalWEHLinearSubspaces}, we characterize weakly externally hyperconvex subspaces of $l_{\infty}^n$. We show that they correspond exactly to the linear subspace that can be written as the intersection of hyperplanes having as normal vector a vector of the form $\sigma e_i$ or $ \sigma e_i + \tau e_j$ where $\sigma, \tau \in \{\pm 1\}$ and $i,j \in \{1, \cdots , n \} $. Now, a \textit{convex polyhedron} in a finite dimensional normed linear space is a finite intersection of closed half-spaces. Using Proposition~\ref{Prop:Cuboids} and Theorem~\ref{Thm:FiniteDimensionalWEHConvexPolyhedra}, we obtain
\begin{Thm}
Suppose that $Q$ is a convex polyhedron in $l_{\infty}^n$ with non-empty interior.
Then, the following hold:
\begin{enumerate}[(i)]
\item $Q$ is externally hyperconvex in $l_{\infty}^n$ if and only if $Q$ is a cuboid.
\item $Q$ is weakly externally hyperconvex in $l_{\infty}^n$ if and only if $Q$ is given by a finite system of inequalities of the form $\sig x_i \le C$ or $\sig x_i + \tau x_j \le C$ with $\sig,\tau \in \{ \pm 1 \} $  and $C \in \R$.
\end{enumerate}
\end{Thm}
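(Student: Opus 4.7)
The plan is to assemble the previously announced results. Part (ii) follows essentially for free from Theorem~\ref{Thm:CellsInjectiveHullWEH} and Theorem~\ref{Thm:FiniteDimensionalWEHConvexPolyhedra}, and part (i) then reduces to (ii) together with a local obstruction argument at any would-be two-variable facet.

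For the ``if'' direction of (ii), I would first check that the hypotheses of Theorem~\ref{Thm:CellsInjectiveHullWEH} apply. A non-empty convex polyhedron $Q$ defined by such a finite system of inequalities is closed, and in the finite-dimensional space $l_\infty^n$ every non-empty closed convex set is proximinal: for any $x$ the continuous function $y \mapsto \|x-y\|_\infty$ attains its minimum on the compact set $Q \cap \ol{B}(x,r)$ for $r$ large enough. Hence $Q$ is proximinal, and Theorem~\ref{Thm:CellsInjectiveHullWEH} applies with $I = \{1,\dots,n\}$, giving that $Q$ lies in $\mathcal{W}(l_\infty^n)$. The ``only if'' direction is exactly the content of Theorem~\ref{Thm:FiniteDimensionalWEHConvexPolyhedra}, which I would simply invoke.

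For (i), the ``if'' direction is Proposition~\ref{Prop:Cuboids}. For the converse, externally hyperconvex implies weakly externally hyperconvex, so by (ii) the set $Q$ is given by a finite system of inequalities of the allowed form. If $Q$ is not a cuboid, then in any non-redundant representation at least one defining inequality must involve two distinct coordinates, say $\sigma x_i + \tau x_j \le C$ with $i \ne j$ and $\sigma,\tau \in \{\pm 1\}$, and this inequality supports an honest facet $F$ of $Q$. Fix $p$ in the relative interior of $F$, so that every other defining inequality is strict on some neighborhood $U$ of $p$; after translation and reflection of coordinates assume $p = 0$, $i = 1$, $j = 2$, $\sigma = \tau = 1$, $C = 0$, so that $Q$ coincides with $\{x_1 + x_2 \le 0\}$ on $U$. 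For $\eps > 0$ small enough that the centers $\eps e_1, \eps e_2$ and all relevant nearest points lie in $U$, consider the two balls $B_1 = B(\eps e_1, \eps/2)$ and $B_2 = B(\eps e_2, \eps/2)$. One checks that $d(\eps e_1, \eps e_2) = \eps \le \eps/2 + \eps/2$ and $d(\eps e_k, Q) = \eps/2$ for $k = 1,2$, yet every point of $B_1 \cap B_2$ has $x_1 = \eps/2 = x_2$, hence violates $x_1 + x_2 \le 0$. Therefore $Q \cap B_1 \cap B_2 = \es$, contradicting external hyperconvexity.

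The only delicate step is the localization at the end: one must pick $\eps$ small enough that the nearest-point computations on $Q$ are actually governed only by the diagonal constraint and not by any other facet or by global geometry. Beyond this localization, the proof is straightforward bookkeeping on top of Theorem~\ref{Thm:CellsInjectiveHullWEH} and Theorem~\ref{Thm:FiniteDimensionalWEHConvexPolyhedra}.
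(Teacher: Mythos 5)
Your proof is correct, and part (ii) as well as the ``if'' direction of (i) follow the paper exactly (part (ii) is precisely Theorem~\ref{Thm:FiniteDimensionalWEHConvexPolyhedra}, whose easy direction is Theorem~\ref{Thm:CellsInjectiveHullWEH} plus the automatic proximinality of closed sets in the proper space $l_\infty^n$). Where you diverge is the converse of (i): the paper simply invokes Proposition~\ref{Prop:Cuboids}, which proves directly that \emph{every} externally hyperconvex subset of $l_\infty^n$ (polyhedral or not) is a cuboid, by placing $2n$ balls whose intersection is a prescribed single point $z$ of the coordinate-interval hull of two points of $A$ and concluding $z \in A$. You instead route through part (ii) to learn that all facet normals of $Q$ are of the form $\sigma e_i$ or $\sigma e_i+\tau e_j$, and then kill any diagonal facet with a local two-ball obstruction ($B(\eps e_1,\eps/2)$ and $B(\eps e_2,\eps/2)$ meet in the slab $\{x_1=x_2=\eps/2\}$, which lies strictly outside $\{x_1+x_2\le 0\}$); the computation and the localization at a relative-interior point of the facet both check out. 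The trade-off is that your argument is restricted to full-dimensional polyhedra and leans on the much heavier machinery behind (ii) (Propositions~\ref{Prop:IncreasingSequenceWEH} and \ref{prop:IntersectionWEH} and Theorem~\ref{Thm:FiniteDimensionalWEHLinearSubspaces}), whereas the paper's Proposition~\ref{Prop:Cuboids} is elementary, self-contained, and strictly more general; on the other hand, your facet obstruction cleanly exhibits \emph{why} a diagonal constraint is incompatible with external hyperconvexity, which the paper's proof leaves implicit.
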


Finally, we use Theorem~\ref{Thm:FiniteDimensionalWEHLinearSubspaces} to give an explicit characterization for the gluing of $l^n_{\infty}$ and $l^m_{\infty}$ along a linear subspace to be hyperconvex:

\begin{Thm}\label{Thm:CharacterizationGluingLInfinity}
Let $X_1=l_\infty^n$ and $X_2=l_\infty^m$. Moreover let $V$ be a linear subspace of both $X_1$ and $X_2$ such that $V \neq X_1,X_2$. Then
\[
X = X_1 {\sqcup}_V X_2
\]
is hyperconvex if and only if there is some $k$ such that $X_1=l_\infty^k \times X_1'$, $X_2=l_\infty^k \times X_2'$,  $V=l_\infty^k \times V'$ and $V'$ is strongly convex in both $X_1'=l_\infty^{n-k}$ and $X_2'=l_\infty^{m-k}$.
\end{Thm}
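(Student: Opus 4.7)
My plan is to apply the gluing criterion of Theorem~\ref{thm:gluing along WEH}, which says $X = X_1 \sqcup_V X_2$ is hyperconvex if and only if (a) $V$ is weakly externally hyperconvex in each $X_i$, and (b) for every $x \in X_j \setminus V$ with $\{i,j\} = \{1,2\}$, the projection set $B(x, d(x, V)) \cap V$ is externally hyperconvex in $X_i$. The theorem will follow once these two conditions are translated via Theorem~\ref{Thm:FiniteDimensionalWEHLinearSubspaces} into the claimed coordinate product decomposition.

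For the ``if'' direction, assume the stated product structure. Two product-style facts drive the proof: first, strongly convex subsets of hyperconvex spaces are externally hyperconvex, so $V'$ is externally hyperconvex in each $X_i'$; second, the $l_\infty$-product of externally hyperconvex subsets of an $l_\infty$-product of hyperconvex spaces is externally hyperconvex (by coordinatewise application of the one-dimensional Helly property for intervals). Combined, these give that $V = l_\infty^k \times V'$ is externally hyperconvex in each $X_i$, establishing (a). For (b), a direct calculation in the product metric shows that for $x = (y, z) \in X_j = l_\infty^k \times X_j'$,
\[
B(x, d(x, V)) \cap V = B(y, r) \times \bigl( B(z, r) \cap V' \bigr), \quad r := d(z, V').
\]
The first factor is a cuboid in $l_\infty^k$ (externally hyperconvex), and the second factor is the metric projection of $z$ onto the strongly convex $V'$, itself strongly convex and hence externally hyperconvex in $X_i'$. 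A second application of the product principle then yields (b).

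For the ``only if'' direction, assume $X$ is hyperconvex, so (a) and (b) hold. By Theorem~\ref{Thm:FiniteDimensionalWEHLinearSubspaces}, in each embedding $V \subset X_i$ is cut out by linear equations of the form $x_\alpha = 0$ or $\sig x_\alpha + \tau x_\beta = 0$. Define $J_i := \{\alpha : \R e_\alpha \subset V \text{ in } X_i\}$; then no minimal defining equation can involve a coordinate in $J_i$ (otherwise $e_\alpha$ would be expelled from $V$), yielding canonical coordinate splittings $X_i = l_\infty^{J_i} \times X_i'$ and $V = l_\infty^{J_i} \times V_i'$, with $V_i' \subset X_i'$ a weakly externally hyperconvex subspace containing no coordinate axis of $X_i'$. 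It then remains to prove (i) $|J_1| = |J_2| =: k$, (ii) $V_1' = V_2'$ as a common subspace $V'$, and (iii) $V'$ is strongly convex in each $X_i'$. For (iii) I argue by contrapositive: if $V'$ fails strong convexity in $X_i'$, one chooses $y_1, y_2 \in V'$ and $z \in I(y_1, y_2) \setminus V'$, uses this to construct $x \in X_j$ so that the projection set $B(x, d(x, V)) \cap V$ is a ``diagonal segment'' in $V$, and then produces two balls in $X_i$ pairwise overlapping and each touching the segment yet with empty common intersection with it, exactly as with the segment $\{(0, a, a) : a \in [0,2]\} \subset l_\infty^3$; this contradicts external hyperconvexity of the projection set and hence (b). The compatibility (i)--(ii) is then extracted from the canonical, intrinsic nature of the $l_\infty$-splittings combined with the strong-convexity analysis.

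The main obstacle is the ``only if'' direction, specifically showing that the two canonical splittings from the two embeddings are mutually compatible (the common $k$ and common $V'$) and that failure of strong convexity of $V'$ manufactures a genuine pair of balls violating condition (b). The driving geometric dichotomy is that diagonal-type segments in $l_\infty^N$ are not externally hyperconvex while cuboids are, which matches exactly the non-strongly-convex vs.\ strongly-convex dichotomy for $V'$.
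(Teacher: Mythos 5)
There is a genuine gap, and it sits at the heart of the ``only if'' direction: you write ``assume $X$ is hyperconvex, so (a) and (b) hold,'' but Theorem~\ref{thm:gluing along WEH} does not deliver (a). In that theorem, weak external hyperconvexity of the gluing set in each piece is a \emph{hypothesis}, not a conclusion; the stated equivalence is only between hyperconvexity of $X$ and condition (b), and the paper explicitly notes (and illustrates with the graph of a $1$-Lipschitz function) that a gluing can be hyperconvex along a set that is not weakly externally hyperconvex. Theorem~\ref{thm:gluing of copies} would give (a), but only for two copies of the same space, which is not the situation here since $n \neq m$ in general. Proving that hyperconvexity of $l_\infty^n \sqcup_V l_\infty^m$ forces $V \in \mathcal{W}(X_i)$ is precisely the technical core of the paper's argument (Lemmas~\ref{Lem:lem1}--\ref{Lem:lem3}: one first shows that $B(x,r) \cap X_2$ is a cuboid for $x \in X_1$ and $r > d(x,V)$, and then runs a vertex/diagonal analysis to verify both conditions of Lemma~\ref{lem:characterization WEH}). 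Your proposal contains no substitute for this step, and without it the rest of the ``only if'' direction cannot start.

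Two further problems. In the ``if'' direction, your opening claim that strongly convex subsets of hyperconvex spaces are externally hyperconvex is false: the diagonal $\{(t,t) : t \in \R\} \subset l_\infty^2$ is strongly convex (and weakly externally hyperconvex) but is not a cuboid, hence not externally hyperconvex by Proposition~\ref{Prop:Cuboids} --- concretely, $B((2,0),1)$ and $B((0,-2),1)$ each meet the diagonal and each other, yet their intersection $\{(1,-1)\}$ misses it. What is true, and what the paper uses, is that the metric projection onto a strongly convex set is a single point (the gate), so condition (b) holds and Theorem~\ref{thm:gluing along WEH} applies to $X_1' \sqcup_{V'} X_2'$; the factor $l_\infty^k$ is then restored by Lemma~\ref{Lem:Products}. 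Second, the compatibility claims (i)--(ii) for your two coordinate splittings are asserted rather than proved; the paper's argument that $\R e_i \subset V$ forces $\R e_i = \R f_j$ for some $j$ (via admissibility of $[-r,r]e_i$ in $X$ together with Proposition~\ref{Prop:Cuboids}) is a genuine step that must be supplied. Your contrapositive for (iii) is in the right spirit --- it matches the paper's observation that a projection set with more than one nontrivial strongly convex factor is a diagonal segment and hence not externally hyperconvex --- but it cannot be run until (a) is established.
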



\section{Externally and Weakly Externally Hyperconvex Subspaces} 


	In this section we collect a bench of results for externally and weakly externally hyperconvex metric subspaces, their neighborhoods and intersections.

	First we fix some notation. Let $(X,d)$ be a metric space. We denote by
\begin{equation*}
	B(x_0,r) = \{x \in X : d(x,x_0) \leq r \}
\end{equation*}
the closed ball of radius $r$ with center in $x_0$. For any subset $A \subset X$ let
\begin{equation*}
	B(A,r)= \{x \in X : d(x,A):= \inf_{y \in A} d(x,y) \leq r \}
\end{equation*}
be the closed $r$-neighborhood of $A$.

	We call a non-empty subset of a metric space \emph{admissible} if it can be written as an intersection of balls $A = \bigcap_i B(x_i,r_i)$. Furthermore, we denote by $\mathcal{A}(X), \mathcal{E}(X), \mathcal{W}(X)$ and $\mathcal{H}(X)$ the collection of all admissible, externally hyperconvex, weakly externally hyperconvex and hyperconvex subsets of $X$.
	We always have $\mathcal{E}(X) \subset \mathcal{W}(X)\subset \mathcal{H}(X)$. Moreover, it holds that $\mathcal{A}(X) \subset \mathcal{E}(X)$ if and only if $X$ is hyperconvex.

Externally hyperconvex subspaces have the following important intersection property proven in \cite{Mie}:

\begin{Prop}\label{prop:intersection of EH}\cite[Proposition~1.2]{Mie}.
	Let $(X,d)$ be a hyperconvex space and $\{A_i\}_{i \in I}$ a family of pairwise intersecting externally hyperconvex subsets such that one of them is bounded. Then $\bigcap_{i \in I} A_i$ is nonempty and externally hyperconvex.
\end{Prop}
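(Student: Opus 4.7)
The plan is to prove the result in three stages: a binary intersection lemma, the finite Helly-type case, and a transfinite argument for arbitrary index sets. First I would establish the binary lemma: if $A_1, A_2 \in \mathcal{E}(X)$ intersect, then $A_1 \cap A_2 \in \mathcal{E}(X)$. Given a test family $\{B(x_k, r_k)\}_{k \in K}$ with compatibility conditions $d(x_k, x_l) \le r_k + r_l$ and $d(x_k, A_1 \cap A_2) \le r_k$, I would apply external hyperconvexity of $A_1$ to the unmodified family (allowed since $d(x_k, A_1) \le d(x_k, A_1 \cap A_2) \le r_k$) to produce $y_0 \in A_1 \cap \bigcap_k B(x_k, r_k)$. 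I would then iterate by augmenting the family with the ball $B(y_n, d(y_n, A_{i(n+1)}))$ and invoking external hyperconvexity of $A_{i(n+1)}$, with $i(\cdot)$ alternating between $1$ and $2$; the compatibility checks are straightforward triangle inequalities, and by construction $d(y_{n+1}, A_{i(n+2)}) \le d(y_n, A_{i(n+1)})$. The subtle point is that $\{y_n\}$ should converge to a point of $A_1 \cap A_2 \cap \bigcap_k B(x_k, r_k)$; since the iterates are trapped in the bounded admissible set $\bigcap_k B(x_k, r_k)$, a transfinite extension coupled with a Cantor-style compactness argument in the hyperconvex space $X$ should force the distances to $A_1$ and $A_2$ to vanish at some ordinal.

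Once the binary lemma holds, the finite Helly-type case follows by induction, with the three-set situation being the real content. Given pairwise intersecting $A_1, A_2, A_3 \in \mathcal{E}(X)$ with witnesses $a_{ij} \in A_i \cap A_j$, the binary lemma gives $A_1 \cap A_2 \in \mathcal{E}(X)$, and one must show $(A_1 \cap A_2) \cap A_3 \neq \es$. I would invoke external hyperconvexity of $A_3$ on a test family of balls centered at the $a_{ij}$ with radii calibrated to the mutual distances $d(a_{ij}, A_k)$, producing a point in $A_3$ which can then be driven into $A_1 \cap A_2$ by the binary-lemma iteration. Induction then handles any finite pairwise intersecting subfamily, the intersection remaining externally hyperconvex via repeated application of the binary lemma.

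For arbitrary $I$, I would proceed by transfinite induction anchored at $A_{i_0}$. Well-order $I = \{i_\alpha\}$ with $i_0$ first, and define $B_\alpha = \bigcap_{\beta \le \alpha} A_{i_\beta}$. Successor stages invoke the binary lemma (after establishing $B_\alpha \cap A_{i_{\alpha+1}} \neq \es$ via the finite Helly result for finite $\alpha$, or via a compactness argument for infinite $\alpha$); limit stages require a Cantor-type compactness input, which in the hyperconvex setting stems from the boundedness of $B_\alpha \subset A_{i_0}$ together with the fact that each $B_\beta$ is a nonexpansive retract of $X$ (being hyperconvex in hyperconvex $X$). External hyperconvexity of the total intersection $\bigcap_{i \in I} A_i$ follows by applying the same principle to an arbitrary test family of balls combined with the finite-case result. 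I expect the principal obstacles to be the convergence / transfinite stabilization inside the binary lemma and the limit-stage compactness in the overall transfinite construction; the ball-augmentation strategy underlying both stages should go through once the correct families are identified.
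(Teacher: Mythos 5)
There is a genuine gap, and it sits at the foundation of your plan, in the binary lemma. (Note also that the paper does not prove this proposition at all --- it is imported from \cite[Proposition~1.2]{Mie} --- so the closest in-paper material to compare with is the machinery of Lemma~\ref{lem:intersection of WEH} and Proposition~\ref{prop:intersection of WEH}, which prove the mixed $\mathcal{E}/\mathcal{W}$ analogue.) Your iteration adjoins $B(y_n,d(y_n,A_{i(n+1)}))$ and takes $y_{n+1}$ to be \emph{some} point of $A_{i(n+1)}$ in the resulting intersection. External hyperconvexity is only an existence statement, so the argument must survive an adversarial choice of $y_{n+1}$; all it then yields is that the distances $d(y_n,A_{i(n+1)})$ are non-increasing, which is exactly what you observe --- but they need not tend to $0$, the sequence $(y_n)$ need not converge, and no accumulation point need lie in $A_1\cap A_2$. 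Concretely, take $X=l_\infty^2$, $A_1=\R\times\{0\}$, $A_2=\{0\}\times\R$ (both cuboids, hence in $\mathcal{E}(l_\infty^2)$ by Proposition~\ref{Prop:Cuboids}, intersecting at the origin) and the single test ball $B(0,100)$. The choices $y_0=(50,0)$, $y_1=(0,50)$, $y_2=(50,0),\dots$ are admissible at every stage (even if you keep all previously adjoined balls), the relevant distances remain equal to $50$ forever, and the two accumulation points lie outside $A_1\cap A_2$. So the mechanism fails even in a compact ambient space; a fortiori the ``Cantor-style compactness'' you invoke here and at the limit stages of your transfinite induction is not available in general hyperconvex spaces such as $l_\infty(I)$, where closed bounded sets are far from compact. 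The same objection undermines your treatment of arbitrary index sets.

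The repair is precisely what the paper does in Lemma~\ref{lem:intersection of WEH} and Proposition~\ref{prop:intersection of WEH}: do not project to nearest points. Instead fix $s>0$, set $d=d(y_0,\bigcap_k B(x_k,r_k))$, work with the \emph{inflated} constraint balls $B(x_k,r_k+d-ns)$, and adjoin at each step a ball $B(y_{n-1},s)$ of \emph{prescribed} radius $s$; external hyperconvexity guarantees the construction survives for $n\le\lfloor d/s\rfloor$ steps, after which one obtains points of $A_1$ and of $A_2$ inside $\bigcap_k B(x_k,r_k)$ at distance at most $s$ from each other. Running this for $s=2^{-n}$ with an additional pinning ball $B(y_n,2^{-(n+1)})$ forces the sequences to be Cauchy by fiat, and their common limit lies in $A_1\cap A_2\cap\bigcap_k B(x_k,r_k)$. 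The extension to arbitrary pairwise intersecting families (where the boundedness of one member and the Helly-type step genuinely enter) should likewise be organized around this uniform $\eps$-approximation argument applied to the whole family at once, rather than around a transfinite induction whose limit stages require a compactness input you do not have.
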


\begin{Lem}\label{lem:properties nbhds}
Let $X$ be a hyperconvex metric space. 
\begin{enumerate}[$(i)$]
	\item Let $A = \bigcap_{i \in I} B(x_i,r_i) \in \mathcal{A}(X)$ and $s \ge 0$. Then one has
	\[
		B(A,s) = \bigcap_{i \in I} B(x_i,r_i+s)\in \mathcal{A}(X).
	\]\label{it:properties nbhds i}
	\item For $A \in \mathcal{E}(X)$ and $s \geq 0$ one has $B(A,s) \in \mathcal{E}(X)$.
\end{enumerate}
\end{Lem}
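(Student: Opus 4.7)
The plan is to prove part $(i)$ first and then use a similar two-step intersection argument for part $(ii)$.

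For $(i)$, the inclusion $B(A,s) \subset \bigcap_{i \in I} B(x_i,r_i+s)$ is immediate from the triangle inequality: if $d(x,A) \leq s$, picking $y_n \in A$ with $d(x,y_n) \to d(x,A)$ and using $d(y_n, x_i) \leq r_i$ yields $d(x, x_i) \leq r_i + s$ in the limit. For the reverse inclusion, suppose $x \in \bigcap_{i \in I} B(x_i, r_i+s)$. I would consider the enlarged family of balls $\{B(x_i, r_i)\}_{i \in I} \cup \{B(x,s)\}$. Since $A \neq \emptyset$, any $y_0 \in A$ shows $d(x_i, x_j) \leq d(x_i, y_0) + d(y_0, x_j) \leq r_i + r_j$, and by hypothesis $d(x_i, x) \leq r_i + s$. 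Hyperconvexity of $X$ then produces a common point $y$, which lies in $A$ and satisfies $d(x,y) \leq s$, so $x \in B(A,s)$. This simultaneously shows the equality of sets and that $B(A,s)$ is admissible.

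For $(ii)$, let $A \in \mathcal{E}(X)$ and fix a family $\{B(y_j, t_j)\}_{j \in J}$ with $d(y_j, y_k) \leq t_j + t_k$ and $d(y_j, B(A,s)) \leq t_j$. The first observation is that for any $w \in B(A,s)$ one has $d(y_j, A) \leq d(y_j, w) + d(w, A) \leq d(y_j, w) + s$; taking the infimum over $w \in B(A,s)$ gives $d(y_j, A) \leq t_j + s$. Applying external hyperconvexity of $A$ to the enlarged family $\{B(y_j, t_j + s)\}_{j \in J}$, which still satisfies the pairwise condition, yields a point $z \in A$ with $d(z, y_j) \leq t_j + s$ for all $j \in J$.

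The final step is to find a point of $B(A,s)$ in $\bigcap_{j \in J} B(y_j, t_j)$. I would consider the family $\{B(y_j, t_j)\}_{j \in J} \cup \{B(z,s)\}$: the pairwise conditions hold by assumption and by the previous step, so hyperconvexity of $X$ delivers a common point $w$. Since $d(w, z) \leq s$ and $z \in A$, we have $w \in B(A,s)$, completing the proof. The main conceptual point — and the only place where one has to think rather than compute — is recognizing that external hyperconvexity of $B(A,s)$ requires a two-stage argument: external hyperconvexity of $A$ produces a nearby point of $A$, and a second application of hyperconvexity of $X$ then descends from $A$ back into $B(A,s)$ while respecting the original radii $t_j$.
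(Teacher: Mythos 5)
Your proof is correct: part $(i)$ follows from hyperconvexity applied to the family $\{B(x_i,r_i)\}_{i\in I}\cup\{B(x,s)\}$, and part $(ii)$ from the two-stage argument (external hyperconvexity of $A$ with radii $t_j+s$, then hyperconvexity of $X$ with the ball $B(z,s)$ adjoined), and every pairwise distance condition you need is verified. The paper states this lemma without proof, and your argument is precisely the standard one the authors rely on (cf.\ the analogous reasoning in \cite{Mie}), so there is nothing to add.
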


\begin{Lem}\label{lem:characterization WEH}
	Let $A$ be a subset of the hyperconvex metric space $X$. Then, $A$ is weakly externally hyperconvex if and only if for every $x \in X$ and setting $s := d(x,A)$, the following hold:
	\begin{enumerate}[$(i)$]
		\item the intersection $B(x,s) \cap A$ is externally hyperconvex in $A$ and
		\item for every $y \in A$ there is some $a \in B(x,s) \cap A$ such $d(x,y)=d(x,a)+d(a,y)$.
\end{enumerate}	 
\end{Lem}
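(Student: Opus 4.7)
The plan is to prove both implications. The forward direction will follow almost directly from the definition of weak external hyperconvexity by adjoining the ball $B(x,s)$ to suitable compatible families in $A \cup \{x\}$. For the reverse direction, assuming (i) and (ii), I first bootstrap hyperconvexity of $A$, then promote the externally hyperconvex set $B(x,s) \cap A$ from (i) to larger balls $B(x, r_0) \cap A$ for $r_0 \ge s$, which is what weak external hyperconvexity ultimately requires.

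For the forward direction, fix $x \in X$ and set $s = d(x, A)$. To prove (ii), given $y \in A$, apply the defining property of $A \in \mathcal{W}(X)$ to the two balls $B(x, s)$ and $B(y, d(x,y) - s)$ in $A \cup \{x\}$; their compatibility is immediate, and the triangle inequality forces the common point $a \in A$ to satisfy the claimed segment equality. To prove (i), take a family $\{B(y_i, r_i)\}_{i \in I}$ of balls in $A$ with $d(y_i, y_j) \le r_i + r_j$ and $d(y_i, B(x,s) \cap A) \le r_i$. The projection inequality together with the triangle inequality (after passing to approximate minimizers) gives $d(x, y_i) \le s + r_i$, so adjoining $B(x,s)$ yields a compatible family in $A \cup \{x\}$; weak external hyperconvexity then supplies the desired common point of $B(x,s) \cap A$ and all the $B(y_i, r_i)$.

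For the reverse direction, fix $x \in X$ and take a compatible family $\{B(z_i, r_i)\}_{i \in I}$ in $A \cup \{x\}$. Any balls centered at $x$ can be merged into a single $B(x, r_0)$ with $r_0 \ge s$. I first treat the case where no such ball appears: picking $y \in \bigcap_X B(z_i, r_i)$ by hyperconvexity of $X$, applying (ii) at $y$ to each $z_i \in A$ furnishes points $a_i \in B(y, d(y, A)) \cap A$ at distance $d(y, z_i) - d(y, A) \le r_i$ from $z_i$, and the external hyperconvexity of $B(y, d(y, A)) \cap A$ in $A$ (from (i)) absorbs the family. This proves $A$ is hyperconvex, and in particular geodesic. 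With a ball $B(x, r_0)$ present, condition (ii) combined with the triangle inequality identifies $B(x, r_0) \cap A$ with the $(r_0 - s)$-neighborhood of $B(x, s) \cap A$ taken within $A$, so Lemma~\ref{lem:properties nbhds}(ii) applied inside the now-hyperconvex space $A$ yields $B(x, r_0) \cap A \in \mathcal{E}(A)$. The final distance check $d(z_i, B(x, r_0) \cap A) \le r_i$ is trivial if $z_i \in B(x, r_0)$; otherwise take $a_i \in B(x, s) \cap A$ from (ii), slide along a geodesic in $A$ from $a_i$ toward $z_i$ by distance $r_0 - s$, and use the equality $d(x, z_i) = d(x, a_i) + d(a_i, z_i)$ to conclude that the slid point lies at distance $r_0$ from $x$ and at distance $d(x, z_i) - r_0 \le r_i$ from $z_i$. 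Applying the external hyperconvexity of $B(x, r_0) \cap A$ in $A$ to the remaining balls then finishes the argument.

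The main obstacle is precisely this geodesic-sliding step: (i) alone controls only the minimal projection set $B(x, s) \cap A$, whereas weak external hyperconvexity demands control of all $B(x, r_0) \cap A$ with $r_0 \ge s$. Condition (ii) is the geodesic statement that bridges the two by exhibiting the larger set as an $A$-neighborhood of the smaller one, and turning this into an honest geodesic inside $A$ depends on the bootstrapped hyperconvexity of $A$; hence the two cases of the reverse direction must be treated in that order.
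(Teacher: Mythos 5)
Your proposal is correct and follows essentially the same route as the paper's proof: the forward direction via the two-ball family $B(x,s)$, $B(y,d(x,y)-s)$, and the reverse direction by first bootstrapping hyperconvexity of $A$ from $(i)$ and $(ii)$, then identifying $B(x,r_0)\cap A$ with the $(r_0-s)$-neighborhood of $B(x,s)\cap A$ inside $A$ and invoking Lemma~\ref{lem:properties nbhds}. Your explicit geodesic-sliding step just spells out the distance estimate $d(z_i,B(x,r_0)\cap A)\le r_i$ that the paper asserts more tersely from $(ii)$.
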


\begin{proof}
	If $A$ is weakly externally hyperconvex, $(i)$ clearly follows. Moreover, for $y \in A$, it also follows by weak external hyperconvexity of $A$ that there is some $a \in A \cap B(x,s) \cap B(y,d(x,y)-s) \neq \emptyset$ and therefore $d(x,y) \leq d(x,a) + d(a,y) \leq s + d(x,y)-s = d(x,y)$.
	
	For the converse first observe that $A$ must be hyperconvex since if $x \in \bigcap B(x_i,r_i)$ for $x_i \in A$ then $d(x_i,B(x,d(x,A))\cap A) \leq r_i$ by $(ii)$ and therefore there is also $x' \in A\cap B(x,d(x,A))\cap \bigcap B(x_i,r_i)$ by $(i)$.
	
	Now pick $x \in X$ and $r \geq s=d(x,A)$. Then by $(ii)$ we have $B(x,r)\cap A = B^A(B(x,s)\cap A, r-s)$ and therefore $B(x,r)\cap A$ is externally hyperconvex in $A$ by $(i)$ and Lemma~\ref{lem:properties nbhds}. Moreover by $(ii)$ for $x_i \in a$ with $d(x,x_i) \leq r+r_i$ we have $d(x_i, B(x,r)\cap A) \leq r_i$ and therefore $B(x,r)\cap \bigcap_i B(x_i,r_i) \cap A \neq \emptyset$ by Proposition~\ref{prop:intersection of EH}.
\end{proof}

\begin{Lem}\label{lem:retraction WEH}
	Let $X$ be a metric space, $A \in \mathcal{W}(X)$ and $s \geq 0$. Then there is an $s$-constant retraction $\rho \colon B(A,s) \to A$, i.e. $d(x,\rho(x)) \leq s$ for all $x \in B(A,s)$.
\end{Lem}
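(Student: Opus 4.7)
The plan is to reduce the statement to proximinality of $A$. As the paper explicitly notes in the paragraph preceding Theorem~\ref{Thm:CellsInjectiveHullWEH}, a weakly externally hyperconvex subset is automatically proximinal. I would first record this fact: given any $y \in X$, apply the defining condition of weak external hyperconvexity in $A\cup\{y\}$ to the single closed ball $B(y,d(y,A))$. The pairwise ball intersection condition is vacuous (only one ball), and the hypothesis $d(y,A) \le d(y,A)$ holds trivially, so $A\cap B(y,d(y,A)) \neq \es$.

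Given this, the construction of $\rho$ is then immediate. For each $x \in B(A,s)$ we have $d(x,A) \le s$, so invoking the axiom of choice we pick a point
\[
\rho(x) \in A \cap B\bigl(x,d(x,A)\bigr).
\]
By construction $d(x,\rho(x)) \le d(x,A) \le s$, which is the required $s$-bound. If moreover $x \in A$ then $d(x,A) = 0$ and $B(x,0)\cap A = \{x\}$, forcing $\rho(x) = x$. Hence $\rho|_A = \id_A$ and $\rho\colon B(A,s)\to A$ is a retraction satisfying the desired pointwise estimate.

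I do not expect a substantial obstacle here: the lemma is essentially just the pointwise selection afforded by proximinality, and it uses no hyperconvexity of the ambient space $X$. The statement deliberately demands only the pointwise inequality $d(x,\rho(x)) \le s$ and does not ask for continuity or Lipschitz regularity of $\rho$; a more structured retraction would require further hypotheses on $X$ and a Zorn-type extension argument in the spirit of classical constructions of retractions onto injective subspaces, but such refinements are not needed here.
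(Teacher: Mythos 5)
There is a genuine gap: your argument produces only a pointwise selection with no control whatsoever on $d(\rho(x),\rho(y))$ for distinct $x,y$, whereas the lemma is about a \emph{nonexpansive} retraction. In this paper (as in the cited references, e.g.\ \cite{EKL}, whose very subject is nonexpansive retractions), ``retraction'' means $1$-Lipschitz retraction, and this is exactly the property the subsequent applications need. In the proof of Lemma~\ref{lem:nbhd of WEH} the authors form the intersection $B(x,s+r)\cap\bigcap_i B(\rho(x_i),r_i)\cap A$, whose non-emptiness requires $d(\rho(x_i),\rho(x_j))\le r_i+r_j$; this follows from nonexpansiveness via $d(\rho(x_i),\rho(x_j))\le d(x_i,x_j)\le r_i+r_j$, but for an arbitrary proximinal selection one only gets $d(\rho(x_i),\rho(x_j))\le 2s+r_i+r_j$. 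The same issue arises in Lemma~\ref{lem:locally externally hyperconvex}, where the points $y_i=\rho(x_i)$ must satisfy $d(y_i,y_j)\le r_i+r_j$. So the ``refinement'' you explicitly set aside is the actual content of the lemma, and it is the reason the hypothesis is $A\in\mathcal{W}(X)$ rather than mere proximinality.

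The paper's proof is precisely the Zorn-type extension you mention at the end: one takes a maximal element $(\tilde B,\tilde\rho)$ among partial $s$-constant nonexpansive retractions extending $\id_A$, and to adjoin a point $x\in B(A,s)\setminus\tilde B$ one uses weak external hyperconvexity of $A$, with $x$ as the external point, to find
\[
z \in B(x,s)\cap\bigcap_{y\in\tilde B}B\bigl(\tilde\rho(y),d(x,y)\bigr)\cap A,
\]
the admissibility conditions $d(\tilde\rho(y),\tilde\rho(y'))\le d(x,y)+d(x,y')$ and $d(x,\tilde\rho(y))\le d(x,y)+s$ holding because $\tilde\rho$ is already nonexpansive and $s$-constant. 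Setting $\tilde\rho(x):=z$ preserves both properties and contradicts maximality. Note that, contrary to your final remark, no extra hypotheses on $X$ are needed for this: weak external hyperconvexity of $A$ is exactly the tool that makes the one-point nonexpansive extension possible.
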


\begin{proof}
	Consider the partially ordered set
	\begin{align*}
	\mathcal{F} := \{ (B,\rho) : B \subset B(A,s) \text{ and } \rho \colon B \to A \text{ is an }s\text{-constant retraction} \}.
	\end{align*}
	By Zorn's Lemma there is some maximal element $(\tilde{B},\tilde{\rho}) \in \mathcal{F}$. Assume that there is some $x \in B(A,s) \setminus \tilde{B}$. For all $y \in \tilde{B}$ define $r_y = d(x,y)$. Then we have $$d(x,\rho(y)) \leq d(x,y) + d(y,\rho(y)) \leq r_y + s$$ and therefore since $A$ is weakly externally hyperconvex there is some $$z \in B(x,s) \cap \bigcap_{y \in \tilde{B}} B(\rho(y),r_y) \cap A.$$
	But then defining $\tilde{\rho}(x) := z$ we can extends $\tilde{\rho}$ to $\tilde{B} \cup \{x\}$ contradicting maximality of $(\tilde{B},\tilde{\rho})$. Hence we conclude $\tilde{B}=B(A,s)$.
\end{proof}

This and the first part of the following result can also be found in \cite{Esp,EKL} where a complete characterization of weakly externally hyperconvex subsets in terms of retractions is given.

\begin{Lem}\label{lem:nbhd of WEH}
	Let $A$ be a weakly externally hyperconvex subset of a hyperconvex space $X$. Then for any $s \geq 0$, the closed neighborhood $B(A,s)$ is weakly externally hyperconvex. Moreover, if for all $x \in X$ we have $B(x,d(x,A))\cap A \in \mathcal{E}(X)$, then this also holds for $B(A,s)$, i.e. $B(x,d(x,B(A,s)))\cap B(A,s) \in \mathcal{E}(X)$.
\end{Lem}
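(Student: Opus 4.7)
I will verify the two assertions separately. The first part---that $B(A,s)$ is weakly externally hyperconvex---uses the retraction from Lemma~\ref{lem:retraction WEH} together with WEH of $A$, while the second follows from a decomposition of the intersection combined with Proposition~\ref{prop:intersection of EH}.

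For the first assertion, I first extract a $1$-Lipschitz property from the proof of Lemma~\ref{lem:retraction WEH}: at each Zorn-extension step the new image is chosen to lie in $\bigcap_{y\in\tilde B}B(\rho(y),d(x,y))$, so $d(\rho(x),\rho(y))\le d(x,y)$, and the resulting $s$-constant retraction $\rho\colon B(A,s)\to A$ is therefore $1$-Lipschitz. Now fix $x\in X$ and balls $\{B(y_i,r_i)\}_{i\in I}$ centered in $B(A,s)\cup\{x\}$ satisfying the usual WEH compatibility conditions; after absorbing the $x$-centered balls into one of radius $r_0\ge t:=d(x,B(A,s))$, set $a_i:=\rho(y_i)\in A$ for the remaining $y_i\in B(A,s)$. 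By $1$-Lipschitzness, $d(a_i,a_j)\le d(y_i,y_j)\le r_i+r_j$, and $s$-constancy gives $d(a_i,y_i)\le s$. Invoking WEH of $A$ in $A\cup\{x\}$ with the balls $B(x,r_0+s)$ and $\{B(a_i,r_i)\}$---whose compatibilities reduce to routine triangle inequalities together with $d(x,A)\le t+s\le r_0+s$---produces $a\in A$ with $d(a,x)\le r_0+s$ and $d(a,a_i)\le r_i$, hence $d(a,y_i)\le r_i+s$. All pairwise conditions between $a$, $x$, and the $y_i$ are then met, so hyperconvexity of $X$ furnishes $z\in B(a,s)\cap B(x,r_0)\cap\bigcap_i B(y_i,r_i)\subset B(A,s)$.

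For the second assertion, set $t:=d(x,B(A,s))$. If $x\in B(A,s)$, then $B(x,t)\cap B(A,s)=\{x\}\in\mathcal{A}(X)\subset\mathcal{E}(X)$. Otherwise, proximinality of the WEH set $A$ combined with a geodesic in the hyperconvex $X$ gives $d(x,A)=t+s$. Writing $K_A:=B(x,d(x,A))\cap A$, which lies in $\mathcal{E}(X)$ by hypothesis, a direct double inclusion shows $B(x,t)\cap B(A,s)=B(x,t)\cap B(K_A,s)$. By Lemma~\ref{lem:properties nbhds}$(ii)$ we have $B(K_A,s)\in\mathcal{E}(X)$, and $B(x,t)\in\mathcal{A}(X)\subset\mathcal{E}(X)$; the two sets intersect (a common point is produced by the same geodesic argument) and $B(x,t)$ is bounded, so Proposition~\ref{prop:intersection of EH} yields $B(x,t)\cap B(A,s)\in\mathcal{E}(X)$.

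The main obstacle is the pairwise condition $d(a_i,a_j)\le r_i+r_j$ needed in Part~1: relying only on the stated $s$-constancy of $\rho$ would give $d(a_i,a_j)\le r_i+r_j+2s$, which is too weak to invoke WEH of $A$ with the tight radii $r_i$. Unearthing the $1$-Lipschitz refinement of $\rho$---implicit in the Zorn construction of Lemma~\ref{lem:retraction WEH} but not recorded in its statement---is precisely what makes the reduction to WEH of $A$ succeed.
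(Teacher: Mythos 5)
Your proof is correct and follows essentially the same route as the paper's: for the first claim you retract the centers onto $A$ via Lemma~\ref{lem:retraction WEH}, apply weak external hyperconvexity of $A$, and finish with hyperconvexity of $X$; for the second you rewrite $B(x,t)\cap B(A,s)$ as $B(x,t)\cap B(B(x,d(x,A))\cap A,s)$ and invoke Lemma~\ref{lem:properties nbhds} and Proposition~\ref{prop:intersection of EH}. Your explicit verification that the Zorn construction yields a $1$-Lipschitz retraction---needed to obtain $d(\rho(y_i),\rho(y_j))\le r_i+r_j$ before invoking weak external hyperconvexity of $A$---correctly makes precise a step the paper uses only implicitly (and which is also required for the extension step in Lemma~\ref{lem:retraction WEH} itself).
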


\begin{proof}
	Let $x\in X$ and $x_i \in B(A,s)$ such that $d(x_i,x_j) \leq r_i + r_j$, $d(x,x_i) \leq r + r_i$ and $d(x,B(A,s))\leq r$. By the previous lemma there is a retraction $\rho \colon B(A,s) \to A$ such that $d(y,\rho(y)) \leq s$. Then we have $d(\rho(x_i),x) \leq d(\rho(x_i),x_i) + d(x_i,x) \leq s + r + r_i$ and therefore there is some
	\[
		y \in B(x,s+r) \cap \bigcap_i B(\rho(x_i),r_i)\cap A
	\]
	with $d(x,y) \leq r+s$ and $d(x_i,y)\leq d(x_i ,\rho(x_i)) + d(\rho(x_i),y) \leq s+r_i$. Hence by hyperconvexity of $X$ there is some
	\[
		z \in B(x,r) \cap \bigcap_i B(x_i,r_i) \cap B(y,s) \subset B(x,r) \cap \bigcap_i B(x_i,r_i) \cap B(A,s)
	\]
	as required.
	
	Now, define $r=d(x,B(A,s))$. We claim that 
	\[
		B(x,r)\cap B(A,s) = B(x,r) \cap B(B(x,d(x,A))\cap A,s)
	\]
	and therefore if $B(x,d(x,A))\cap A \in \mathcal{E}(X)$ we also have $B(x,r)\cap B(A,s) \in \mathcal{E}(X)$.
	
	Indeed if $y \in B(x,r)\cap B(A,s)$ there is some $a \in A$ such that $d(y,a)=s$ and hence $d(x,a) \leq d(x,y)+d(y,a) \leq r+s = d(x,A)$, i.e. $a \in  B(x,d(x,A))\cap A$.
\end{proof}

\begin{Expl}\label{ex:nbhd of H}
	In general it is not true that the neighborhood of a hyperconvex subset is hyperconvex as well. Consider the isometric embedding $\iota \colon  \mathbb{R} \to l_\infty^3$ of the real line given by
	\begin{align*}
		\iota (t) = \begin{cases}
			(t,t,-t), & t \leq 0, \\
			(t,t,t),  & 0 \leq t \leq 10, \\
			(20-t,t,t)& t \geq 10,
		\end{cases}		
	\end{align*}
	and define $A=\iota(\mathbb{R}) \in \mathcal{H}(l_\infty^3)$. Then the two points $x=(-2,0,2), y=(8,10,12)$ are contained in $B(A,1)$ since $d(x,\iota(-1))=d(y,\iota(11))=1$. Now look at the intersection $B(x,5)\cap B(y,5)=\{z=(3,5,7)\}$. But $d(z,A) = d(z,\iota(5)) = 2$ and therefore $B(x,5) \cap B(y,5) \cap B(A,1) = \emptyset$, i.e. $B(A,1)$ is not hyperconvex, even not geodesic.
\end{Expl}

\begin{Expl}\label{ex:intersection of WEH}
The intersection property for externally hyperconvex subsets stated in Proposition~\ref{prop:intersection of EH} does not hold for weakly externally hyperconvex subsets. Indeed, consider the points $z:=(1,1)$, $w:=(-1,-1)$ and the half-space $H:=\{y \in l^2_{\infty} : y_1 - y_2 \ge 2\}$ in $l^2_{\infty}$. Clearly, $B(z,1)$, $B(w,1)$ and $H$ are all three elements of $\mathcal{W}(l^2_{\infty})$ and they are pairwise intersecting. However, $B(z,1) \cap B(w,1) \cap H = \emptyset$.
\end{Expl}

\begin{Lem}\label{lem:subset of WEH}
	Let $X$ be a metric space, $Y \in \mathcal{W}(X)$ and $A \in \mathcal{E}(Y)$. Then we have $A \in \mathcal{W}(X)$.
\end{Lem}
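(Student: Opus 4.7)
The plan is to verify directly that $A$ is externally hyperconvex in $A\cup\{x\}$ for every $x\in X$, which is the definition of $A\in\mathcal{W}(X)$. Fix $x\in X$ and a family of balls $\{B(x_i,r_i)\}_{i\in I}$ with $x_i\in A\cup\{x\}$ satisfying $d(x_i,x_j)\le r_i+r_j$ and $d(x_i,A)\le r_i$; I need to produce $a\in A\cap\bigcap_i B(x_i,r_i)$. When every $x_i$ lies in $A$, hyperconvexity of $A$ (which is inherited from $A\in\mathcal{E}(Y)$) already yields such an $a$, so I may restrict to the case where exactly one index $i_0$ satisfies $x_{i_0}=x$; write $r:=r_{i_0}$, so that $d(x,A)\le r$.

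The crux of the proof will be to show that $C:=B(x,r)\cap Y$ is externally hyperconvex in $Y$. For a candidate family $\{B(y_j,t_j)\}_{j\in J}$ of balls in $Y$ with $d(y_j,y_k)\le t_j+t_k$ and $d(y_j,C)\le t_j$, choosing $z_j\in C$ with $d(y_j,z_j)\le t_j$ and invoking the triangle inequality gives $d(x,y_j)\le r+t_j$; combined with $d(x,Y)\le d(x,A)\le r$, the weak external hyperconvexity of $Y$ in $X$ applied to the enlarged family $\{B(x,r)\}\cup\{B(y_j,t_j)\}_{j\in J}$ produces a point $y\in Y\cap B(x,r)\cap\bigcap_j B(y_j,t_j)=C\cap\bigcap_j B(y_j,t_j)$, which establishes $C\in\mathcal{E}(Y)$.

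With this in hand I would work entirely inside the hyperconvex space $Y$ and apply Proposition~\ref{prop:intersection of EH} to the family $\{A,C\}\cup\{B(x_i,r_i)\cap Y:i\ne i_0\}$ of externally hyperconvex subsets of $Y$. The required pairwise intersections are straightforward: $x_i\in A\cap B(x_i,r_i)$ for $i\ne i_0$, the assumption $d(x,A)\le r$ supplies a point of $A\cap C$, hyperconvexity of $Y$ handles intersections between pairs of balls $B(x_i,r_i)\cap Y$, and a two-ball application of the weak external hyperconvexity of $Y$ in $X$ to $B(x,r)$ and $B(x_i,r_i)$ (whose centers clearly satisfy the compatibility conditions) gives a point of $C\cap B(x_i,r_i)\cap Y$. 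Since $C$ is bounded, the proposition delivers a common point $a\in A\cap B(x,r)\cap\bigcap_i B(x_i,r_i)$, as required. The main obstacle is the intermediate claim $C\in\mathcal{E}(Y)$: it is precisely the step that converts the external ball constraint $B(x,r)$ (whose center need not lie in $Y$) into an externally hyperconvex subset of the ambient hyperconvex space $Y$, so that the remaining intersection argument can be carried out entirely within $Y$.
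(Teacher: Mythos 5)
Your overall strategy coincides with the paper's: work inside the hyperconvex space $Y$, observe that $C:=B(x,r)\cap Y$ and the sets $B(x_i,r_i)\cap Y$ are externally hyperconvex in $Y$, check pairwise intersections, and invoke Proposition~\ref{prop:intersection of EH} with the bounded set $C$ in the family. Your verification that $C\in\mathcal{E}(Y)$ via weak external hyperconvexity of $Y$ is correct (modulo the harmless point that $d(y_j,C)\le t_j$ only yields $z_j\in C$ with $d(y_j,z_j)\le t_j+\delta$ for arbitrary $\delta>0$, which still gives $d(x,y_j)\le r+t_j$ in the limit), and this is the step the paper leaves implicit.

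There is, however, one genuine gap: the pairwise intersection $A\cap C\neq\emptyset$. You claim that ``the assumption $d(x,A)\le r$ supplies a point of $A\cap C$,'' but $d(x,A)$ is an infimum, and since $x$ need not lie in $Y$ there is no a priori reason for it to be attained: $A\in\mathcal{E}(Y)$ makes $A$ proximinal from points of $Y$, not from points of $X$. Thus $d(x,A)\le r$ only guarantees points of $A$ in $B(x,r+\epsilon)$ for every $\epsilon>0$, not in $B(x,r)$ itself. The paper's proof is structured precisely to avoid this issue: instead of the single set $C$ it feeds the whole family $B_\epsilon:=B(x,r+\epsilon)\cap Y$, $\epsilon>0$, into Proposition~\ref{prop:intersection of EH} together with $A$ and the sets $B(x_i,r_i)\cap Y$; each $B_\epsilon$ meets $A$ because $d(x,A)\le r<r+\epsilon$, and $\bigcap_{\epsilon>0}B_\epsilon=B(x,r)\cap Y$, so the conclusion is the same. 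Your argument is repaired by exactly this substitution (or, equivalently, by first applying Proposition~\ref{prop:intersection of EH} to $\{A\}\cup\{B_\epsilon\}_{\epsilon>0}$ to show that the distance from $x$ to $A$ is attained); the remainder of your proof then goes through unchanged.
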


\begin{proof}
	Let $x \in X$, $r \geq d(x,A)$ and let $\{B(x_i,r_i)\}_{i\in I}$ be a collection of balls with $x_i \in A$ , $d(x_i,x_j) \leq r_i + r_j$ and $d(x_i,x) \leq r_i + r$. Then the sets $B_\epsilon := B(x,r+\epsilon)\cap Y$, $A_i := B(x_i,r_i)\cap Y$ and $A$ are pairwise intersecting externally hyperconvex subsets of $Y$ and therefore by Proposition~\ref{prop:intersection of EH} we have
	\[
		B(x,r) \cap A \cap \bigcap_i B(x_i,r_i) = \bigcap_{\epsilon > 0} B_\epsilon \cap A \cap \bigcap_i A_i \neq \emptyset.
	\]
\end{proof}

\begin{Lem}\label{lem:intersection of WEH}
	Let $X$ be a metric space, $A \in \mathcal{E}(X), Y \in \mathcal{W}(X)$ such that $A\cap Y \neq \emptyset$. Let $\{ x_i \}_{i \in I} \subset Y$ be a collection of points with $d(x_i,x_j) \leq r_i+r_j$ and $d(x_i,A) \leq r_i$. Then for any $s > 0$ there are $a\in A \cap \bigcap_i B(x_i,r_i)$ and $y \in Y \cap \bigcap_i B(x_i,r_i)$ with $d(a,y) \leq s$.
\end{Lem}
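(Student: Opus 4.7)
My plan is to combine external hyperconvexity of $A$ with an application of Proposition~\ref{prop:intersection of EH} carried out inside the hyperconvex space $Y$.

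First, using $A\in\mathcal{E}(X)$, the assumptions $d(x_i,x_j)\leq r_i+r_j$ and $d(x_i,A)\leq r_i$ yield a point $a\in A\cap\bigcap_i B(x_i,r_i)$.

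To produce the matching point $y$, I work inside $Y$. Each $B(x_i,r_i)\cap Y$ is a closed ball in the hyperconvex space $Y$, hence admissible and therefore in $\mathcal{E}(Y)$. By Lemma~\ref{lem:characterization WEH}(i) applied to $Y\in\mathcal{W}(X)$ with external point $a$, the set $B(a,d(a,Y))\cap Y$ also belongs to $\mathcal{E}(Y)$. These subsets pairwise intersect in $Y$: for two $B(x_i,r_i)\cap Y$'s by $d(x_i,x_j)\leq r_i+r_j$ and hyperconvexity of $Y$; and for $B(a,d(a,Y))\cap Y$ with $B(x_i,r_i)\cap Y$ by weak external hyperconvexity of $Y$ with external point $a$ (after checking $d(x_i,a)\leq r_i\leq r_i+d(a,Y)$ and $d(a,Y)\leq d(a,Y)$). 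Proposition~\ref{prop:intersection of EH} applied inside $Y$, with $B(a,d(a,Y))\cap Y$ as the bounded member, then produces $y\in Y\cap B(a,d(a,Y))\cap\bigcap_i B(x_i,r_i)$, so that $d(a,y)\leq d(a,Y)$.

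The lemma is thereby reduced to choosing $a\in A\cap\bigcap_i B(x_i,r_i)$ with $d(a,Y)\leq s$, and this refinement is the main obstacle: for an arbitrary $a$ in that intersection, $d(a,Y)$ is not a priori controlled. To overcome it one uses the hypothesis $A\cap Y\neq\emptyset$ in an essential way: fixing $c\in A\cap Y$, one iterates the construction above---alternately applying external hyperconvexity of $A$ with an auxiliary ball about the current $y$-candidate, and the $Y$-side procedure with the resulting $a$-candidate---and the presence of the common anchor $c$ drives the successive mutual distances down to within the prescribed tolerance $s$.
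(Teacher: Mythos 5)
Your setup is sound up to the point you yourself flag as the main obstacle, but the final paragraph --- which is where the entire content of the lemma lives --- does not describe a working argument. The iteration you sketch, alternately picking $a_n\in A\cap\bigcap_i B(x_i,r_i)\cap B(y_{n-1},\rho_n)$ via external hyperconvexity of $A$ and then $y_n\in Y\cap\bigcap_i B(x_i,r_i)\cap B(a_n,d(a_n,Y))$ via the $Y$-side procedure, only yields the monotonicity $d(y_n,A)\le d(a_n,Y)\le d(y_{n-1},A)$; nothing forces these quantities to drop below $s$, and the anchor $c\in A\cap Y$ cannot help in such an iteration because the iterates are already constrained to lie in $\bigcap_i B(x_i,r_i)$, which may be far from $c$. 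Note also that you cannot simply start this iteration at $c$ (where the mutual distance would be $0$): the external-hyperconvexity step with the balls $B(x_i,r_i)$ and $B(c,\rho)$ requires the compatibility $d(x_i,c)\le r_i+\rho$, which fails for small $\rho$ when $c$ is far from $\bigcap_i B(x_i,r_i)$.

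The paper's proof resolves exactly this tension by enlarging the radii. Starting from $y_0\in A\cap Y$ and setting $d:=d\bigl(y_0,\bigcap_i B(x_i,r_i)\bigr)$ (finite, since the intersection meets $Y$ by hyperconvexity of $Y$), one picks at stage $n$ a point $a_n\in\bigcap_i B(x_i,r_i+d-ns)\cap B(y_{n-1},s)\cap A$ using $A\in\mathcal{E}(X)$ --- the needed compatibility $d(x_i,y_{n-1})\le r_i+d-(n-1)s$ holds by induction --- and then $y_n\in\bigcap_i B(x_i,r_i+d-ns)\cap B(a_n,s)\cap Y$ using $Y\in\mathcal{W}(X)$ with external point $a_n$. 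The mutual distance is $\le s$ at every stage by construction; what decreases is the slack $d-ns$ in the radii, and after roughly $\lfloor d/s\rfloor+1$ steps the pair lands in $\bigcap_i B(x_i,r_i)$ itself. This radius-enlargement device is the missing idea: without it, your reduction to ``find $a\in A\cap\bigcap_i B(x_i,r_i)$ with $d(a,Y)\le s$'' has no route to completion.
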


\begin{proof}
	Let $y_0 \in A \cap Y$ and $d=d(y_0,\bigcap B(x_i,r_i))$. Without loss of generality we may assume that $s \leq d$. Then since $A \in \mathcal{E}(X)$ and $Y \in \mathcal{W}(X)$ there are 
\begin{align*}
	a_1 &\in \bigcap_i B(x_i, r_i+d-s) \cap B(y_0,s) \cap  A, \\
	y_1 &\in \bigcap_i B(x_i, r_i+d-s) \cap B(a_1,s) \cap Y.
\end{align*}
Proceeding this way, we can choose inductively
\begin{align*}
	a_n &\in \bigcap_i B(x_i, r_i+d-ns) \cap B(y_{n-1},s) \cap A, \\
	y_n &\in \bigcap_i B(x_i, r_i+d-ns) \cap B(a_n,s) \cap Y
\end{align*}
	for $n \leq \lfloor \frac{d}{s} \rfloor =: n_0$ and finally there are
\begin{align*}
	a &\in \bigcap_i B(x_i, r_i) \cap B(y_{n_0},s) \cap A, \\
	y &\in \bigcap_i B(x_i, r_i) \cap B(a,s) \cap Y
\end{align*}
	as desired.
\end{proof}

\begin{Prop}\label{prop:intersection of WEH}
	Let $X$ be a metric space, $A \in \mathcal{E}(X), Y \in \mathcal{W}(X)$ such that $A\cap Y \neq \emptyset$. Then we have $A\cap Y \in \mathcal{E}(Y)$ and therefore $A\cap Y \in \mathcal{W}(X)$.
\end{Prop}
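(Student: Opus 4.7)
The plan is to first establish $A \cap Y \in \mathcal{E}(Y)$; the final assertion $A \cap Y \in \mathcal{W}(X)$ then follows immediately from Lemma~\ref{lem:subset of WEH} applied to the pair $(Y, A \cap Y)$. To verify external hyperconvexity of $A \cap Y$ in $Y$, I take a family of closed balls $\{B(x_i, r_i)\}_{i \in I}$ with centers $x_i \in Y$ satisfying $d(x_i, x_j) \leq r_i + r_j$ and $d(x_i, A \cap Y) \leq r_i$, and aim to produce a point of $A \cap Y$ in $\bigcap_i B(x_i, r_i)$. Since $A \cap Y \subset A$, the hypotheses of Lemma~\ref{lem:intersection of WEH} are satisfied by this collection.

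Lemma~\ref{lem:intersection of WEH} on its own only delivers, for each $s > 0$, a pair $(a, y) \in A \times Y$ with both points inside $\bigcap_i B(x_i, r_i)$ and $d(a, y) \leq s$; it does not produce a point in $A \cap Y$. To upgrade this to an actual common point I will construct iteratively sequences $(a_n)_{n \geq 1} \subset A$ and $(y_n)_{n \geq 1} \subset Y$, both lying in $\bigcap_i B(x_i, r_i)$, with $d(a_n, y_n) \leq 2^{-n}$ and $d(y_n, y_{n+1}) \leq 2^{-n}$. The inductive step consists in applying Lemma~\ref{lem:intersection of WEH} not to the original index set but to the enlargement $I \cup \{\ast\}$ obtained by inserting the auxiliary center $x_\ast := y_n$ with radius $r_\ast := 2^{-n}$, and with tolerance $s := 2^{-(n+1)}$. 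The required compatibilities are immediate: $d(y_n, A) \leq d(y_n, a_n) \leq 2^{-n}$, and $d(y_n, x_i) \leq r_i \leq 2^{-n} + r_i$ because $y_n$ already lies in $B(x_i, r_i)$.

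Summability of $(2^{-n})$ makes $(y_n)$ a Cauchy sequence in $Y$; since $Y$ is hyperconvex and therefore complete, $y_n \to y$ for some $y \in Y$, and $a_n \to y$ as well because $d(a_n, y_n) \to 0$. The space $A$ is likewise hyperconvex hence complete, so it is closed in $X$, which forces $y \in A$; closedness of each $B(x_i, r_i)$ then places $y$ in $A \cap Y \cap \bigcap_i B(x_i, r_i)$, as desired. The main obstacle is arranging the inductive step so that the new auxiliary center $y_n$ fits the hypotheses of Lemma~\ref{lem:intersection of WEH} while simultaneously forcing a summable bound on $d(y_n, y_{n+1})$; the geometric tolerance $2^{-n}$ resolves both demands at once.
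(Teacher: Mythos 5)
Your proposal is correct and follows essentially the same route as the paper: iterate Lemma~\ref{lem:intersection of WEH} with an extra ball centered at the previous point $y_n$ of $Y$ to produce two Cauchy sequences in $A$ and $Y$ with a common limit, then invoke Lemma~\ref{lem:subset of WEH} for the final claim. The only differences are cosmetic (index conventions and your explicit completeness/closedness justification for why the limit lies in $A \cap Y \cap \bigcap_i B(x_i,r_i)$, which the paper leaves implicit).
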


\begin{proof}
	Let $\{ x_i \}_{i \in I} \subset Y$ with $d(x_i,x_j) \leq r_i+r_j$ and $d(x_i,A) \leq r_i$. We now construct inductively two converging sequences $(a_n) \subset \bigcap_i B(x_i,r_i) \cap A$ and $(y_n) \subset \bigcap_i B(x_i,r_i) \cap Y$ wirh $d(a_n,y_n) \leq \frac{1}{2^{n+1}}$ as follows. By the previous lemma we may choose
	\begin{align*}
		a_0 &\in \bigcap_i B(x_i,r_i) \cap A, \\
		y_0 &\in \bigcap_i B(x_i,r_i) \cap Y
	\end{align*}
	with $d(a_0,y_0) \leq \frac{1}{2}$. Assume now that we have
	\begin{align*}
		a_n &\in \bigcap_i B(x_i,r_i) \cap A, \\
		y_n &\in \bigcap_i B(x_i,r_i) \cap Y
	\end{align*}
	with $d(a_n,y_n) \leq \frac{1}{2^{n+1}}$. Then applying Lemma~\ref{lem:intersection of WEH} for the collection of balls $\{B(x_i,r_i) \}_{i \in I} \cup \{ B(y_n,\frac{1}{2^{n+1}})\}$ we find
	\begin{align*}
		a_{n+1} &\in \bigcap_i B(x_i,r_i) \cap B(y_n,\tfrac{1}{2^{n+1}}) \cap A, \\
		y_{n+1} &\in \bigcap_i B(x_i,r_i) \cap B(y_n,\tfrac{1}{2^{n+1}}) \cap Y
	\end{align*}
	with $d(a_{n+1},y_{n+1}) \leq \frac{1}{2^{n+2}}$. Especially we have $d(y_{n+1},y_n) \leq \frac{1}{2^{n+1}}$ and
	\[
		d(a_{n+1},a_n) \leq d(a_{n+1},y_n)+d(y_n,a_n) \leq \frac{1}{2^{n+1}}+\frac{1}{2^{n+1}}=\frac{1}{2^n}.
	\]
	Hence the two sequences are Cauchy and therefore converge to some common limit point $z \in A\cap Y \cap \bigcap_i B(x_i,r_i) \neq \emptyset$. Finally $A\cap Y \in \mathcal{W}(X)$ by Lemma~\ref{lem:subset of WEH}.
\end{proof}

Looking at the proof carefully, we see that only $d(x_i,A) \leq r_i$ is assumed. Therefore we may deduce the following corollary:

\begin{Cor}\label{cor:distance in WEH}
	Let $X$ be a metric space, $A \in \mathcal{E}(X), Y \in \mathcal{W}(X)$ such that $A\cap Y \neq \emptyset$. Then for all $x \in Y$ we have $d(x,A)=d(x, A \cap Y)$.
\end{Cor}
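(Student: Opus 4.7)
The inequality $d(x,A)\le d(x,A\cap Y)$ is immediate from $A\cap Y\sub A$, so the content is the reverse bound. My plan is to exploit exactly the observation the authors make in the sentence preceding the corollary: the inductive construction in the proof of Proposition~\ref{prop:intersection of WEH} (and of the supporting Lemma~\ref{lem:intersection of WEH}) never uses the assumption $d(x_i,A\cap Y)\le r_i$; it only ever needs $d(x_i,A)\le r_i$ together with the pairwise condition $d(x_i,x_j)\le r_i+r_j$ to feed into the external/weak external hyperconvexity of $A$ and $Y$. Hence those results actually establish the stronger statement: whenever $\{x_i\}_{i\in I}\sub Y$ satisfies $d(x_i,x_j)\le r_i+r_j$ and $d(x_i,A)\le r_i$, one has $A\cap Y\cap\bigcap_i B(x_i,r_i)\ne\es$.

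The strategy is then to specialize this strengthened statement to the degenerate single-ball case. Fix $x\in Y$ and $\eps>0$, and apply the strengthened proposition to the one-element index family $I=\{0\}$ with $x_0=x$ and $r_0=d(x,A)+\eps$. The pairwise condition $d(x_0,x_0)=0\le 2r_0$ is trivially met, and $d(x_0,A)=d(x,A)\le r_0$ by construction, so the hypotheses hold. We therefore obtain
\[
    A\cap Y\cap B(x,d(x,A)+\eps)\ne\es,
\]
which yields $d(x,A\cap Y)\le d(x,A)+\eps$. Letting $\eps\to 0$ gives the required inequality and completes the proof.

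The only subtle point, and the reason this cannot be deduced directly by citing Proposition~\ref{prop:intersection of WEH} as a black box, is that the conclusion $A\cap Y\in\mathcal E(Y)$ as stated in Proposition~\ref{prop:intersection of WEH} presupposes $d(x_i,A\cap Y)\le r_i$, i.e.\ it assumes a priori what we are trying to prove. So the whole point of the proof is to invoke the internal construction of Proposition~\ref{prop:intersection of WEH} rather than its final statement, and to note that the Cauchy-sequence argument there depends only on distances to $A$, not on distances to $A\cap Y$. Once that is observed, no further work is needed.
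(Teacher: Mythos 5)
Your proposal is correct and follows exactly the route the paper intends: the paper's entire ``proof'' is the remark preceding the corollary that the construction in Lemma~\ref{lem:intersection of WEH} and Proposition~\ref{prop:intersection of WEH} only ever uses $d(x_i,A)\le r_i$, and your specialization to the single ball $B(x,d(x,A)+\eps)$ centered at $x\in Y$ is the intended way to extract the distance equality from that strengthened statement. (The $\eps$ is even dispensable, since $r_0=d(x,A)$ already satisfies the hypotheses, but including it costs nothing.)
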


\begin{Cor}\label{cor:transitivity of WEH}
	Let $X$ be a metric space, $A \in \mathcal{W}(Y)$ for $Y \in \mathcal{W}(X)$. Then we have $A \in \mathcal{W}(X)$.
\end{Cor}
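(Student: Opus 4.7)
The plan is to adapt the iterative Cauchy construction from the proof of Proposition~\ref{prop:intersection of WEH}. Fix $x \in X$ and a radius $r \ge d(x,A)$, together with a compatible family of balls $\{B(x_i,r_i)\}_{i \in I}$ with centres $x_i \in A$ and $d(x,x_i) \le r + r_i$. The goal is to produce a point of $A \cap B(x,r) \cap \bigcap_i B(x_i,r_i)$.

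First I apply the weak external hyperconvexity of $Y$ in $X$ to the family $\{B(x,r)\} \cup \{B(x_i,r_i)\}_{i \in I}$; this is admissible since $d(x,Y) \le d(x,A) \le r$ and $d(x_i,Y)=0$, and it yields a base point $y_0 \in Y \cap B(x,r) \cap \bigcap_i B(x_i,r_i)$. I then construct by induction two sequences $(y_n) \subset Y \cap B(x,r) \cap \bigcap_i B(x_i,r_i)$ and $(a_n) \subset A \cap \bigcap_i B(x_i,r_i)$ with $d(a_n,y_n) \le 2^{-(n+1)}$: at each step, the WEH of $A$ in $Y$ applied to $y_n$ as the external point with the auxiliary ball $B(y_n, 2^{-(n+1)})$ produces $a_{n+1}$, and then the WEH of $Y$ in $X$ applied to $x$ as the external point with the auxiliary ball $B(a_{n+1}, 2^{-(n+2)})$ produces $y_{n+1}$ back in $B(x,r) \cap \bigcap_i B(x_i,r_i)$. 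Both sequences are then Cauchy, and since $A$ and $Y$ are complete (being hyperconvex), they converge to a common limit $z \in A$ which, by continuity of the distance, lies in $B(x,r) \cap \bigcap_i B(x_i,r_i)$ as required.

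The main obstacle is the admissibility of the shrinking auxiliary balls at each step, which requires maintaining the inductive bound $d(y_n,A) \le 2^{-(n+1)}$ together with the compatibility of $B(a_{n+1}, 2^{-(n+2)})$ with $B(x,r)$ (as $x \notin Y$ in general, so $B(x,r)$ cannot be fed directly into the WEH of $A$ in $Y$ step). The cleanest route is to first establish an analog of Lemma~\ref{lem:intersection of WEH} in the setting $A \in \mathcal{W}(Y)$, $Y \in \mathcal{W}(X)$---namely, for any prescribed $s > 0$, the existence of $a \in A$ and $y \in Y$ both in $B(x,r) \cap \bigcap_i B(x_i,r_i)$ with $d(a,y) \le s$---which itself is proved by a shrinking iteration modeled on that of Lemma~\ref{lem:intersection of WEH}, alternating the two weak external hyperconvexities and progressively tightening the radii $r_i + d - ks$ as in the original argument.
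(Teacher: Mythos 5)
Your alternating iteration has a genuine gap, and it is exactly at the point you flag. Write $\delta_n$ for (an upper bound on) $d(x,y_n)-r$ and $\eps_n$ for (an upper bound on) $d(y_n,A)$. The $A$-move uses $A\in\mathcal{W}(Y)$ with the single external point $y_n$, so it cannot reference $x$ at all; the only control on the new point $a_{n+1}$ is $d(x,a_{n+1})\le d(x,y_n)+\eps_n\le r+\delta_n+\eps_n$. The subsequent $Y$-move with external point $x$, target radius $r+\delta_n+\eps_n-t$ and proximity radius $t$ around $a_{n+1}$ is admissible precisely when $t\le\delta_n+\eps_n$, and it returns a point with $\delta_{n+1}=\delta_n+\eps_n-t$ and $\eps_{n+1}=t$. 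Hence $\delta_{n+1}+\eps_{n+1}=\delta_n+\eps_n$: the quantity you need to drive to zero is conserved by the best available estimates, so the scheme cannot reach a point that is simultaneously in $B(x,r)$ and close to $A$. The proposed fix of ``progressively tightening the radii $r_i+d-ks$'' does not help, because the balls $B(x_i,r_i)$ are centered in $A$ and can be kept at full tightness throughout; the obstruction sits entirely in the single ball $B(x,r)$, which the $A$-moves cannot see. (Note also that your intermediate lemma, as literally stated with $a\in A\cap B(x,r)\cap\bigcap_i B(x_i,r_i)$, already contains the full conclusion and is therefore circular; the honest version, with $a$ only in $\bigcap_i B(x_i,r_i)$, is what the above accounting shows your iteration cannot deliver.)

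The paper's proof avoids the iteration altogether by a short reduction: since $Y\in\mathcal{W}(X)$, the set $B(x,r)\cap Y$ is \emph{externally} hyperconvex in $Y$ (for centers $y_j\in Y$ with $d(y_j,B(x,r)\cap Y)\le\rho_j$ one has $d(x,y_j)\le r+\rho_j$, so the single external point $x$ handles all of them at once). This packages the external point $x$ into a member of $\mathcal{E}(Y)$, after which one works entirely inside the hyperconvex space $Y$: Proposition~\ref{prop:intersection of WEH} gives $(B(x,r)\cap Y)\cap A\in\mathcal{E}(A)$, Corollary~\ref{cor:distance in WEH} gives $d(x_i,(B(x,r)\cap Y)\cap A)=d(x_i,B(x,r)\cap Y)\le r_i$, and external hyperconvexity in $A$ of this intersection finishes the proof. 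The key idea your argument is missing is precisely this upgrade of $B(x,r)\cap Y$ from an auxiliary constraint to an externally hyperconvex subset of $Y$, which is what makes the $\mathcal{E}$-versus-$\mathcal{W}$ machinery of Section~2 applicable.
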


\begin{proof}
	Let $x\in X$ with $d(x,A) \leq r$ and $\{ x_i \} _{i \in I} \subset A$ with $d(x_i,x_j) \leq r_i + r_j, d(x_i,x) \leq r_i + r$. Then we have $B(x,r) \cap Y \in \mathcal{E}(Y)$ and $B(x,r) \cap A = (B(x,r) \cap Y) \cap A \in \mathcal{E}(A)$ by Proposition~\ref{prop:intersection of WEH}.
	Moreover, by applying Corollary~\ref{cor:distance in WEH} twice we have
	\[
		d(x_i,(B(x,r) \cap Y) \cap A) = d(x_i,B(x,r) \cap Y) = d(x_i,B(x,r)) \leq r_i
	\]
	and hence $\bigcap_i B(x_i,r_i) \cap B(x,r) \cap A \neq \emptyset$.
\end{proof}

\begin{Cor}\label{cor:admitting distance}
	Let $X$ be a hyperconvex metric space, $A \in \mathcal{E}(X), Y \in \mathcal{W}(X)$. Then there are $a\in A, y\in Y$ with $d(a,y)=d(A,Y)$
\end{Cor}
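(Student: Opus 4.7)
The plan is to set $D = d(A,Y)$ and produce the desired pair $(a,y)$ as limits of two Cauchy sequences $(a_n) \subset A$ and $(y_n) \subset Y$ satisfying $d(a_n,y_n) \leq D + 2^{-n}$ and having summable increments. Since $A,Y \in \mathcal{H}(X)$ are in particular complete, the limits $a = \lim_{n\to\infty} a_n \in A$ and $y = \lim_{n\to\infty} y_n \in Y$ will exist, and passing to the limit in $D \leq d(a_n, y_n) \leq D + 2^{-n}$ will force $d(a,y) = D$.

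The base step takes any $a_0 \in A$, $y_0 \in Y$ with $d(a_0,y_0) \leq D+1$, which exists by definition of $D$ as an infimum. For the inductive step, assuming $a_n,y_n$ with $d(a_n,y_n)\leq D+2^{-n}$ are given, I would first slide $y_n$ toward $A$. By Lemma~\ref{lem:properties nbhds}, $B(A, D + 2^{-n-1}) \in \mathcal{E}(X)$, and it meets $Y$ since $D < D + 2^{-n-1}$; so Proposition~\ref{prop:intersection of WEH} places $Y \cap B(A, D + 2^{-n-1})$ in $\mathcal{W}(X)$, and Corollary~\ref{cor:distance in WEH} gives that the distance from $y_n$ to this set equals its distance to $B(A, D + 2^{-n-1})$, which is bounded by $d(y_n,A) - (D + 2^{-n-1}) \leq 2^{-n-1}$. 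Since weakly externally hyperconvex subsets are proximinal (apply the defining property to a single ball), I pick $y_{n+1} \in Y \cap B(A, D + 2^{-n-1})$ with $d(y_n, y_{n+1}) \leq 2^{-n-1}$.

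Next I would produce $a_{n+1}$ by applying external hyperconvexity of $A$ to the two balls $B(a_n, 2^{-n})$ and $B(y_{n+1}, D + 2^{-n-1})$. The compatibility conditions reduce to $d(y_{n+1}, A) \leq D + 2^{-n-1}$, which holds by construction, and $d(a_n, y_{n+1}) \leq 2^{-n} + (D + 2^{-n-1})$, which follows from $d(a_n, y_n) \leq D + 2^{-n}$ and $d(y_n, y_{n+1}) \leq 2^{-n-1}$ via the triangle inequality. So any point $a_{n+1}$ in the intersection satisfies $d(a_n, a_{n+1}) \leq 2^{-n}$ and $d(a_{n+1}, y_{n+1}) \leq D + 2^{-n-1}$, closing the induction.

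The main obstacle is arranging both slides in the inductive step with the right slack. The first slide needs a mechanism to trade closeness-to-$A$ for closeness-to-$y_n$ within $Y$, and the combination of Proposition~\ref{prop:intersection of WEH} and Corollary~\ref{cor:distance in WEH} provides exactly this by identifying the trace of $B(A, D + 2^{-n-1})$ on $Y$ as a weakly externally hyperconvex, hence proximinal, subset of $X$. The second slide uses external hyperconvexity of $A$ to choose $a_{n+1}$ near $a_n$ without sacrificing closeness to $y_{n+1}$; splitting the gain $2^{-n}$ as $2^{-n-1} + 2^{-n-1}$ between the two slides is what makes both compatibility conditions fit simultaneously.
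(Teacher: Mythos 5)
Your proof is correct and follows essentially the same route as the paper: both iteratively slide $y_n$ toward $A$ inside $Y$ using Lemma~\ref{lem:properties nbhds}, Proposition~\ref{prop:intersection of WEH} and Corollary~\ref{cor:distance in WEH}, and pass to the limit of the resulting Cauchy sequence. The only difference is cosmetic: the paper dispenses with your companion sequence $(a_n)$ and simply applies proximinality of the externally hyperconvex set $A$ once to the limit point $y$, which satisfies $d(y,A)\le d(A,Y)$.
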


\begin{proof}
	Let $s=d(A,Y)$. First observe that for any $n \in \mathbb{N}$ we have $B(A,s+\frac{1}{2^{n}}) \in \mathcal{E}(X)$ and $B(A,s+\frac{1}{2^{n}})\cap Y \in \mathcal{W}(X)$ by Proposition~\ref{prop:intersection of WEH}.
	So given $y_n \in B(A,s+\frac{1}{2^n}) \cap Y$ by Corollary~\ref{cor:distance in WEH} there is some $y_{n+1} \in B(y_n, \frac{1}{2^{n+1}}) \cap B(A, s+ \frac{1}{2^{n+1}}) \cap Y$. Therefore we can find a sequence $(y_n) \subset Y$ with $d(y_n,A) \leq s+\frac{1}{2^n}$ and $d(y_n,y_{n-1})\leq \frac{1}{2^n}$. Hence $(y_n)$ is Cauchy and converges to some point $y \in Y$ with $d(y,A) \leq s$. Now since $A$ is proximinal there is some $a \in A$ with $d(a,y) \leq s=d(A,Y)$ as required.
\end{proof}

The following proposition answers an open question on the intersection of weakly externally hyperconvex sets stated in \cite{EspK} for \textit{proper} metric spaces, i.e. for spaces where all closed balls are compact.

\begin{Prop}\label{prop:IntersectionWEH}
	Let $X$ be a proper hyperconvex metric space and $Y,Y' \in \mathcal{W}(X)$ with non-empty intersection. Then we have $Y \cap Y' \in \mathcal{W}(X)$.
\end{Prop}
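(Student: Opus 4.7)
The plan is to verify weak external hyperconvexity of $Y \cap Y'$ directly. Fix $x \in X$ and a collection of balls $\{B(x_i,r_i)\}_{i \in I}$ with centers $x_i \in Y \cap Y'$ satisfying $d(x_i,x_j) \leq r_i + r_j$, $d(x,x_i) \leq r + r_i$, and $d(x,Y \cap Y') \leq r$. The task is to show that $(Y \cap Y') \cap B(x,r) \cap \bigcap_i B(x_i,r_i) \ne \es$. Let
\[
S := B(x,r) \cap \bigcap_{i \in I} B(x_i,r_i).
\]
As an admissible subset of the hyperconvex space $X$, $S$ lies in $\mathcal{E}(X)$; being closed and bounded, properness of $X$ makes $S$ compact. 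By Proposition~\ref{prop:intersection of WEH} applied to $S$ and to $Y$ (resp.~$Y'$), both $S \cap Y$ and $S \cap Y'$ lie in $\mathcal{W}(X)$ and are compact and non-empty.

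It therefore suffices to show $d(S \cap Y, S \cap Y') = 0$: by compactness, this distance is attained by some pair $(y^*, y'^*)$, and then $d(y^*,y'^*)=0$ forces $y^* = y'^* \in S \cap Y \cap Y'$. To establish vanishing of the distance, I plan to construct inductively sequences $(y_n) \subset S \cap Y$ and $(y_n') \subset S \cap Y'$ with $d(y_n, y_n') \to 0$, in the spirit of Lemma~\ref{lem:intersection of WEH} and Proposition~\ref{prop:intersection of WEH}. Starting from $y_0 \in Y \cap Y'$ (non-empty by hypothesis), set $d := d(y_0, S)$ and fix $s > 0$; one alternately applies the weak external hyperconvexity of $Y$ and of $Y'$ to the balls $\{B(x_i,\, r_i + d - ns)\}_i$ augmented by a ball of radius $s$ centered at the most recent iterate in the opposite set. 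After $n_0 := \lceil d/s \rceil$ alternations, the radii return to $r_i$, producing $y \in Y,\,y' \in Y'$ inside $\bigcap_i B(x_i,r_i)$ with $d(y, y') \leq s$; letting $s \to 0$ and invoking compactness of $S$ yields a common limit point in $S \cap Y \cap Y'$.

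The main obstacle I anticipate is the \emph{two-external-center} issue: within one application of weak external hyperconvexity only a single ball-center is allowed outside the set, but the scheme above wants to include both $B(x,r)$ (with $x \notin Y, Y'$ in general) and a ball centered at the opposite-set iterate (also outside the current set). The circumvention is to work inside $B(x,r) \cap Y$, which is externally hyperconvex in $Y$ by Lemma~\ref{lem:characterization WEH} combined with Lemma~\ref{lem:properties nbhds}: within $Y$, external hyperconvexity of $B(x,r) \cap Y$ absorbs the $x$-constraint automatically, leaving the opposite-set iterate as the unique external center. Corollary~\ref{cor:distance in WEH} ensures that distances are preserved under intersection with $Y$ (and symmetrically with $Y'$), so that the inductive estimates on the shrinking radii $r_i + d - ns$ carry through. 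Properness is used crucially twice: to make $S$ compact, so that the distance $d(S \cap Y, S \cap Y')$ is attained, and to extract a convergent subsequence of $(y_n)$ (or of $(y_n')$) whose limit necessarily lies in the closed set $S \cap Y \cap Y'$.
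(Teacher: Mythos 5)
Your proposal is correct and follows essentially the same route as the paper: reduce the problem to showing that $Y\cap\bigcap_i B(x_i,r_i)\cap B(x,r)$ and $Y'\cap\bigcap_i B(x_i,r_i)\cap B(x,r)$ are at distance zero via the alternating zig-zag construction of Lemma~\ref{lem:intersection of WEH}, then invoke properness to get compactness and hence a common limit point. The only (minor) structural difference is that the paper first reduces to showing $Y\cap Y'\in\mathcal{W}(Y)$ via Corollary~\ref{cor:transitivity of WEH} so that $x\in Y$, whereas you keep $x\in X$ arbitrary and absorb the constraint $B(x,r)$ into the externally hyperconvex sets $B(x,r)\cap Y\in\mathcal{E}(Y)$ and $B(x,r)\cap Y'\in\mathcal{E}(Y')$; both devices resolve the two-external-center issue in the same spirit.
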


\begin{proof}
	By Corollary~\ref{cor:transitivity of WEH} it is enough to show that $Y\cap Y' \in \mathcal{W}(Y)$. Therefore let $\{ B(x_i,r_i)\}_{i \in I}$ be a collection of balls with $x_i \in Y \cap Y'$ and $d(x_i,x_j) \leq r_i + r_j$ and $x \in Y$ with $d(x, Y\cap Y') \leq r$ and $d(x_i,x) \leq r_i + r$.
	Let $s > 0$. Since $d(x, Y\cap Y') \leq r$ there is some $y_0 \in Y \cap Y' \cap B(x,r+s).$
	Define $d=d(y_0,\bigcap_i B(x_i,r_i))$. Then there is some 
	\[
	y_0' \in B(y_0,s) \cap Y' \cap B(x,r) \cap \bigcap_i B(x_i,r_i+d).
	\]
	Now for $n \leq \lfloor \frac{d}{s} \rfloor =: n_0$, we can choose inductively
	\begin{align*}
	y_n &\in B(y_{n-1}',s) \cap Y \cap \bigcap_i B(x_i,r_i + d - ns)  \text{ and } \\
	y_n' &\in B(y_n,s) \cap Y' \cap B(x,r) \cap \bigcap_i B(x_i,r_i + d - ns).
	\end{align*}
	Finally there are
	\begin{align*}
	y &\in B(y_{n_0}',s) \cap Y \cap \bigcap_i B(x_i,r_i)  \text{ and } \\
	y' &\in B(y,s) \cap Y' \cap B(x,r) \cap \bigcap_i B(x_i,r_i)
	\end{align*}
	and hence $d(Y \cap \bigcap_i B(x_i,r_i),Y' \cap \bigcap_i B(x_i,r_i) \cap B(x,r)) \leq d(y,y') \leq s$, i.e. $d(Y \cap \bigcap_i B(x_i,r_i),Y' \cap \bigcap_i B(x_i,r_i) \cap B(x,r)) = 0$ and 
	since $X$ is proper, both sets are compact and therefore	
	their intersection $Y \cap Y' \cap \bigcap_i B(x_i,r_i) \cap B(x,r)$ is non-empty.
\end{proof}

\begin{Prop}\label{Prop:IncreasingSequenceWEH}
Let $X$ be any metric space and let $(Y_n)_{n \in \mathbb{N}} \subset \mathcal{W}(X)$ be an increasing sequence (for the inclusion) such that $Y := \overline{\bigcup_n Y_n}$ is proper. Then, one has $Y \in \mathcal{W}(X)$.
\end{Prop}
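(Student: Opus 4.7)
My plan is to verify directly that $Y$ satisfies the defining property of weak external hyperconvexity: given $x \in X$, $r \geq d(x,Y)$, and a collection $\{x_i\}_{i \in I} \subset Y$ with $d(x_i,x_j) \leq r_i+r_j$ and $d(x_i,x) \leq r_i+r$, I must produce a point in $B(x,r) \cap \bigcap_i B(x_i,r_i) \cap Y$. The argument proceeds in two layers: an outer compactness layer that reduces matters to finite subcollections of balls, and an inner layer that uses weak external hyperconvexity of a single $Y_N$ after replacing the $x_i$ by nearby points of $Y_N$.

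For the outer layer, for each $\eps > 0$ and finite $F \subset I$, define
\[
K_F^\eps := B(x,r+\eps) \cap \bigcap_{i \in F} B(x_i,r_i+\eps) \cap Y.
\]
Each $K_F^\eps$ is closed and bounded in the proper space $Y$, hence compact. With $\eps$ fixed, the family $\{K_F^\eps\}_F$ is downward directed since $K_F^\eps \cap K_{F'}^\eps = K_{F\cup F'}^\eps$, and every member lies inside the compact set $B(x,r+\eps)\cap Y$; hence once I prove $K_F^\eps \neq \emptyset$ for all finite $F$, the finite intersection property yields $K_I^\eps \neq \emptyset$. The decreasing family $\{K_I^\eps\}_{\eps>0}$ of non-empty compact sets then has non-empty intersection, which is exactly $B(x,r) \cap \bigcap_{i \in I} B(x_i,r_i) \cap Y$, as required.

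For the inner layer, fix finite $F$ and $\eps > 0$, set $\delta := \eps/2$, and for each $i \in F$ pick $\tilde{x}_i \in Y_{n_i}$ with $d(x_i,\tilde{x}_i) < \delta$, using $x_i \in Y = \overline{\bigcup_n Y_n}$. Since $F$ is finite and $(Y_n)$ is increasing, some $N$ has $\tilde{x}_i \in Y_N$ for all $i \in F$; since $d(x,Y_n) \searrow d(x,Y) \leq r$, I may enlarge $N$ to also arrange $d(x,Y_N) \leq r+\delta$. The perturbed radii $(r_i+\delta)$ and $(r+\delta)$ satisfy
\[
d(\tilde{x}_i,\tilde{x}_j) \leq (r_i+\delta) + (r_j+\delta), \qquad d(\tilde{x}_i,x) \leq (r_i+\delta) + (r+\delta),
\]
so weak external hyperconvexity of $Y_N$ supplies some $y \in Y_N \cap B(x,r+\delta) \cap \bigcap_{i \in F} B(\tilde{x}_i,r_i+\delta)$. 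Then $y \in Y$ and $d(y,x_i) \leq (r_i+\delta) + \delta = r_i + \eps$, so $y \in K_F^\eps$. The main obstacle is the possibly infinite index set $I$: this is precisely where properness of $Y$ enters, through the finite-intersection-property argument of the outer layer; the slack calibration between the approximation $x_i \leadsto \tilde{x}_i$ and the WEH-radii used for $Y_N$ is the only delicate bookkeeping point of the inner layer.
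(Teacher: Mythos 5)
Your proof is correct and follows essentially the same strategy as the paper's: approximate the centers $x_i$ by nearby points lying in some $Y_N$, apply weak external hyperconvexity of $Y_N$ with slightly enlarged radii, and use properness of $Y$ to pass to the limit. The only difference is in packaging: you run the compactness step via the finite intersection property (over finite subfamilies $F$ and then over $\varepsilon \downarrow 0$), whereas the paper extracts convergent subsequences from the points produced for the increasing subfamilies $I_n = \{ i : y_i \in Y_n\}$; the two uses of properness are equivalent.
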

\begin{proof}
Consider a family $\{ (x_i,r_i ) \}_{i \in I}$ in $Y \times \R$ such that $d(x_i,x_j) \leq r_i + r_j$. Moreover, let $(x,r) \in X \times \R$ satisfy $d(x,Y) \le r$ and $d(x,x_i) \leq r + r_i$. 

There is a decreasing sequence $s_n \downarrow 0$ such that $d(x,Y_n) \leq r + s_n$. 
Now fix $\epsilon = \frac{1}{m} > 0$. Then for every $i \in I$ there is some $y_i \in B(x_i, \epsilon) \cap \bigcap_n Y_n$. For $n \in \mathbb{N}$ let
\[
I_n := \bigl \{ i \in I : y_i \in Y_n \bigr \}
\]
and since $Y_n$ is weakly externally hyperconvex there is some
\[ 
z_n \in Y_n \cap B(x,r + s_n) \cap \bigcap_{i \in I_n} B(y_i,r_i+\epsilon).
\]
Since $Y$ is proper and $(z_n) \subset Y \cap B(x,r+s_0)$, it follows that there is a convergent subsequence $z_{n_k} \to z^m \in Y \cap B(x,r) \cap \bigcap_{i \in I} B(x_i,r_i+\frac{1}{m})$. Moreover, since $Y$ is proper there is a subsequence $z^{m_l} \to z \in Y \cap B(x,r) \cap \bigcap_{i \in I} B(x_i,r_i)$. This proves that $Y$ is weakly externally hyperconvex in $X$.
\end{proof}

\begin{Lem}\label{lem:locally externally hyperconvex}
	Let $X$ be a hyperconvex metric space and $A \in \mathcal{W}(X)$. Assume that there is some $s > 0$ such that $A \in \mathcal{E}(B(A,s))$. Then $A \in \mathcal{E}(X)$.
\end{Lem}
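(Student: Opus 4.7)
The plan is to reduce to a pairwise statement inside $A$ and then handle pairs by bringing their centres into $B(A,s)$. Given $\{B(x_i,r_i)\}_{i \in I}$ satisfying the external-hyperconvexity hypotheses for $A$ in $X$, set $s_i := d(x_i,A) \le r_i$ and $P_i := B(x_i,s_i) \cap A$. By Lemma~\ref{lem:characterization WEH}, each $P_i \in \mathcal{E}(A)$ and $A \cap B(x_i,r_i) = B^A(P_i, r_i-s_i)$, which is again externally hyperconvex in $A$ by Lemma~\ref{lem:properties nbhds}. Since $A$ is hyperconvex, Proposition~\ref{prop:intersection of EH} applied inside $A$ reduces finding a common point of $\bigcap_i (A \cap B(x_i,r_i))$ to the pairwise intersection condition $d(P_i,P_j) \le (r_i-s_i) + (r_j-s_j)$ for every pair $(i,j)$.

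The base case of the pair statement is when $\max(s_i,s_j) \le s$, so both centres already lie in $B(A,s)$. Setting $\varepsilon := \max\{0, (d(x_i,x_j)-s_i-s_j)/2\}$, the balls $B(x_i, s_i+\varepsilon)$ and $B(x_j, s_j+\varepsilon)$ satisfy the hypotheses for $A \in \mathcal{E}(B(A,s))$ and contain a common point $a \in A$; by Lemma~\ref{lem:characterization WEH}(ii) this $a$ lies within distance $\varepsilon$ of both $P_i$ and $P_j$, yielding $d(P_i,P_j) \le 2\varepsilon \le (r_i-s_i)+(r_j-s_j)$. For $\max(s_i,s_j) > s$ I would iteratively reduce: when $s_j > s$ and $s_i \le s_j - s$, the set $Y := B(A,s_j - s)$ is weakly externally hyperconvex in $X$ by Lemma~\ref{lem:nbhd of WEH}, and applying this property with $x_j$ as external point to the balls $B(x_i, r_i + r_j - s)$ (centred in $Y$) and $B(x_j, s)$ produces a point $x_j^{(1)} \in Y \cap B(x_i, r_i+r_j-s) \cap B(x_j, s)$; setting $r_j^{(1)} := r_j - s$, the new pair $(x_i, x_j^{(1)})$ satisfies the hypotheses with a strictly smaller $s_j$, and $A \cap B(x_j^{(1)}, r_j^{(1)}) \subseteq A \cap B(x_j, r_j)$, so finitely many iterations bring us to the base case.

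The hardest step will be the remaining sub-case in which both $s_i$ and $s_j$ exceed $s$ while $|s_i - s_j| < s$, since there the one-sided reduction fails (the smaller centre no longer belongs to the target neighbourhood). My plan is to first perform a partial reduction of size $|s_i - s_j|$ to equalise the two distances (permissible because the smaller centre always lies in its own neighbourhood), and then to reduce both centres simultaneously by a small amount via an iterated application of the weak external hyperconvexity of $B(A, \min(s_i,s_j))$ combined with the hyperconvexity of $X$; correctly tracking the pairwise distance and proximity conditions through this alternating scheme, and ensuring termination, is the main technical difficulty of the argument.
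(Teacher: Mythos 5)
Your global architecture is sound and genuinely different from the paper's: you reduce to pairwise intersections inside $A$ via Proposition~\ref{prop:intersection of EH} (which matches the paper's final step), and your base case ($\max(s_i,s_j)\le s$) and your one-sided reduction (when $s_i \le s_j - s$, pulling $x_j$ into $B(A,s_j-s)$ while shrinking $r_j$ by $s$ and preserving $A \cap B(x_j^{(1)},r_j-s) \subset A \cap B(x_j,r_j)$) are both correct. However, the remaining case --- both centres far from $A$ with $|s_i-s_j|<s$ --- is not a routine technicality you can defer: it is exactly where the substance of the lemma lies, and your plan for it does not go through as described. If you move both centres by $\delta$ towards $A$ and shrink both radii by $\delta$ (which is what "tracking the pairwise distance and proximity conditions" so that the new $A$-intersection is contained in the old one forces you to do), you can only guarantee $d(x_i',x_j') \le d(x_i,x_j) + 2\delta$, while you now need $d(x_i',x_j') \le r_i+r_j-2\delta$; and you cannot steer $x_i'$ towards $x_j$ either, because weak external hyperconvexity of $B(A,t-\delta)$ only accommodates \emph{one} external centre, and both $x_i$ and $x_j$ lie outside it. So the invariant you are trying to maintain is unsalvageable, and "ensuring termination" is not the issue --- the single reduction step already fails.

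The way out, which is what the paper's proof does, is to abandon the containment $B(x_k',r_k') \subset B(x_k,r_k)$ altogether and instead prove the stronger statement $A \in \mathcal{E}(B(A,r))$ for every $r$, by the doubling induction $A \in \mathcal{E}(B(A,r)) \Rightarrow A \in \mathcal{E}(B(A,2r))$. There one retracts all centres via the $r$-constant retraction $\rho \colon B(A,2r) \to B(A,r)$ of Lemma~\ref{lem:retraction WEH} --- which is $1$-Lipschitz by construction, so $d(\rho(x_i),\rho(x_j)) \le r_i+r_j$ is preserved \emph{without} shrinking the radii --- finds $z \in A \cap \bigcap_i B(\rho(x_i),r_i)$, and then uses hyperconvexity of $X$ to push back to a point of $\bigcap_i B(x_i,r_i) \cap B(z,r) \subset \bigcap_i B(x_i,r_i)\cap B(A,r)$. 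This yields a point near $A$ rather than in $A$, and the final passage from $B(A,r)$ to $A$ is done by one more application of Proposition~\ref{prop:intersection of EH} to the pairwise intersecting family $\{A\} \cup \{B(x_i,r_i)\cap B(A,r)\}$ inside $B(A,r)$. If you want to keep your pairwise framework, you would have to import this same mechanism: retract, solve in the smaller neighbourhood, push back with hyperconvexity of $X$ to land in $B(A,\delta)\cap B(x_i,r_i)\cap B(x_j,r_j)$ with $\delta \le s$, and only then intersect with $A$ via Proposition~\ref{prop:intersection of EH} applied in $B(A,s)$. As written, your proposal is missing this idea, so the hard case is a genuine gap.
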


\begin{proof}
	First we show that $A \in \mathcal{E}(B(A,r))$ for any $r \geq 0$. By assumption this holds for $r=s > 0$. Therefore it is enough to prove $A \in \mathcal{E}(B(A,r)) \Rightarrow A \in \mathcal{E}(B(A,2r))$. Let $(x_i,r_i)_{i \in I} \in B(A,2r) \times \mathbb{R}_{\geq 0}$ with $d(x_i,x_j) \leq r_i + r_j$ and $d(x_i,A) \leq r_i$. Define $A_i = B(x_i,r_i) \cap B(A,r) \in \mathcal{E}(B(A,r))$. Clearly $A \cap A_i \neq \emptyset$.	
	By Lemma~\ref{lem:nbhd of WEH} $B(A,r) \in \mathcal{W}(X)$ and hence by Lemma~\ref{lem:retraction WEH} there is a retraction $\rho \colon B(A,2r) \to B(A,r)$ with $d(\rho(x),x) \leq r$. Set $y_i = \rho (x_i)$ and since $A \in \mathcal{E}(B(A,r))$ there is some $z \in A \cap \bigcap_i B(y_i,r_i)$ with $d(x_i,z) \leq r_i + r$. Therefore since $X$ is hyperconvex we get $\emptyset \neq B(z,r)\cap \bigcap_i B(x_i,r_i) \subset B(A,r)$. Especially $A_i\cap A_j = B(x_i,r_i) \cap B(x_j,r_j) \cap B(A,r) \neq \emptyset$ and hence $A \cap \bigcap_i B(x_i,r_i) = A \cap \bigcap_i A_i \neq \emptyset$ by Proposition~\ref{prop:intersection of EH}.
	
	To conclude that $A \in \mathcal{E}(X)$, let $(x_i,r_i)_{i \in I} \in X \times \mathbb{R}_{\geq 0}$ with $d(x_i,x_j) \leq r_i + r_j$ and $d(x_i,A) \leq r_i$. Define $A_i = B(x_i,r_i) \cap A \in \mathcal{E}(A)$. For fixed $i,j \in I$ we have $x_i,x_j \in B(A,r)$ for some $r \geq 0$ and hence by the first step we have $A_i \cap A_j = A \cap B(x_i,r_i) \cap B(x_j,r_j) \neq \emptyset$. Therefore we get $A \cap \bigcap_i B(x_i,r_i) = \bigcap_i A_i \neq \emptyset$.	
\end{proof}

\begin{Lem}\label{Lem:Products}
	Let $X = X^1 \times_\infty X^2$ be the product of two metric spaces with $d(x,y) = \max_{\lambda = 1,2} d_\lambda(x^\lambda,y^\lambda)$. Moreover let $A = A^1 \times A^2$ be a subset of $X$. Then the following holds:
	\begin{enumerate}[(i)]
	\item $B(x,r) = B^1(x^1,r) \times B^2(x^2,r)$,
	\item $A \in \mathcal{E}(X) \Leftrightarrow A^\lambda \in \mathcal{E}(X^\lambda)$ for $\lambda = 1,2$,	
	\item $A \in \mathcal{W}(X) \Leftrightarrow A^\lambda \in \mathcal{W}(X^\lambda)$ for $\lambda = 1,2$.
	\item $X$ is hyperconvex $\Leftrightarrow$ $X^\lambda$ is hyperconvex for $\lambda = 1,2$.
	\end{enumerate}
\end{Lem}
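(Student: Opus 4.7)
The plan is to extract (i) directly from the definition of the sup-metric, and then invoke (i) as a black box to reduce the remaining three statements to coordinate-wise arguments. Part (i) is immediate: $y \in B(x,r)$ iff $\max_{\lambda} d_\lambda(x^\lambda, y^\lambda) \le r$ iff $y^\lambda \in B^\lambda(x^\lambda,r)$ for each $\lambda$. Two consequences will be used repeatedly: the intersection formula
\[
	\bigcap_{i \in I} B(x_i,r_i) = \Big( \bigcap_{i \in I} B^1(x_i^1,r_i) \Big) \times \Big( \bigcap_{i \in I} B^2(x_i^2,r_i) \Big),
\]
and, for $A = A^1 \times A^2$, the identity $d(x,A) = \max_{\lambda = 1,2} d_\lambda(x^\lambda, A^\lambda)$, which holds because the two factor distances are independently minimised by choosing $a^\lambda$ separately.

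For (iv), the direction "$\Leftarrow$" takes an admissible family $\{B(x_i,r_i)\}$ in $X$ and projects; since $d_\lambda(x_i^\lambda, x_j^\lambda) \le d(x_i,x_j) \le r_i + r_j$, hyperconvexity of each $X^\lambda$ supplies a point in the coordinate intersection, and (i) reassembles a point in $\bigcap_i B(x_i,r_i)$. For "$\Rightarrow$", given an admissible family $\{B^1(x_i^1,r_i)\}$ in $X^1$, one fixes any $x_0^2 \in X^2$ and applies hyperconvexity of $X$ to the lifted family with centres $(x_i^1,x_0^2)$; the first coordinate of any solution lies in the desired intersection. Parts (ii) and (iii) follow the same template. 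The "$\Leftarrow$" direction in each reduces to projecting the ball-intersection hypotheses onto each factor via the identities above and recombining a product solution, while "$\Rightarrow$" proceeds by lifting the data from $X^1$ to $X$ with a carefully chosen basepoint in the second coordinate and then projecting a solution back.

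The only subtlety — and the step I would flag as the main obstacle, modest as it is — is this basepoint choice in the reverse directions of (ii) and (iii). Because $d(x,A)$ is the \emph{maximum} of the two coordinate distances, taking an arbitrary $x_0^2 \in X^2$ when lifting could inflate $d(x,A)$ beyond $r$ and invalidate the hypothesis governing $A$. The fix is to lift using some $a_0^2 \in A^2$ instead, which is available since $A = A^1 \times A^2$ is non-empty: this choice makes $d(x,A)$ and $d(x_i,A)$ coincide with their first-coordinate values and thus preserves every hypothesis upon lifting. Everything else is bookkeeping with the product-of-balls formula from (i).
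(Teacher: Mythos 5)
Your proof is correct and follows essentially the same route as the paper's: project for one direction, lift with a basepoint chosen in $A^2$ for the other, and the subtlety you flag (an arbitrary second coordinate could violate the distance hypotheses) is exactly the point the paper handles by fixing $x^2 \in A^2$. The only cosmetic difference is that the paper obtains (iv) as the special case $A^\lambda = X^\lambda$ of (ii) rather than arguing it directly.
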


\begin{proof}
	Property (i) follows from the fact that $d(x,y) \leq r$ if and only if $d_\lambda(x^\lambda,y^\lambda) \leq r$ for $\lambda = 1,2$. For (ii) let first $x_i \in X$ be any collection of points with $d(x_i,x_j) \leq r_i + r_j$ and $d(x_i,A) \leq r_i$. Then $d(x_i^\lambda,x_j^\lambda) \leq d(x_i,x_j) \leq r_i + r_j$ and $d(x_i^\lambda,A^\lambda) \leq d(x_i,A) \leq r_i$ and therefore if $A^\lambda \in \mathcal{E}(X^\lambda)$ there is some $y^\lambda \in A^\lambda \cap \bigcap_i B^\lambda(x_i^\lambda,r_i)$ and hence $y=(y^1,y^2) \in A \cap \bigcap_i B(x_i,r_i)$. For the converse if $d(x_i^1,x_j^1) \leq r_i + r_j$ and $d(x_i^1,A^1) \leq r_i$ fix some $x^2 \in A^2$. Then the points $x_i=(x_i^1,x^2)$ fulfill $d(x_i,x_j) \leq r_i + r_j$ and $d(x_i,A) \leq r_i$, i.e. there is some $y = (y^1,y^2) \in A \cap \bigcap_i B(x_i,r_i)$ and hence $y^1 \in A^1 \cap \bigcap_i B^1(x_i^1,r_i) \neq \emptyset$. The proof of (iii) is similar and (iv) follows from (ii) by setting $A^\lambda = X^\lambda$.
\end{proof}


\section{Gluing along Weakly Externally Hyperconvex Subspaces}


\begin{Def}\label{def:gluing}
Let $(X_\lambda,d_\lambda)_{\lambda\in\Lambda}$ be a family of metric spaces with closed subsets $A_\lambda\subset X_\lambda$. Suppose that all $A_\lambda$ are isometric to some metric space $A$. For every $\lambda\in\Lambda$ fix an isometry $\varphi_\lambda \colon A \to A_\lambda$. We define an equivalence relation on the disjoint union $\bigsqcup_\lambda X_\lambda$ generated by $\varphi_\lambda(a) \sim \varphi_{\lambda'}(a)$ for $a \in A$. The resulting space $X := (\bigsqcup_\lambda X_\lambda)/\sim$ is called the \emph{gluing} of the $X_\lambda$ along $A$.
\end{Def}

$X$ admits a natural metric. For $x \in X_\lambda$ and $y\in X_{\lambda'}$ it is given by
	\begin{equation}
		d(x,y) = \begin{cases}
				d_\lambda(x,y), &\text{ if } \lambda=\lambda', \\
				\inf_{a \in A} \{ d_\lambda(x,\varphi_\lambda(a)) + d_{\lambda'}(\varphi_{\lambda'}(a),y) \}, &\text{ if } \lambda \neq \lambda'.
				\end{cases}
	\end{equation}
For more details see for instance \cite[Lemma~I.5.24]{Bri}.

If there is no ambiguity, indices for $d_\lambda$ are dropped and the sets $A_\lambda=\varphi_\lambda(A) \subset X_\lambda$ are identified with $A$.

Balls inside the subset $X_\lambda$ are denoted by $B^\lambda(x,r)$.

\begin{Lem}\label{lem:list properties gluing}
Let $X$ be a hyperconvex metric space obtained by gluing a family of hyperconvex metric spaces $(X_\lambda,d_\lambda)_{\lambda \in \Lambda}$ along some set $A$. Then $A$ is hyperconvex.
\end{Lem}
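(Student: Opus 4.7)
The plan is to argue by contradiction through Isbell's injective hull $\E A$, using the standard characterization that $A$ is hyperconvex if and only if $\E A = A$. I will assume there exists $y \in \E A \sm A$ and derive a contradiction from the hyperconvexity of $X$; the argument uses implicitly that $|\Lam|\ge 2$, which is needed for the conclusion since the one-piece gluing $X = X_{\lam_0}$ does not in general force $A$ to be hyperconvex.

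First, the setup. Write $f(a) := d_{\E A}(y, a)$ for the Isbell function of $y$, and set $f_0 := \inf_{a \in A} f(a) = d_{\E A}(y, A)$. Since each $X_\lam$ is hyperconvex and hence injective, the inclusion $A \hookrightarrow X_\lam$ extends to an isometric embedding $\E A \hookrightarrow X_\lam$, producing a point $y_\lam \in X_\lam$ with $d_\lam(y_\lam, a) = f(a)$ for every $a \in A$. Each $y_\lam$ must lie in $X_\lam \sm A$: otherwise $y_\lam = a_0 \in A$ would give $f = d(a_0, \cdot)$ and hence $y = a_0 \in A$. Moreover $f_0 > 0$, since $A$ is closed in the complete space $X_\lam$ and therefore also in $\E A$. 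The glued-distance formula then yields, for $\lam \ne \mu$,
\[
d_X(y_\lam, y_\mu) = \inf_{a \in A}\bigl(f(a) + f(a)\bigr) = 2 f_0.
\]

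Next, I apply hyperconvexity of $X$ to the ball family
\[
\{B^X(a, f(a))\}_{a \in A} \cup \{B^X(y_\sig, f_0)\}_{\sig \in \Lam},
\]
which is pairwise compatible: the Isbell inequality $d(a,b) \le f(a) + f(b)$ handles the first block, the equality $d_X(y_\sig, y_\tau) = 2 f_0 = f_0 + f_0$ handles the second, and $d(a, y_\sig) = f(a) \le f(a) + f_0$ handles the cross pairs. Let $v$ be a common point.

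Finally, I eliminate both possibilities for the location of $v$. If $v \in A$, then $d(v, \cdot) \le f$ pointwise on $A$; as $d(v, \cdot)$ is itself an Isbell function on $A$ and $f$ is minimal as an element of $\E A$, this forces $d(v, \cdot) = f$, placing $y = v$ in $A$ and contradicting the choice of $y$. If instead $v \in X_\sig \sm A$ for some $\sig$, I pick any $\tau \ne \sig$, whereupon
\[
d_X(v, y_\tau) = \inf_{a \in A}\bigl(d(v, a) + f(a)\bigr) \ge d(v, A) + f_0 > f_0,
\]
contradicting $v \in B^X(y_\tau, f_0)$. The main technical obstacle is assembling the injective-hull machinery correctly, in particular the minimality characterization of elements of $\E A$; once this is in place, both cases reduce to direct computation with the glued metric.
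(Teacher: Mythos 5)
Your proof is correct, but it takes a genuinely different route from the paper's. The paper argues directly: given a compatible family of balls centered in $A$, hyperconvexity of each piece yields a point $y_\lambda\in\bigcap_i B(x_i,r_i)\cap X_\lambda$; since $\bigcap_i B(x_i,r_i)$ is an admissible subset of the hyperconvex space $X$ it is itself hyperconvex, hence path-connected, and a path joining $y_\lambda$ to $y_{\lambda'}$ for $\lambda\ne\lambda'$ must cross $A$ because $X\sm A$ is the disjoint union of the open sets $X_\lambda\sm A$. That is a three-line argument using only the connectivity of admissible sets. You instead run the contradiction through Isbell's hull: you realize a putative extremal function $f=d_{\E A}(y,\cdot)\notin e(A)$ by points $y_\lambda\in X_\lambda\sm A$ in every piece, verify the glued distance $d_X(y_\lambda,y_\mu)=2f_0$, and show the resulting compatible ball family $\{B(a,f(a))\}_{a\in A}\cup\{B(y_\sigma,f_0)\}_\sigma$ can have no common point, the two cases being excluded by minimality of $f$ in $\D{A}$ and by $d_X(v,y_\tau)\ge d(v,A)+f_0>f_0$ respectively. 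All steps check out: the essential-extension property gives the isometric copies of $\E A$ in each $X_\lambda$ (alternatively, hyperconvexity of $X_\lambda$ plus minimality of $f$ already produces $y_\lambda$ without invoking the embedding), closedness of $A$ gives $f_0>0$ and $d(v,A)>0$, and the estimate $\inf_a(d(v,a)+f(a))\ge d(v,A)+f_0$ is valid. Your approach costs the full injective-hull formalism where the paper needs none, but it buys a sharper picture of the obstruction — it exhibits the specific extremal function that $A$ fails to realize — and it makes explicit, as the paper does not, that the statement tacitly requires $|\Lam|\ge 2$ (for $|\Lam|=1$ the "gluing" is just $X_1$ and $A$ is an arbitrary closed subset). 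Both proofs also tacitly use $A\ne\es$, without which the glued distance is not defined.
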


\begin{proof}
	Let $x_i \in A$ such that $d(x_i,x_j) \leq r_i +r_j$. Then for each $\lambda$, since $X_\lambda$ is hyperconvex there is some $y_\lambda \in \bigcap B(x_i,r_i) \cap X_\lambda$. Moreover $\bigcap B(x_i,r_i)$ is path-connected and a path from $y_\lambda$ to $y_{\lambda'}$ must intersect $A$, i.e. $\bigcap B(x_i,r_i)\cap A \neq \emptyset$.
\end{proof}

In general we cannot say more about necessary conditions on $A$ such that the gluing along $A$ is hyperconvex. For instance gluing a hyperconvex space $X$ and any hyperconvex subset $A \in \mathcal{H}(X)$ along $A$ the resulting space is isometric to $X$ and therefore hyperconvex. But there are also plenty of non-trivial examples.

\begin{Expl}
	Let $f \colon \mathbb{R} \to \mathbb{R}$ be any 1-lipschitz function. Consider its graph $A = \{ (x,y) \in l_\infty^2 : y=f(x)\}$ and the two sets $X_1 = \{ (x,y) \in l_\infty^2 : y \leq f(x)\}$, $X_2 = \{ (x,y) \in l_\infty^2 : y \geq f(x)\}$. Then $l_\infty^2 = X_1 \sqcup_A X_2$ is hyperconvex and occurs as the gluing of two hyperconvex spaces $X_1,X_2$.
\end{Expl}

But if we assume that the gluing set is weakly externally hyperconvex, we can do better.

\begin{Lem}\label{lem:distance}
	Let $(X,d)$ be the metric space obtained by gluing a family of hyperconvex metric spaces $(X_\lambda,d_\lambda)_{\lambda \in \Lambda}$ along some set $A$ such that $A$ is weakly externally hyperconvex in $X_\lambda$ for each $\lambda$. For $x\in X_\lambda$ and $x'\in X_{\lambda'}$, there are then points $a\in B(x,d(x,A))\cap A$ and $a' \in B(x',d(x',A))\cap A$ such that $$d(x,x')=d(x,a)+d(a,a')+d(a',x').$$
\end{Lem}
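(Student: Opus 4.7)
The plan is to exploit the explicit formula for the gluing metric when $\lambda\ne\lambda'$, namely
\[
d(x,x')=\inf_{b\in A}\bigl[d_\lambda(x,b)+d_{\lambda'}(b,x')\bigr],
\]
and to show that this infimum equals $s+s'+\dis\bigl(B(x,s)\cap A,\, B(x',s')\cap A\bigr)$, with $s:=d(x,A)$ and $s':=d(x',A)$. Once that identity is established, Corollary~\ref{cor:admitting distance} will promote the infimum to an attained minimum, producing the desired pair $(a,a')$.

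First I would fix any $b\in A$ and apply Lemma~\ref{lem:characterization WEH}$(ii)$ inside $X_\lambda$ to the pair $(x,b)$: this yields $a_b\in B(x,s)\cap A$ with $d_\lambda(x,b)=d(x,a_b)+d(a_b,b)=s+d(a_b,b)$, where the last equality uses $a_b\in A$ and hence $d(x,a_b)\ge s$. The same reasoning inside $X_{\lambda'}$ applied to $(x',b)$ produces $a'_b\in B(x',s')\cap A$ with $d_{\lambda'}(x',b)=s'+d(a'_b,b)$. Adding the two identities and using $d(a_b,b)+d(a'_b,b)\ge d(a_b,a'_b)\ge\dis\bigl(B(x,s)\cap A,\, B(x',s')\cap A\bigr)$ gives
\[
d_\lambda(x,b)+d_{\lambda'}(b,x')\ge s+s'+\dis\bigl(B(x,s)\cap A,\, B(x',s')\cap A\bigr),
\]
and infimizing over $b\in A$ delivers the $\ge$ bound on $d(x,x')$. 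The matching upper bound is immediate: for any $a\in B(x,s)\cap A$ and $a'\in B(x',s')\cap A$ one has $d(x,a)=s$, $d(x',a')=s'$ (the inclusions force these equalities), and the triangle inequality gives $d(x,x')\le d(x,a)+d(a,a')+d(a',x')=s+d(a,a')+s'$; infimizing over $(a,a')$ completes the desired equality.

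Now for the realization step. Since $A\in\mathcal{W}(X_\lambda)$, the space $A$ is itself hyperconvex, and by Lemma~\ref{lem:characterization WEH}$(i)$ both $B(x,s)\cap A$ and $B(x',s')\cap A$ belong to $\mathcal{E}(A)$. Applying Corollary~\ref{cor:admitting distance} inside the hyperconvex space $A$ (using $\mathcal{E}(A)\subset\mathcal{W}(A)$ to match the hypotheses) produces $a\in B(x,s)\cap A$ and $a'\in B(x',s')\cap A$ with $d(a,a')=\dis\bigl(B(x,s)\cap A,\, B(x',s')\cap A\bigr)$. Combining with the equality from the previous paragraph yields
\[
d(x,a)+d(a,a')+d(a',x')=s+d(a,a')+s'=d(x,x'),
\]
as required.

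The main obstacle is precisely this attainment: the defining infimum in the gluing metric need not be realized at a single $b\in A$ in the absence of compactness. Corollary~\ref{cor:admitting distance} circumvents this by recasting the single infimum as the distance between two externally hyperconvex subsets of the hyperconvex space $A$, where realization is guaranteed by the intersection theory of Section~2.
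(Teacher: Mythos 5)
Your proposal is correct and follows essentially the same route as the paper: reduce the infimum defining the gluing metric to $d(x,A)+d(x',A)$ plus the distance between the two gate sets $B(x,d(x,A))\cap A$ and $B(x',d(x',A))\cap A$ via Lemma~\ref{lem:characterization WEH}$(ii)$, then realize that distance using Corollary~\ref{cor:admitting distance} applied inside the hyperconvex space $A$. You merely spell out the two inequalities that the paper compresses into a single ``Hence''.
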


\begin{proof}
	Since $A$ is weakly externally hyperconvex in each $X_\lambda$, by Lemma \ref{lem:characterization WEH} $(ii)$, there are for every $y \in A$, points $a\in B(x,d(x,A))\cap A$ and $a' \in B(x',d(x',A))\cap A$ such that both $d(x,y)=d(x,a)+d(a,y)$ and $d(y,x')=d(y,a')+d(a',x')$ hold. Hence,
\[
	d(x,x') = d(x,A) + d(B(x,d(x,A))\cap A,B(x',d(x',A))\cap A)+d(x',A).
\]
	But the sets $B(x,d(x,A))\cap A$ and $B(x',d(x',A))\cap A$ are externally hyperconvex in $A$ and therefore by Lemma~\ref{cor:admitting distance} there are $a,a' \in A$ with
\[
	d(a,a') = d(B(x,d(x,A))\cap A,B(x',d(x',A))\cap A).
\]
\end{proof}

\begin{Lem}\label{lem:balls}
	Let $(X,d)$ be the metric space obtained by gluing a family of hyperconvex metric spaces $(X_\lambda,d_\lambda)_{\lambda \in \Lambda}$ along some set $A$ such that $A$ is weakly externally hyperconvex in $X_\lambda$ for each $\lambda$. Then, for $\lambda\neq\lambda'$, $x\in X_\lambda$ and $r \geq s:=d(x,A)$, one has
\[
	B(x,r) \cap X_{\lambda'} = B^{\lambda'}(B^\lambda(x,s)\cap A,r-s).
\]
	Therefore, if $B^\lambda(x,s)\cap A$ is externally hyperconvex in $X_{\lambda'}$, then so is $B(x,r) \cap X_{\lambda'}$.
\end{Lem}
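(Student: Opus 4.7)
The plan is to establish the claimed set equality by proving both inclusions, and then to deduce the external hyperconvexity statement directly from Lemma \ref{lem:properties nbhds}(ii) applied inside the hyperconvex space $X_{\lambda'}$.

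For the inclusion $\supseteq$, I would take $y$ in the right-hand side and pick $a \in B^\lambda(x,s) \cap A$ with $d^{\lambda'}(a,y) \le r - s$. Since $A$ is identified with its image in $X_{\lambda'}$, the triangle inequality for the gluing metric gives $d(x,y) \le d(x,a) + d(a,y) \le s + (r-s) = r$, so $y \in B(x,r) \cap X_{\lambda'}$.

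For the reverse inclusion $\subseteq$, which is the main content, I would invoke Lemma \ref{lem:distance}. Given $y \in B(x,r) \cap X_{\lambda'}$, that lemma produces $a \in B^\lambda(x,s) \cap A$ and $a' \in B^{\lambda'}(y,d(y,A)) \cap A$ with $d(x,y) = d(x,a) + d(a,a') + d(a',y)$. Since $a \in A$ forces $d(x,a) \ge d(x,A) = s$, and membership of $a$ in $B^\lambda(x,s)$ forces $d(x,a) \le s$, one gets $d(x,a) = s$, and therefore $d(a,a') + d(a',y) \le r-s$. The triangle inequality in $X_{\lambda'}$ then yields $d^{\lambda'}(a,y) \le r-s$, placing $y$ in the $(r-s)$-neighborhood of $B^\lambda(x,s) \cap A$ computed inside $X_{\lambda'}$. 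The only subtlety is to notice that the decomposition furnished by Lemma \ref{lem:distance} involves points of $A$, whose gluing distance agrees with both $d^\lambda$ and $d^{\lambda'}$ by Definition \ref{def:gluing}; this causes no trouble.

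Finally, under the additional hypothesis that $B^\lambda(x,s) \cap A$ is externally hyperconvex in $X_{\lambda'}$, I would apply Lemma \ref{lem:properties nbhds}(ii) inside the hyperconvex space $X_{\lambda'}$ to conclude that the closed $(r-s)$-neighborhood $B^{\lambda'}(B^\lambda(x,s) \cap A,\, r-s)$ remains externally hyperconvex in $X_{\lambda'}$. By the set equality just established, this is exactly $B(x,r) \cap X_{\lambda'}$, completing the proof. The argument is short; the main hurdle is really just correctly harvesting the additive decomposition from Lemma \ref{lem:distance} and noticing that $d(x,a)$ must equal $s$ exactly.
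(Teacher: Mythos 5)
Your proof is correct and follows essentially the same route as the paper: the nontrivial inclusion is harvested from the additive decomposition of Lemma~\ref{lem:distance} together with the observation that $d(x,a)=s$, and the external hyperconvexity claim then follows from Lemma~\ref{lem:properties nbhds}. The paper's version is merely terser, leaving the easy reverse inclusion and the equality $d(x,a)=s$ implicit.
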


\begin{proof}
	Let $x' \in B(x,r) \cap X_{\lambda'}$. By Lemma~\ref{lem:distance}, there is some $a \in B^\lambda(x,s)\cap A$ with $d(x,x')=d(x,a)+d(a,x')$ . We have $d(a,x') \leq r-s$ and hence
\[
	x' \in B^{\lambda'}(B^\lambda(x,s)\cap A,r-s).
\]
\end{proof}

\begin{Prop}\label{prop:externally hyperconvex balls}
	Let $(X,d)$ be the metric space obtained by gluing a family of hyperconvex metric spaces $(X_\lambda,d_\lambda)_{\lambda \in \Lambda}$ along some set $A$ such that $A$ is weakly externally hyperconvex in $X_\lambda$ for each $\lambda$. If $X$ is hyperconvex then for all $\lambda \in \Lambda$ and all $x \in X \setminus X_\lambda$ the set $B(x,d(x,A))\cap A$ is externally hyperconvex in $X_\lambda$.
\end{Prop}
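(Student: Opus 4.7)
The plan is to find the required point inside $A$ by intersecting $B := B(x,s) \cap A$ with neighbourhoods of the ``projections'' of the $y_i$ onto $A$, and then invoke the intersection property for externally hyperconvex subsets of $A$. Fix $x \in X \setminus X_\lambda$, say $x \in X_{\lambda''}$ with $\lambda'' \neq \lambda$, and write $s = d(x,A)$. Given $\{y_i\}_{i \in I} \subset X_\lambda$ with $d(y_i,y_j) \leq r_i + r_j$ and $d(y_i,B) \leq r_i$, set $t_i := d(y_i,A)$ and $P_i := B^\lambda(y_i,t_i) \cap A$. Since $A \in \mathcal{W}(X_\lambda) \cap \mathcal{W}(X_{\lambda''})$, Lemma~\ref{lem:characterization WEH}(i) applied in both $X_\lambda$ and $X_{\lambda''}$ yields $P_i, B \in \mathcal{E}(A)$, and Lemma~\ref{lem:subset of WEH} further gives $B \in \mathcal{W}(X_\lambda)$, so $B$ is proximinal in $X_\lambda$. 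The key identity, a consequence of Lemma~\ref{lem:characterization WEH}(ii) applied in $X_\lambda$, is
\[
	d(y_i, c) = t_i + d(c, P_i) \quad \text{for every } c \in A.
\]

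It then suffices to produce $a \in B \cap \bigcap_{i \in I} B^A(P_i, r_i - t_i)$: by the identity, such an $a$ automatically satisfies $d(y_i,a) \leq r_i$. The space $A$ is hyperconvex by Lemma~\ref{lem:list properties gluing}, $B$ has diameter at most $2s$, and each $B^A(P_i, r_i - t_i)$ belongs to $\mathcal{E}(A)$ by Lemma~\ref{lem:properties nbhds}(ii), so Proposition~\ref{prop:intersection of EH} will deliver $a$ once pairwise intersections are verified. The intersection $B \cap B^A(P_i, r_i - t_i)$ is straightforward: by proximinality, pick $b^{(i)} \in B$ with $d(y_i, b^{(i)}) \leq r_i$, and the identity then yields $d(b^{(i)}, P_i) \leq r_i - t_i$.

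The main obstacle is the pairwise intersection $B^A(P_i, r_i - t_i) \cap B^A(P_j, r_j - t_j) \neq \emptyset$, equivalently the bound $d(P_i, P_j) \leq (r_i - t_i) + (r_j - t_j)$; the compatibility $d(y_i, y_j) \leq r_i + r_j$ by itself yields only the opposite estimate $d(P_i, P_j) \geq d(y_i, y_j) - t_i - t_j$, so the global hyperconvexity of $X$ must be invoked. Applying it to the family $\{B(y_i, r_i)\}_{i \in I} \cup \{B(x, s)\}$, pairwise compatible since each $b^{(i)}$ above witnesses $d(x, y_i) \leq s + r_i$, produces $z \in X$ with $d(x, z) \leq s$ and $d(y_i, z) \leq r_i$. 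The gluing distance formula forces $B(x, s) \subset X_{\lambda''}$, since any $z' \in X_\mu$ with $\mu \neq \lambda''$ satisfies $d(x, z') \geq s + d(z', A)$; hence $z \in X_{\lambda''}$. The analogue of the key identity for $z \in X_{\lambda''}$ and $y_i \in X_\lambda$, obtained by combining the gluing formula with the identity itself, reads $d(z, y_i) = t_i + d(z, P_i)$, yielding $d(z, P_i) \leq r_i - t_i$; the triangle inequality then gives $d(P_i, P_j) \leq d(z, P_i) + d(z, P_j) \leq (r_i - t_i) + (r_j - t_j)$, closing the pairwise intersection step and completing the proof via Proposition~\ref{prop:intersection of EH}.
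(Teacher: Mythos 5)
Your argument is correct, and its first half coincides with the paper's: both proofs hinge on applying hyperconvexity of $X$ to the family $\{B(y_i,r_i)\}_{i\in I}\cup\{B(x,s)\}$ (after checking $d(x,y_i)\le s+r_i$ via a point of $B$ close to $y_i$) to obtain a point $z\in B(x,s)\subset X_{\lambda''}$ with $d(z,y_i)\le r_i$ for all $i$. Where you diverge is in how $z$ is converted into a point of $A$. The paper picks, for each $i$, a single gate point of $P_i=B(y_i,d(y_i,A))\cap A$ lying on a geodesic from $z$ to $y_i$ (via Lemma~\ref{lem:distance}) and then applies weak external hyperconvexity of $A$ in $X_{\lambda''}$ once more, with $x$ as the external point and the gates as centers; you instead keep the full gate sets $P_i$, prove the identity $d(y_i,c)=t_i+d(c,P_i)$ for $c\in A$, and finish with the Helly-type Proposition~\ref{prop:intersection of EH} inside the hyperconvex space $A$. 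Both routes are legitimate; the paper's finish is shorter, while yours isolates a reusable identity and reduces everything to pairwise conditions. One step you should justify explicitly: the claimed equivalence of $B^A(P_i,r_i-t_i)\cap B^A(P_j,r_j-t_j)\neq\emptyset$ with $d(P_i,P_j)\le(r_i-t_i)+(r_j-t_j)$ is not automatic in a general metric space. Here it does hold: Corollary~\ref{cor:admitting distance}, applied inside $A$ with $P_i$ bounded and $P_i,P_j\in\mathcal{E}(A)$, shows the distance is attained by some $p\in P_i$, $q\in P_j$, and metric convexity of the hyperconvex space $A$ then yields a point $c$ with $d(c,p)\le r_i-t_i$ and $d(c,q)\le r_j-t_j$. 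With that line added, the proof is complete.
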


\begin{proof}
	Set $s:=d(x,A)$ and let $\{x_i\}_{i \in I}$ be a collection of point in $X_\lambda$ and $\{r_i\}_{i \in I}$ such that $d(x_i,x_j) \leq r_i+r_j$ and $d(x_i, B(x,s)\cap A) \leq r_i$. Then, by hyperconvexity of $X$ there is some $y \in B(x,s) \cap \bigcap B(x_i,r_i)$. Since $y\in B(x,s)$, we have $y\in X_{\lambda'}$ for some $\lambda' \neq \lambda$. Therefore, by Lemma~\ref{lem:distance} there is for each $i$ some $y_i \in B(x_i,d(x_i,A))\cap A$ with $d(y,x_i)=d(y,y_i)+d(y_i,x_i)$. Define $r_i' = r_i-d(y_i,x_i)$. We have $d(y_i,y_j) \leq d(y_i,y)+d(y,y_j) \leq r_i' + r_j'$ and $d(x,y_i) \leq s+r_i'$. Hence, since $A$ is weakly externally hyperconvex in $X_{\lambda'}$, there is some $z \in \bigcap_i B(y_i,r_i') \cap B(x,s) \cap A$ and thus $\bigcap_i B(x_i,r_i) \cap B(x,s) \cap A \neq \emptyset$.
\end{proof}

\begin{Prop}\label{prop:gluing along WEH}
	Let $(X,d)$ be the metric space obtained by gluing a family of hyperconvex metric spaces $(X_\lambda,d_\lambda)_{\lambda \in \Lambda}$ along some set $A$ such that for each $\lambda$ the set $A$ is weakly externally hyperconvex in $X_\lambda$ and for any $x\in X \setminus X_\lambda$ the intersection $B(x,d(x,A))\cap A$ is externally hyperconvex in $X_\lambda$. Then $X$ is hyperconvex and $X_{\lambda} \in \mathcal{W}(X)$ for every $\lambda$.
\end{Prop}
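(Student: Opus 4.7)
The plan is to verify both conclusions by reducing an arbitrary compatible ball family to one inside a single sheet $X_{\lambda_0}$, where Proposition~\ref{prop:intersection of EH} can be invoked.

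Given $\{B(x_i,r_i)\}_{i \in I}$ in $X$ with $d(x_i,x_j) \le r_i + r_j$, write $s_i := d(x_i,A)$ and $E_i := B(x_i,s_i) \cap A$. I first select the sheet $X_{\lambda_0}$: if some $r_i < s_i$ then $B(x_i,r_i)$ is trapped in its own sheet, and the estimate $s_i + s_j \le d(x_i,x_j) \le r_i + r_j$ from Lemma~\ref{lem:distance} forces all such indices into one common sheet, which is then taken as $\lambda_0$; otherwise $\lambda_0$ is chosen freely. In either case every ball meets $X_{\lambda_0}$, and setting $C_i := B(x_i,r_i) \cap X_{\lambda_0}$, Lemma~\ref{lem:balls} combined with the hypothesis on $B(x,d(x,A)) \cap A$ and Lemma~\ref{lem:properties nbhds} shows $C_i \in \mathcal{E}(X_{\lambda_0})$.

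For pairwise intersection $C_i \cap C_j$ I split by sheets. When both $x_i, x_j$ lie in $X_{\lambda_0}$ hyperconvexity of $X_{\lambda_0}$ closes it. When they are in distinct sheets with at least one distinct from $\lambda_0$, Lemma~\ref{lem:distance} supplies $a \in E_i$ and $a' \in E_j$ with $d(a,a') \le r_i + r_j - s_i - s_j$, and hyperconvexity of $X_{\lambda_0}$ then yields $z \in C_i \cap C_j$. The delicate case is $x_i, x_j \in X_\mu$ for a common $\mu \ne \lambda_0$: if $\lambda_0$ was chosen freely I simply reselect $\lambda_0 = \mu$, falling back to the easy case; if $\lambda_0$ is forced then some $x_k \in X_{\lambda_0}$ satisfies $r_k < s_k$, and Lemma~\ref{lem:distance} applied to $(x_k,x_i)$ and $(x_k,x_j)$ gives $d(E_k, E_i) \le r_i + r_k - s_i - s_k$ and $d(E_k, E_j) \le r_j + r_k - s_j - s_k$. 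The hypothesis that $E_k \in \mathcal{E}(X_\mu)$, together with the strict slack $s_k - r_k > 0$ produced by the forcing, enables a triangulation through $E_k$ in the hyperconvex space $A$ yielding $d(E_i,E_j) \le r_i + r_j - s_i - s_j$, and hence $C_i \cap C_j \ne \emptyset$. Proposition~\ref{prop:intersection of EH} then produces $\bigcap_i C_i \ne \emptyset$, establishing hyperconvexity of $X$.

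For $X_\lambda \in \mathcal{W}(X)$ the plan is structurally the same: an external point $x \in X$ with $d(x, X_\lambda) \le r$ and $d(x,x_i) \le r + r_i$ contributes one extra set $B(x,r) \cap X_\lambda$, which is either an admissible ball in $X_\lambda$ (if $x \in X_\lambda$) or, by Lemma~\ref{lem:balls} and the hypothesis, the neighborhood of the externally hyperconvex set $B(x,d(x,A)) \cap A$; the pairwise intersection analysis runs verbatim and Proposition~\ref{prop:intersection of EH} finishes. The main technical obstacle I foresee is precisely the forced subcase of Case D just described, where the full external hyperconvexity of $B(x,d(x,A)) \cap A$ in each $X_\lambda$, rather than only the weak external hyperconvexity of $A$, is what makes the triangulation through $E_k$ go through and bridges the failure of the identity $d(x_i,x_j) = s_i + d(E_i,E_j) + s_j$ inside a single sheet.
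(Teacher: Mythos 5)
Your skeleton (reduce everything to a single sheet $X_{\lambda_0}$, show the sets $C_i = B(x_i,r_i)\cap X_{\lambda_0}$ are pairwise intersecting members of $\mathcal{E}(X_{\lambda_0})$ via Lemma~\ref{lem:balls}, then invoke Proposition~\ref{prop:intersection of EH}) matches the paper's, and your ``forced'' sub-case is essentially sound: if $r_k < s_k$ for some $x_k \in X_{\lambda_0}$ and $x_i,x_j \in X_\mu \ne X_{\lambda_0}$, then Lemma~\ref{lem:distance} gives $d(x_i,E_k) \le d(x_i,x_k)-s_k \le r_i$ and likewise for $j$, so external hyperconvexity of $E_k$ in $X_\mu$ directly yields a point of $E_k \cap B(x_i,r_i)\cap B(x_j,r_j) \subset A$; note that what does the work here is not the ``strict slack'' feeding a distance triangulation but the Helly-type property of $E_k \in \mathcal{E}(X_\mu)$ producing one point simultaneously close to $x_i$ and $x_j$. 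A literal estimate $d(E_i,E_j) \le d(E_i,E_k) + d(E_k,E_j)$ does not give the required bound without that step, so you should make it explicit.

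The genuine gap is in your other branch. Your dichotomy (``some $r_i<s_i$'' versus ``$\lambda_0$ chosen freely'') is not the relevant one: the hard configuration is when every ball reaches $A$ but two balls centered in a \emph{common} sheet $X_{\lambda_0}$ satisfy $B(x_{i_0},r_{i_0})\cap B(x_{j_0},r_{j_0})\cap A = \emptyset$. This really occurs under the hypotheses --- e.g.\ two copies of $l_\infty^2$ glued along the diagonal, with $x_{i_0}=(2,0)$, $x_{j_0}=(0,-2)$, $r_{i_0}=r_{j_0}=1$: both balls meet the diagonal, their intersection $\{(1,-1)\}$ does not. Here $\lambda_0$ is forced by this pair even though no $r_i<s_i$, and for a second pair $x_i,x_j$ lying in another sheet $X_\mu$ you must still prove $B(x_i,r_i)\cap B(x_j,r_j)\cap X_{\lambda_0}\ne\emptyset$, i.e.\ that these balls meet $A$. ``Reselecting $\lambda_0=\mu$'' is not available: the choice of $\lambda_0$ must be global (the whole family $\{C_i\}$ lives in one sheet), and reselecting would just move the failure to the pair $(i_0,j_0)$, whose balls by assumption do not meet $X_\mu$ outside of $A$. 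There is also no single $x_k$ to triangulate through. The paper closes exactly this case with a separate two-step argument: it forms $A' = B(x_{i_0},r_{i_0})\cap B(x_{j_0},r_{j_0})$, shows via Corollary~\ref{cor:admitting distance} and a connectedness argument that $B(x_{i_0},r_{i_0}+s)\cap B(x_{j_0},r_{j_0}+s) \subset X_{\lambda_0}$ for $s = d(A,A')$, and then runs a four-set pairwise-intersection argument in $\mathcal{E}(X_\lambda)$ whose output is a point of $A$ common to $B(x_i,r_i)$ and $B(x_j,r_j)$. Some substitute for this step is indispensable; without it your proof does not go through.
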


Note that this proposition generalizes the results for gluings along strongly convex and along externally hyperconvex subsets in \cite{Mie}. Clearly, in both cases the gluing set is weakly externally hyperconvex. In the first case, the intersection $B(x,d(x,A))\cap A$ is a single point (the gate) and therefore externally hyperconvex in $X$. In the second case, we have $B(x,d(x,A))\cap A \in \mathcal{E}(A)$ and therefore by Proposition~4.9 in \cite{Mie} we get $B(x,d(x,A))\cap A \in \mathcal{E}(X_\lambda)$.

\begin{proof}
Let $\{ B(x_i,r_i) \}_{i \in I}$ be a family of balls with $d(x_i,x_j) \leq r_i + r_j$. We divide the proof into two cases. 
\\ \\
\noindent \textit{Case 1.}
If for every $i,j \in I$, one has 
\[
B(x_i,r_i) \cap B(x_j,r_j)\cap A \neq \emptyset,
\]
setting $C_i := A \cap B(x_i,r_i)$, we obtain that the family $\{ C_i\}_{i \in I}$ is pairwise intersecting. Moreover, $\{ C_i\}_{i \in I}$ is contained in $\mathcal{E}(A)$ since $A$ is weakly externally hyperconvex. By Proposition~\ref{prop:intersection of EH} we obtain that $\bigcap_{i \in I} C_i \neq \emptyset$, hence $\bigcap_{i \in I} B(x_i,r_i) \neq \emptyset$.
\\ \\
\noindent \textit{Case 2.}
Otherwise there are $i_0,j_0 \in I$ with $x_{i_0},x_{j_0} \in X_{\lambda_0}$ such that 
\[
B(x_{i_0},r_{i_0}) \cap B(x_{j_0},r_{j_0})\cap A = \emptyset.
\]
Indeed, either there is some $i_0 \in I$ such that $d(x_{i_0},A) > r_{i_0}$ and we may take $i_0=j_0$ or if 
\[
B(x_{i_0},r_{i_0}) \cap B(x_{j_0},r_{j_0})\cap A = \emptyset
\]
with $d(x_{i_0},A) \leq r_{i_0}$ and $d(x_{j_0},A) \leq r_{j_0}$, we get $x_{i_0},x_{j_0} \in X_{\lambda_0}$ by Lemma \ref{lem:distance}. Observe that in both cases we may assume that for $x_i \in X_\lambda \neq X_{\lambda_0}$ we have $d(x_i,A) \leq r_i$.

Define $A_i^{\lambda_0} = B(x_i,r_i) \cap X_{\lambda_0}.$
The goal is now to show the following claim:
\\ \\
\noindent \textit{Claim. For every $i,j \in I$, one has $A_i^{\lambda_0} \cap A_j^{\lambda_0} \neq \emptyset$.}
\\ \\
Then by Lemma~\ref{lem:balls} we have $A_i^{\lambda_0} \in \mathcal{E}(X_{\lambda_0})$ and by Proposition~\ref{prop:intersection of EH} we get $$\bigcap B(x_i,r_i) \cap X_{\lambda_0} = \bigcap A_i^{\lambda_0} \neq \emptyset.$$

To prove the claim consider first the following two easy cases.
\begin{itemize}
\item If $x_i,x_j \in X_{\lambda_0}$, then we are done by hyperconvexity of $X_{\lambda_0}$.
\item If $x_i \in X_{\lambda} \neq X_{\lambda'} \ni x_j$, we have $B(x_i,r_i) \cap  B(x_j,r_j) \cap X_{\lambda_0} \neq \emptyset$ by Lemma \ref{lem:distance} and we are done.
\end{itemize}

The remaining case is when $x_i,x_j \in X_{\lambda} \neq X_{\lambda_0}$. We do this in two steps.
\\ \\
\noindent \textit{Step I.}
Set 
\[
A' := B(x_{i_0},r_{i_0}) \cap B(x_{j_0},r_{j_0}) = B^{\lambda_0}(x_{i_0},r_{i_0}) \cap B^{\lambda_0}(x_{j_0},r_{j_0})
\]
and $s := d(A,A')$. By Corollary~\ref{cor:admitting distance} we have $B(A',s)\cap A \neq \emptyset$. Furthermore we get
\[
B(A',s) = B(x_{i_0},r_{i_0}+s) \cap B(x_{j_0},r_{j_0}+s)
\]
and hence $B(x_{i_0},r_{i_0}+s) \cap B(x_{j_0},r_{j_0}+s) \subset X_{\lambda_0}$.

To see this, observe first that by \eqref{it:properties nbhds i} in Lemma~\ref{lem:properties nbhds}, we have
\[
	B(A',s) = B^{\lambda_0}(x_{i_0},r_{i_0}+s) \cap B^{\lambda_0}(x_{j_0},r_{j_0}+s) \subset X_{\lambda_0}
\]
and therefore
\[
	\left( B(x_{i_0},r_{i_0}+s) \cap B(x_{j_0},r_{j_0}+s) \right) \setminus B(A',s)  \subset X \setminus X_{\lambda_0}.
\]
Thus assume that there is some $$y \in B(x_{i_0},r_{i_0}+s) \cap B(x_{j_0},r_{j_0}+s) \cap \left(X_\lambda \setminus X_{\lambda_0}\right).$$ Now since $$B(x_{i_0},r_{i_0}+s) \cap B(x_{j_0},r_{j_0}+s) \cap A \neq \emptyset$$ and $$B(x_{i_0},r_{i_0}+s) \cap B(x_{j_0},r_{j_0}+s) \cap X_\lambda$$ is externally hyperconvex in $X_\lambda$ and thus path-connected, there is some $$y' \in B(x_{i_0},r_{i_0}+s) \cap B(x_{j_0},r_{j_0}+s) \cap X_\lambda \setminus X_{\lambda_0}$$ with $d(y',A) \leq s$. But then by Lemma~\ref{lem:distance} we have
\begin{align*}
B(x_{i_0},r_{i_0}) \cap B(y',s) \cap X_{\lambda_0} &\neq \emptyset, \\
B(x_{j_0},r_{j_0}) \cap B(y',s) \cap X_{\lambda_0} &\neq \emptyset
\end{align*}
and therefore $$B(x_{i_0},r_{i_0}) \cap B(x_{j_0},r_{j_0}) \cap B(y',s) \cap X_{\lambda_0} \neq \emptyset,$$ i.e. $y' \in B(A',s)$ contradicting $y' \notin X_{\lambda_0}$.
\\ \\
\noindent \textit{Step II.}
We now show that the family
\[
\mathcal{F} :=\{ B(x_{i_0},r_{i_0}+s)\cap X_\lambda, B(x_{j_0},r_{j_0}+s)\cap X_\lambda, B^{\lambda}(x_i,r_i), B^{\lambda}(x_j,r_j) \}
\]
is pairwise intersecting. We already observed that
\[
(B(x_{i_0},r_{i_0}+s)\cap X_\lambda) \cap (B(x_{j_0},r_{j_0}+s)\cap X_\lambda) \neq \emptyset.
\]
Further, since $x_{i_0} \in X_{\lambda_0} \neq X_{\lambda} \ni x_i$, by Lemma \ref{lem:distance}, one has
\[
(B(x_{i_0},r_{i_0}+s)\cap X_\lambda) \cap B^{\lambda}(x_i,r_i) \neq \emptyset
\]
and similarly for $(i_0,i)$ replaced by $(i_0,j)$ as well as by $(j_0,i)$ and $(j_0,j)$. Finally, 
\[
B^{\lambda}(x_i,r_i) \cap B^{\lambda}(x_j,r_j) \cap X_{\lambda} \neq \emptyset
\]
by hyperconvexity of $X_{\lambda}$. Hence, we have shown that $\mathcal{F}$ is pairwise intersecting. Since $\mathcal{F} \subset \mathcal{E}(X_{\lambda})$ it follows by Proposition~\ref{prop:intersection of EH} that
\[
C:= B^{\lambda}(x_{i_0},r_{i_0}+s) \cap  B^{\lambda}(x_{j_0},r_{j_0}+s) \cap  B^{\lambda}(x_i,r_i) \cap  B^{\lambda}(x_i,r_i) \neq \emptyset.
\]
Since $B(x_{i_0},r_{i_0}+s) \cap  B(x_{j_0},r_{j_0}+s) \cap X_\lambda \subset A$ we have in particular $C \subset A$. Hence
\[
B^{\lambda}(x_i,r_i) \cap  B^{\lambda}(x_j,r_j) \cap A \supset C \cap A = C \neq \emptyset,
\]
and this is the desired result.
\\

To see that $X_\lambda$ is weakly externally hyperconvex in $X$, use that for $x \in X$, $r \geq d(x,X_\lambda)$, $x_i \in X_\lambda$ with $d(x,x_i) \leq r + r_i$, $d(x_i,x_j) \leq r_i + r_j$ we have $B(x,r) \cap B(x_i,r_i) \cap X_\lambda \neq \emptyset$ by Lemma~\ref{lem:distance} and therefore $\{ B(x,r) \cap X_\lambda, B^\lambda(x_i,r_i) \}$ is a family of pairwise intersecting externally hyperconvex subsets of $X_\lambda$.
\end{proof}

Combining Propositions~\ref{prop:externally hyperconvex balls} and \ref{prop:gluing along WEH} we get Theorem~\ref{thm:gluing along WEH}.

\begin{Thm}
	Let $X_0$ be a hyperconvex metric space and $\{ X_\lambda \}_{\lambda \in \Lambda}$ a family of hyperconvex metric spaces with weakly externally hyperconvex subsets $A_\lambda \in \mathcal{W}(X_\lambda)$ such that for every $\lambda$ there is an isometric copy $A_\lambda \in \mathcal{W}(X_0)$ and $A_\lambda \cap A_{\lambda'}= \emptyset$ for $\lambda \neq \lambda'$. If for every $x_\lambda \in X_\lambda$ and every $x \in X_0$ we have $B(x_\lambda,d(x_\lambda,A_\lambda))\cap A_\lambda \in \mathcal{E}(X_0)$ and $B(x,d(x,A_\lambda))\cap A_\lambda \in \mathcal{E}(X_\lambda)$, then $X = X_0 \bigsqcup_{\{A_\lambda : \lambda \in \Lambda\}} X_\lambda$ is hyperconvex.
\end{Thm}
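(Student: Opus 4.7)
This theorem extends Theorem~\ref{thm:gluing along WEH} from the setting of a single gluing set shared by all pieces to multiple disjoint gluing sets $A_\lambda$, each attaching a branch $X_\lambda$ to the central hub $X_0$. The strategy is to reduce the multi-site statement to iterated two-space gluings by induction on $|\Lambda|$ for finite $\Lambda$, then extend to infinite $\Lambda$ via Proposition~\ref{Prop:IncreasingSequenceWEH}. The base case $|\Lambda|=1$ is Theorem~\ref{thm:gluing along WEH} directly.

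For the inductive step, assume the partial gluing $\tilde X_k = X_0 \sqcup_{\{A_{\lambda_j}\}_{j\le k}} (X_{\lambda_j})_{j \le k}$ is hyperconvex and that $X_0$ and each $X_{\lambda_j}$ for $j \le k$ are weakly externally hyperconvex in $\tilde X_k$. I would then attach $X_{\lambda_{k+1}}$ along $A_{\lambda_{k+1}} \subset X_0 \subset \tilde X_k$ and apply Theorem~\ref{thm:gluing along WEH} to $\tilde X_k \sqcup_{A_{\lambda_{k+1}}} X_{\lambda_{k+1}}$, checking the three hypotheses. Weak external hyperconvexity of $A_{\lambda_{k+1}}$ in $\tilde X_k$ follows from $A_{\lambda_{k+1}} \in \mathcal{W}(X_0)$ together with $X_0 \in \mathcal{W}(\tilde X_k)$ via Corollary~\ref{cor:transitivity of WEH}. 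For the condition $B(x, d(x, A_{\lambda_{k+1}})) \cap A_{\lambda_{k+1}} \in \mathcal{E}(X_{\lambda_{k+1}})$ when $x \in \tilde X_k \setminus X_{\lambda_{k+1}}$, the case $x \in X_0$ is the standing assumption; for $x \in X_{\lambda_j}$ with $j \le k$, the disjointness $A_{\lambda_j} \cap A_{\lambda_{k+1}} = \es$ forces any geodesic from $x$ to $A_{\lambda_{k+1}}$ to pass first through $A_{\lambda_j}$, so via a multi-site analog of Lemma~\ref{lem:balls} the ball intersection factors through the projection $B(x, d(x, A_{\lambda_j})) \cap A_{\lambda_j} \in \mathcal{E}(X_0)$. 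Combined with Lemma~\ref{lem:properties nbhds}$(ii)$ and the standing assumption applied to a representative of this projection viewed as a point of $X_0$, this yields the desired external hyperconvexity in $X_{\lambda_{k+1}}$.

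The main obstacle is the reverse condition for $x \in X_{\lambda_{k+1}}$: the hypothesis gives $B(x, d(x, A_{\lambda_{k+1}})) \cap A_{\lambda_{k+1}} \in \mathcal{E}(X_0)$ but one needs membership in $\mathcal{E}(\tilde X_k)$. Lemma~\ref{lem:subset of WEH} applied to $X_0 \in \mathcal{W}(\tilde X_k)$ only yields the weaker membership in $\mathcal{W}(\tilde X_k)$. I would handle the upgrade by exploiting the disjointness of the gluing sets: a sufficiently small neighborhood of $A_{\lambda_{k+1}}$ in $\tilde X_k$ is isometric to the corresponding neighborhood in the two-space gluing $X_0 \sqcup_{A_{\lambda_{k+1}}} X_{\lambda_{k+1}}$, since no other branch can be reached from $A_{\lambda_{k+1}}$ without first traversing the interior of $X_0$. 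External hyperconvexity therefore lifts to this neighborhood, and Lemma~\ref{lem:locally externally hyperconvex} then promotes it to all of $\tilde X_k$. Once the inductive step is established, the infinite case is obtained by exhausting $\Lambda$ by an increasing sequence of finite subsets $\Lambda_n$, observing that the corresponding gluings $\tilde X_{\Lambda_n}$ form an increasing family of weakly externally hyperconvex subsets of $X$, and invoking Proposition~\ref{Prop:IncreasingSequenceWEH} to pass to the limit.
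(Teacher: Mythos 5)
Your overall strategy differs from the paper's: you attach the branches one at a time and induct, whereas the paper first forms the two\nobreakdash-piece gluings $Y_\lambda := X_0 \sqcup_{A_\lambda} X_\lambda$ (each hyperconvex with $X_0 \in \mathcal{W}(Y_\lambda)$ by Theorem~\ref{thm:gluing along WEH}) and then observes that $X$ is the gluing of \emph{all} the $Y_\lambda$ simultaneously along the single set $X_0$, so that one further application of Theorem~\ref{thm:gluing along WEH} finishes the proof. That re-decomposition is not a cosmetic difference: it reduces the only remaining obligation to showing $B := B(x,d(x,A_\lambda))\cap A_\lambda \in \mathcal{E}(Y_{\lambda'})$, which is handled by noting $B \in \mathcal{E}(X_0)$ by hypothesis, $d(B,A_{\lambda'})>0$ by Corollary~\ref{cor:admitting distance} (here $B$ is bounded and externally hyperconvex, $A_{\lambda'}$ only weakly externally hyperconvex and disjoint from $B$), and then invoking Lemma~\ref{lem:locally externally hyperconvex}. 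Your neighborhood-plus-Lemma~\ref{lem:locally externally hyperconvex} idea is the right tool here, but note it must be applied to a neighborhood of the bounded set $B$, not of all of $A_{\lambda_{k+1}}$: two disjoint closed weakly externally hyperconvex sets need not be at positive distance, so "a sufficiently small neighborhood of $A_{\lambda_{k+1}}$" may fail to avoid the other branches.

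Two steps of your argument have genuine gaps. First, in the inductive step you must verify, for $x \in X_{\lambda_j}$ with $j \le k$, that $B(x,d(x,A_{\lambda_{k+1}}))\cap A_{\lambda_{k+1}} \in \mathcal{E}(X_{\lambda_{k+1}})$. Writing $C := B(x,d(x,A_{\lambda_j}))\cap A_{\lambda_j}$, the set in question is the set of points of $A_{\lambda_{k+1}}$ nearest to the \emph{set} $C$, i.e. $B(C,d(C,A_{\lambda_{k+1}}))\cap A_{\lambda_{k+1}}$; this is in general not of the form $B(y,d(y,A_{\lambda_{k+1}}))\cap A_{\lambda_{k+1}}$ for any single point $y \in X_0$, so "the standing assumption applied to a representative of this projection" does not apply to it, and no hypothesis of the theorem directly controls such sets. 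This is precisely the difficulty the paper's decomposition avoids, since gluing the $Y_\lambda$ along $X_0$ only ever requires information about $B(x,d(x,X_0))\cap X_0$ for $x$ in a single branch. Second, your passage to infinite $\Lambda$ via Proposition~\ref{Prop:IncreasingSequenceWEH} is unavailable: that proposition requires the closure of the increasing union to be \emph{proper}, and nothing in the theorem's hypotheses gives properness of $X$ (nor is $\Lambda$ assumed countable, so an exhaustion by a sequence of finite subsets need not even cover $X$). The paper needs no limiting argument at all because Theorem~\ref{thm:gluing along WEH} already allows an arbitrary index set.
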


\begin{proof}
	First by Theorem~\ref{thm:gluing along WEH} we get that $Y_\lambda = X_0 \sqcup_{A_\lambda} X_\lambda$ is hyperconvex and $X_0 \in \mathcal{W}(Y_\lambda)$. Observe that $X$ can be obtained by gluing the spaces $Y_\lambda$ along $X_0$, i.e. $X= \bigsqcup_{X_0} Y_\lambda$. Therefore it remains to prove that for $\lambda \neq \lambda'$ and $x \in Y_\lambda$ the intersection $B:=B(x,d(x,X_0)) \cap X_0 \in \mathcal{E}(Y_{\lambda'})$.

	By Corollary~\ref{cor:transitivity of WEH} clearly we have $B \in \mathcal{W}(Y_{\lambda'})$. Without loss of generality we may assume that $x \notin X_0$. Then we have $d(x,X_0)=d(x,A_\lambda)$ and therefore $B=B(x,d(x,A_\lambda))\cap A_\lambda \in \mathcal{E}(X_0)$, especially $B \subset A_\lambda$. Hence by Corollary~\ref{cor:admitting distance} we get $d(B,A_{\lambda'}) > 0$, i.e. there is some $s > 0$ such that $B^{\lambda'}(B,s) \subset X_0$. Thus $B \in \mathcal{E}( B^{\lambda'}(B,s))$ and by Lemma~\ref{lem:locally externally hyperconvex} we get $B \in \mathcal{E}(Y_{\lambda'})$ as desired.	
\end{proof}

\begin{Prop}\label{prop:gluing copies}
	Let $X$ be a metric space and $A \subset X$ such that $X \sqcup_A X$ is hyperconvex. Then, the following hold
	\begin{enumerate}[(i)]
	\item $A$ is weakly externally hyperconvex in $X$,
	\item For every $x \in X$, the intersection $B(x,d(x,A))\cap A$ is externally hyperconvex in $X$.
\end{enumerate}
\end{Prop}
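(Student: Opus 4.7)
The plan is to exploit the hyperconvexity of $X \sqcup_A X$ via a ``doubling'' trick that forces certain intersection points to lie in $A$. First I would observe that $X$ itself is hyperconvex: the retraction $X \sqcup_A X \to X$ that is the identity on the first copy and the canonical isometry on the second copy is $1$-Lipschitz, and hyperconvexity passes to $1$-Lipschitz retracts. Writing $x_1 \in X_1$ and $x_2 \in X_2$ for the two copies of a point $x \in X$, and setting $s := d(x, A)$, the gluing distance formula yields $d(x_1, x_2) = 2s$, while for $y \in X_1$ one computes $d(y, x_2) = \inf_{a \in A}[d(y, a) + d(a, x)] \ge d(y, A) + s$ (symmetrically for $y \in X_2$). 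Hence every $y \in X \sqcup_A X$ with $d(y, x_1), d(y, x_2) \le s$ satisfies $d(y, A) = 0$ and therefore lies in $A$, which is closed by the gluing convention.

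For (ii), given a family $\{B(x_i, r_i)\}_{i \in I}$ in $X$ with $d(x_i, x_j) \le r_i + r_j$ and $d(x_i, C) \le r_i$, where $C := B(x, s) \cap A$, I would augment the family inside $X \sqcup_A X$ by $B(x_1, s)$ and $B(x_2, s)$. The pairwise bound $d(x_i, x_\lambda) \le r_i + s$ for $\lambda = 1, 2$ is verified by picking, for each $\delta > 0$, a point $c_i \in C$ with $d(x_i, c_i) \le r_i + \delta$ and using $d(c_i, x_\lambda) \le s$. Hyperconvexity of $X \sqcup_A X$ then yields a common point $y$, which the doubling observation forces into $A$, placing $y$ in $C \cap \bigcap_i B(x_i, r_i)$.

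For (i), I would apply Lemma \ref{lem:characterization WEH} in the hyperconvex ambient $X$, reducing the statement to checking, for every $x \in X$: that $C = B(x, s) \cap A$ is externally hyperconvex in $A$ (which follows from (ii), since external hyperconvexity in $X$ descends to any intermediate subset whose ambient balls are restrictions), and that every $y \in A$ admits some $a \in C$ with $d(x, y) = d(x, a) + d(a, y)$. For the decomposition, set $t := d(x, y) - s \ge 0$ and consider in $X \sqcup_A X$ the three balls $B(x_1, s), B(x_2, s), B(y, t)$; the pairwise conditions $d(x_1, x_2) = 2s$ and $d(x_\lambda, y) = s + t$ are immediate, and hyperconvexity yields a common point $a$ which the doubling observation places into $A$, whereupon the triangle inequality forces the required equality.

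The main obstacle is setting up the doubling observation correctly, specifically the lower bound $d(y, x_2) \ge d(y, A) + s$ and its corollary that any point of $B(x_1, s) \cap B(x_2, s)$ lies in $A$. Once this is in place both parts of the proposition follow quickly, with (ii) providing the crucial external-hyperconvexity input to Lemma \ref{lem:characterization WEH} used in the proof of (i).
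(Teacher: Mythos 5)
Your argument is correct, and it shares the paper's central device -- doubling the point $x$ into the two copies $x_1,x_2$ and intersecting the balls $B(x_1,\cdot)\cap B(x_2,\cdot)$ -- but it executes part (i) differently. For (ii) the two proofs essentially coincide: your identity $B(x_1,s)\cap B(x_2,s)=B(x,s)\cap A$ (justified by the lower bound $d(y,x_2)\ge d(y,A)+s$ for $y\in X_1$) is exactly the paper's observation that $B(x,d(x,A))\cap A$ is an admissible, hence externally hyperconvex, subset of the hyperconvex space $X\sqcup_A X$; you merely unwind the admissibility argument. For (i) the paper works directly with a ball $B(x_0,r_0)$ of \emph{arbitrary} radius $r_0\ge d(x_0,A)$ around both copies; since $B(x_0,r_0)\cap B(x_0',r_0)$ is then no longer contained in $A$, it must invoke geodesicity of ball intersections to produce a point of $A$ (a geodesic joining a point of $X$ to a point of $X'$ inside the intersection must cross $A$). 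You instead keep the radius at exactly $s=d(x,A)$, where the doubled intersection sits inside $A$ automatically, and recover the general-radius statement by routing through Lemma~\ref{lem:characterization WEH}: condition $(i)$ of that lemma follows from your part (ii), and condition $(ii)$ from a clean three-ball configuration $B(x_1,s),B(x_2,s),B(y,t)$. This trades the paper's geodesic argument for a reliance on the characterization lemma (and on the preliminary observation that $X$ is hyperconvex as a $1$-Lipschitz retract of $X\sqcup_A X$, which you correctly supply); both routes are sound, and yours has the mild advantage of never needing path-connectedness of ball intersections.
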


\begin{proof}
	Let us denote the second copy of $X$ by $X'$ and for any $y \in X$, let $y'$ denote its corresponding copy in $X'$. Pick $x_0 \in X$ and $r_0 \ge 0$ such that $d(x_0,A) \leq r_0$ and let $\{(x_i,r_i)\}_{i \in I} \subset A \times [0,\infty)$ be such that $d(x_i,x_j) \leq r_i+r_j$ for $i,j \in I \cup \{0\}$. It follows that $d(x_0,x_0') \leq 2r_0$ and since $X \sqcup_A X$ is hyperconvex we have 
\[
	B := \bigcap_{i\in I} B(x_i,r_i) \cap B(x_0,r_0) \cap B(x_0',r_0)\neq \emptyset.
\]
	By symmetry there are $y,y' \in B$ with $y\in X$ and $y' \in X'$. Then, since intersections of balls are hyperconvex there is some geodesic $[y,y'] \subset B$,
	which must intersect $A$. Therefore we get 
\[
	\bigcap_{i\in I} B(x_i,r_i) \cap B(x_0,r_0) \cap A \neq \emptyset
\]
	and hence $A$ is weakly externally hyperconvex.
	
	For \textit{(ii)} observe that
\[
	B(x,d(x,A))\cap A=B(x,d(x,A))\cap B(x',d(x,A)) \in \mathcal{A}(X \sqcup_A X).
\] 
\end{proof}

Condition $(i)$ is not enough as the following example shows.

\begin{Expl}
Let $X_1$ and $X_2$ be two copies of $l_{\infty}^3$. Consider the gluing $X := X_1 \sqcup_V X_2$ where 
\[
V := \{x \in l_{\infty}^3 : x_1 = x_2 \text{ and } x_3 = 0\}.
\]
and where the gluing maps are given by the inclusion maps for $V$. To see that $X$ is not hyperconvex, consider $p_1:=(0,0,1)$ in $X_1$ as well as $p_2:=(2,0,0)$ and $p_2':=(0,-2,0)$ both in $X_2$. Note that $B(p_1,1) \cap X_2 = \{(t,t,0):t \in [-1,1]\}$ and hence $B(p_1,1) \cap B(p_2,1)=\{(1,1,0)\} \in V$ as well as $B(p_1,1) \cap B(p_2',1)=\{(-1,-1,0)\} \in V$. Moreover, $B(p_2,1) \cap B(p_2',1) = \{1\} \times \{-1\} \times [-1,1] \subset X_2$. Hence, 
\[
B(p_1,1) \cap B(p_2,1) \cap B(p_2',1) = \emptyset.
\] 
\end{Expl}

But as a consequence of Propositions~\ref{prop:gluing along WEH} and \ref{prop:gluing copies} we get the necessary and sufficient condition stated in Theorem~\ref{thm:gluing of copies}.

\begin{Expl}
	Let $A$ be a strongly convex subset of the hyperconvex space $X$ and $r \geq 0$. Then $B(A,r)$ is weakly externally hyperconvex and for any $x \in X$ the set $B(x,d(x,B(A,r)))\cap B(A,r)$ is externally hyperconvex in $X$ by Lemma~\ref{lem:nbhd of WEH}. Hence gluing along $B(A,r)$ preserves hyperconvexity.
\end{Expl}


\section{The Case of $l_{\infty}(I)$} 


A \emph{cuboid} in $l_\infty^n$ is the product $\prod_{i=1}^n I_i$ of closed (but not necessarily bounded) non-empty intervals $I_i \subset \mathbb{R}$.

\begin{Prop}\label{Prop:Cuboids}
	A subset $A$ of $l_\infty^n$ is externally hyperconvex if and only if it is a cuboid.
\end{Prop}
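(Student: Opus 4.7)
The plan is to prove the two implications separately. For the easy direction (cuboid $\Rightarrow$ externally hyperconvex), I would first check that any non-empty closed interval $I \subset \R$ lies in $\mathcal{E}(\R)$: given balls $\{B(c_j, r_j)\}$ in $\R$ with $|c_j - c_k| \le r_j + r_k$ and $d(c_j, I) \le r_j$, a short direct computation shows that $[\sup_j(c_j - r_j), \inf_j(c_j + r_j)]$ meets $I$. Applying Lemma~\ref{Lem:Products}(ii) iteratively to the decomposition $l_\infty^n = \R \times l_\infty^{n-1}$ then yields $\prod_i I_i \in \mathcal{E}(l_\infty^n)$ for any product of non-empty closed intervals.

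For the converse, let $A \in \mathcal{E}(l_\infty^n)$ and set $A_i := \pi_i(A)$. The inclusion $A \sub \prod_i A_i$ is trivial; the work is to show the reverse. My strategy is, for each $y = (y_1, \ldots, y_n) \in \prod_i A_i$, to exhibit a family of balls in $l_\infty^n$ satisfying the two hypotheses of external hyperconvexity whose intersection is exactly $\{y\}$, which will force $y \in A$. Concretely, for each $k \in \{1, \ldots, n\}$ I pick $x^{(k)} \in A$ with $x^{(k)}_k = y_k$ (possible since $y_k \in A_k$) and set
\[
R := \max_{k} \|y - x^{(k)}\|_\infty = \max_{k} \max_{j \ne k} |y_j - x^{(k)}_j|.
\]
I would then consider the $2n$ balls $B_k^\pm := B(y \pm R e_k, R)$ for $k = 1, \ldots, n$.

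The verifications are all short. Any two of these $2n$ centres are at $l_\infty$-distance at most $2R = R + R$ (the maximum being attained only by an opposite pair $B_k^+, B_k^-$), and each centre has distance at most $R$ from $A$ via the witness $x^{(k)}$, since $\|(y \pm R e_k) - x^{(k)}\|_\infty = \max(R, \|y - x^{(k)}\|_\infty) = R$. Coordinate-wise, $B_k^+ \cap B_k^-$ pins the $k$-th coordinate to $y_k$ and leaves the other coordinates in $[y_j - R, y_j + R]$; intersecting over all $k$ pins every coordinate and gives $\bigcap_{k, \pm} B_k^\pm = \{y\}$. External hyperconvexity then forces $y \in A$, so $A = \prod_i A_i$. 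Finally, Lemma~\ref{Lem:Products}(ii) applied iteratively gives each $A_i \in \mathcal{E}(\R)$, and the one-dimensional version of the above argument (two balls suffice) shows $\mathcal{E}(\R)$ consists exactly of the closed non-empty intervals; hence $A$ is a cuboid.

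The step I expect to be the main obstacle is the design of the ball family. The naive choice of only $n$ balls $B(x^{(k)}, \|y - x^{(k)}\|_\infty)$, one per coordinate, centred at points of $A$, is insufficient: their intersection is in general strictly larger than $\{y\}$, and external hyperconvexity then only provides some point of $A$ near $y$, not $y$ itself. The essential trick is to double the family to $2n$ balls centred at the translates $y \pm R e_k$ of $y$, which lie outside $A$ but remain within distance $R$ of $A$ through the witnesses $x^{(k)}$, so that each coordinate receives both an upper and a lower pin and the intersection collapses to the singleton $\{y\}$.
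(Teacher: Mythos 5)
Your proof is correct and follows essentially the same route as the paper: the easy direction via Lemma~\ref{Lem:Products}, and the converse by surrounding the target point with $2n$ balls centred at its translates $y \pm R e_k$, each certified close to $A$ by a witness point of $A$, so that the intersection collapses to the singleton and external hyperconvexity forces membership. The only cosmetic difference is that the paper uses two witnesses $x,y \in A$ with coordinate-dependent radii and deduces the two-point ``box'' property, whereas you use one witness per coordinate with a uniform radius to get $A = \prod_i \pi_i(A)$ directly; both reduce to the same ball construction.
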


\begin{proof}
	On the one hand, cuboids are externally hyperconvex by Lemma~\ref{Lem:Products}. On the other hand, if $A$ is externally hyperconvex it is closed. Hence it is enough to show that for any points $x,y \in A$ and $z \in l_\infty^n$ with $z_i \in I(x_i,y_i)$ for each $i \in \{1, \ldots, n \}$, it follows that $z \in A$. Without loss of generality we may assume that $x_i \leq z_i \leq y_i$. Let 
	\[
	r := \max_{i \in \{1, \ldots, n \} } \{z_i-x_i, y_i-z_i\}.
	\]
	For each $i \in \{1, \ldots, n \}$, define 
	\begin{align*}
		p^i &= (z_1, \ldots z_{i-1}, x_i-r, z_{i+1}, \ldots, z_n ) \ \text{ and } \ r_i = z_i-x_i + r, \\
		q^i &= (z_1, \ldots z_{i-1}, y_i+r, z_{i+1}, \ldots, z_n ) \ \text{ and } \  s_i = y_i-z_i + r.
	\end{align*}
	Then, we have
	$$\bigcap_{i=1}^n B(p^i,r_i) \cap \bigcap_{i=1}^n B(q^i,s_i) = \{z\}.$$
	Moreover $d(p^i,A) \leq d(p^i,x) \leq r_i$ as well as $d(q^i,A) \leq d(q^i,y) \leq s_i$ and therefore since $A$ is externally hyperconvex
	$$\emptyset \neq \bigcap_{i=1}^n B(p^i,r_i) \cap \bigcap_{i=1}^n B(q^i,s_i) \cap A \subset \{z\}.$$
	It follows that $z \in A$ and this concludes the proof.
\end{proof}

The proof of Theorem~\ref{Thm:CellsInjectiveHullWEH} is a direct adaptation of the proof of \cite[Proposition~2.4]{Lan}. Note moreover that the proximinality assumption is no restriction since any weakly externally hyperconvex subset of a hyperconvex metric space is proximinal, cf. \cite{EspK}. Furthermore, if the index set $I$ is countable, $Q$ given by a system of inequalities as in Theorem~\ref{Thm:CellsInjectiveHullWEH} is always proximinal.

\begin{Lem}
Suppose that $Q$ is a non-empty subset of $l_\infty(I)$, with $I$ countable, given by an arbitrary system of inequalities of the form $\sig x_i \le C$ or $\sig x_i + \tau x_j \le C$ with $\sig,\tau \in \{ \pm 1 \} $ and $C \in \R$. Then, $Q$ is proximinal.
\end{Lem}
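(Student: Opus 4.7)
My plan is to argue by a Cantor diagonal extraction, exploiting that each defining inequality of $Q$ involves at most two coordinates, so pointwise convergence on $I$ suffices to preserve membership in $Q$.

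First I would fix $x \in l_\infty(I)$, set $r := d(x,Q)$, and choose a sequence $(y^{(n)})_{n \in \mathbb{N}} \subset Q$ with $\|x - y^{(n)}\|_\infty \le r + \tfrac{1}{n}$. For every index $i \in I$ we then have $|y^{(n)}_i| \le |x_i| + r + 1$, so the real sequence $(y^{(n)}_i)_{n}$ is bounded. Since $I$ is countable, enumerate $I = \{i_1,i_2,\dots\}$ and apply the standard Cantor diagonal procedure to extract a subsequence, still denoted $(y^{(n)})$, such that $y^{(n)}_i \to y_i$ for every $i \in I$.

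Next I would verify that the limit $y = (y_i)_{i \in I}$ lies in $Q \cap B(x,r)$. For each $i$, passing to the limit in $|x_i - y^{(n)}_i| \le r + \tfrac{1}{n}$ gives $|x_i - y_i| \le r$, so in particular $y \in l_\infty(I)$ and $\|x - y\|_\infty \le r$. To see $y \in Q$, consider any defining inequality. If it has the form $\sigma x_i \le C$, then $\sigma y^{(n)}_i \le C$ for all $n$, and taking $n \to \infty$ yields $\sigma y_i \le C$. If it has the form $\sigma x_i + \tau x_j \le C$, the same pointwise limit argument, applied to the two coordinates $i,j$ simultaneously, yields $\sigma y_i + \tau y_j \le C$. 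Hence every defining constraint is preserved and $y \in Q$.

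Combining the two conclusions, $y \in Q$ and $d(x,y) \le r = d(x,Q)$, so $B(x, d(x,Q)) \cap Q \neq \emptyset$, which is exactly proximinality of $Q$. The only non-routine ingredient is the diagonal extraction, which is why countability of $I$ is used; the crucial structural feature being exploited is that each inequality defining $Q$ depends on at most two coordinates, so that coordinate-wise limits automatically respect all the constraints — no uniform control on the tails of $y^{(n)}$ is required.
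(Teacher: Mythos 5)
Your proof is correct, but it takes a genuinely different (and more elementary) route than the paper. The paper identifies $l_\infty(I)$ with $l_1(I)^*$, observes that the functionals $f \mapsto \sigma f_i$ and $f \mapsto \sigma f_i + \tau f_j$ are weak*-continuous because $\sigma e_i$ and $\sigma e_i + \tau e_j$ lie in $l_1(I)$, concludes that $Q$ is weak*-closed, and then applies Banach--Alaoglu together with the finite (here: countable) intersection property to the decreasing family $B(x,\Delta+\tfrac1n)\cap Q$. Your argument replaces all of this functional analysis with a bare-hands Cantor diagonal extraction of a coordinatewise convergent minimizing sequence, using exactly the structural point the paper's proof also rests on: each constraint involves at most two coordinates, so pointwise limits preserve membership in $Q$, and $\|x-y\|_\infty = \sup_i |x_i - y_i| \le r$ follows coordinatewise. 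The two arguments are close relatives --- on bounded subsets of $l_\infty(I)$ with $I$ countable, weak* convergence of sequences is coordinatewise convergence and the weak* topology is metrizable, so your sequential extraction is the concrete incarnation of the paper's compactness argument. What each buys: yours is self-contained and avoids invoking duality and Alaoglu, but genuinely needs countability of $I$ for the diagonal step; the paper's topological version is not intrinsically sequential and in fact goes through for an arbitrary index set, which is consistent with the paper also asserting the more general Theorem~\ref{Thm:CellsInjectiveHullWEH} under a proximinality hypothesis for arbitrary $I$.
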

\begin{proof}
Recall that $l_1(I)^*$ is isomorphic to $l_\infty(I)$. Note that the maps from $l_{\infty}(I)$ to $\R$ given by $\varphi_{\sigma e_i}: f \mapsto \sigma f_i$ and $\varphi_{\sigma e_i + \tau e_j} : f \mapsto \sigma f_i + \tau f_j$ are continuous in the weak* topology on $l_{\infty}(I)$ since $\sigma e_i$ and $\sigma e_i + \tau e_j$ are both in $l_1(I)$. Since by assumption 
\[
Q = \bigcap_{(i,\sigma)} \varphi_{\sigma e_i}^{-1}((-\infty,C_{(i,\sigma)}]) \cap \bigcap_{(i,j,\sigma,\tau)} \varphi_{\sigma e_i + \tau e_j}^{-1}((-\infty,C_{(i,j,\sigma,\tau)}])
\]
we deduce that $Q$ is weak* closed and setting $\Delta := d(x,Q)$, it follows from the theorem of Banach-Alaoglu that the set $A:= B(x,\Delta+1) \cap Q$ is weak* compact. Now, the sets 
\[
(B(x,\Delta +\tfrac{1}{n}) \cap Q)_{n \in \mathbb{N}}
\]
form a decreasing sequence of weak*-closed non-empty subsets of $A$. By the closed set criterion for compact sets, it follows that
\[
B(x,\Delta) \cap Q = \bigcap_{n \in \mathbb{N}} \bigl(B(x,\Delta +\tfrac{1}{n}) \cap Q \bigr) \neq \emptyset.
\]
This shows that $Q$ is proximinal.
\end{proof}

We now turn to:

\begin{proof}[Proof of Theorem~\ref{Thm:CellsInjectiveHullWEH}]
For $i \in I$, denote by $R_i$ the reflection of $l_\infty(I)$ that 
interchanges $x_i$ with $-x_i$.

Let $A \cup \{b \}$ be a metric space with $\es \ne A$ and let $y \in l_\infty(I) \setminus Q$. For any $f \in \Lip_1(A,Q \cup \{y\})$ satisfying that $d_{\infty}(y,Q) \leq d(a,b)$ if $f(a)=y$, we show that there is an extension $\ol f \in \Lip_1(A \cup \{b \},Q \cup \{y\})$ such that $\ol f( b) \in Q$. It is easy to see that this implies that $Q \in \mathcal{W}(l_\infty(I))$, it is similar to showing that injectivity implies hyperconvexity, cf. \cite{Lan}.

Let $A' := A \setminus f^{-1}(\{y\})$. By proximinality of $Q$, we have $Q \cap B(y,d_\infty(y,Q)) \neq \es$. We can thus assume that 
\begin{equation} \label{eq:l0}
0 \in Q \cap B(y,d_{\infty}(y,Q)) \subset Q \cap \bigcap_{a' \in f^{-1}(\{y\})}B(f(a'),d(a',b)),
\end{equation}
so that all constants on the right sides of the inequalities describing $Q$ are non-negative. First, for a real valued function $f \in \Lip_1(A,\R)$, we combine the smallest and largest $1$-Lipschitz extensions and define $\ol f \colon A \cup \{b \} \to \R$ by
\[
\ol f(b) := \sup \Bigl\{ 0,\,\sup_{a \in A}(f(a) - d(a,b)) \Bigr\} + 
\inf \Bigl\{ 0,\,\inf_{a' \in A}(f(a') + d(a',b)) \Bigr\}.
\]
Note that at most one of the two summands is nonzero since 
$f(a) - d(a,b) \le f(a') + d(a,a') - d(a,b) \le f(a') + d(a',b)$.
It is not difficult to check that $\ol f$ is a $1$-Lipschitz extension 
of $f$ and that $\ol{R \circ f} = R \circ \ol f$ for the reflection 
$R \colon x \mapsto -x$ of $\R$. Moreover, by~\eqref{eq:l0}, it follows that 
\[
\sup_{a \in f^{-1}(\{y\})}(f(a) - d(a,b))  \le 0
\]
and
\[
\inf_{a \in f^{-1}(\{y\})}(f(a) + d(a,b))  \ge 0.
\]
It follows that $\ol f$ can be defined by taking suprema and infima on $A'$ instead of $A$ without changing $\ol f(b)$. We define the extension operator 
\[
\phi \colon \Lip_1(A,l_\infty(I)) \to \Lip_1(A \cup \{b \},l_\infty(I))
\]
such that $\phi(f)$ satisfies $\phi(f)_i = \ol{f_i}$ for every~$i$.
Clearly $\phi(f) \in \Lip_1(A \cup \{b \},l_\infty(I))$ and
\begin{equation} \label{eq:l1}
\phi(R_i \circ f) = R_i \circ \phi(f)
\end{equation}
for every $i$. 
To see that $\phi(f)(b) \in Q$, it thus suffices to show that the components of $\phi(f)$ satisfy
\begin{equation} \label{eq:l2}
\phi(f)_i + \phi(f)_j \le C
\end{equation}
whenever $f_i + f_j \le C$ for some pair of possibly equal indices $i,j$
and some constant $C \ge 0$.

Suppose that $f_i(a) + f_j(a) \le C$ for some indices $i,j$, some constant $C \ge 0$ and every $a \in A' := A \setminus f^{-1}(\{y\})$. Assume that  
$\phi(f)_i(b) \ge \phi(f)_j(b)$. If $\phi(f)_j(b) > 0$, then
\begin{align*}
\phi(f)_i(b) + \phi(f)_j(b)
&= \sup_{a,a' \in A'} (f_i(a) + f_j(a') - d(a,b) - d(a',b)) \\
&\le \sup_{a,a' \in A'} (f_i(a) + f_j(a') - d(a,a')) \\
&\le \sup_{a \in A'} (f_i(a) + f_j(a)) \le C.
\end{align*}
If $\phi(f)_i(b) > 0 \ge \phi(f)_j(b)$, then 
\begin{align*}
\phi(f)_i(b) + \phi(f)_j(b)
&\le \sup_{a \in A'} (f_i(a) - d(a,b)) + \inf_{a' \in A'} (f_j(a') + d(a',b)) \\
&\le \sup_{a \in A'} (f_i(a) + f_j(a)) \le C.
\end{align*}
Finally, if $\phi(f)_i(b) \le 0$, then 
$\phi(f)_i(b) + \phi(f)_j(b) \le 0 \le C$.
\end{proof}

\begin{Rem}\label{RemarkInjectiveHull}
By Theorem~\ref{Thm:CellsInjectiveHullWEH}, and borrowing the notation and terminology from \cite{Lan}, for any metric space $X$ and for any admissible set $A \in \cA(X)$, note  that the set 
\[
P(A) := \Delta(X) \cap H(A)
\]
is isometric to a weakly externally hyperconvex subset of $l_{\infty}(X)$ if it is proximinal. Indeed, for any point $x_0 \in X$, let us denote by $\tau \colon \R^X \to \R^X$ the translation $f \mapsto f - d_{x_0}$ where $d_{x_0}: x \mapsto d(x_0,x) $. One has
\[
\tau(P(A)) \subset \tau(\E X) \subset l_{\infty}(X),
\]
see \cite[Section~3~and~4]{Lan}. Recall that 
\begin{align*}
\Delta(X) &:= \{ f \in \R^X : f(x) + f(y) \ge d(x,y) \text{ for all } x,y \in X \} \text{ and } \\
H(A) &:= \{ g \in \R^X : g(x) + g(y) = d(x,y) \text{ for all } \{x,y\} \in A \}.
\end{align*}
By Theorem~\ref{Thm:CellsInjectiveHullWEH}, if $P(A)$ is proximinal, it follows $\tau(P(A))$ is in $\mathcal{W}(l_{\infty}(X))$, as claimed. In particular, it follows that $\tau(P(A))$ is weakly externally hyperconvex in $\tau(\E X)$ and since $\tau$ is an isometry, $P(A) \in \mathcal{W}(\E X)$. Finally, remember that under suitable assumptions on $X$, the family $\{P(A)\}_{A \in \cA(X)}$ endows $\E X$ with a canonical polyhedral structure where for any $A \in \cA(X)$, $P(A)$ is a cell of $\E X$ isometric to a convex polytope in $l^n_{\infty}$ hence it is compact and thus in particular proximinal and Theorem~\ref{Thm:CellsInjectiveHullWEH} applies.
\end{Rem}

In the following, we use the notation $I_n :=\{1,\dots,n\}$.

\begin{Thm}\cite[Theorem~2.1]{Pav}\label{Thm:HyperconvexSubspaces}
Let $\emptyset \neq V \subset l_{\infty}^n$ be a linear subspace and let $k:= \mathrm{dim}(V)$. Then, the following are equivalent:
\begin{enumerate}[$(i)$]
\item $V$ is hyperconvex.
\item There is a subset $J \subset I_n$ with $|J|=k$ such that for any $i \in I_n \setminus J$ there exist real numbers $\{c_{i,j}\}_{j \in J}$ such that $\sum_{j \in J} |c_{i,j}| \le 1$ and
\[
V = \Biggl \{ (x_1,\dots,x_n) \in l_{\infty}^n : \text{ for all } i \in I_n \setminus J \ , \text{ one has } x_i = \sum_{j \in J} c_{i,j}x_j \Biggr \}.
\]
\end{enumerate}
\end{Thm}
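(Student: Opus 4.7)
The approach is to verify each implication separately. The direction $(ii) \Rightarrow (i)$ reduces to exhibiting an explicit linear isometry $V \cong l_\infty^k$, whereas $(i) \Rightarrow (ii)$ requires the structural classification of finite-dimensional hyperconvex Banach spaces.

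For $(ii) \Rightarrow (i)$, I would consider the coordinate projection $\pi_J \colon V \to l_\infty^k$ sending $x \mapsto (x_j)_{j \in J}$; it is clearly linear and $1$-Lipschitz. Its inverse, assigning to $(y_j)_{j \in J}$ the unique vector $x$ with $x_j := y_j$ for $j \in J$ and $x_i := \sum_{j \in J} c_{i,j} y_j$ for $i \in I_n \sm J$, is well defined and linear, and the hypothesis $\sum_{j \in J} |c_{i,j}| \le 1$ immediately yields $|x_i| \le \max_{j \in J} |y_j|$; hence $\pi_J^{-1}$ is also $1$-Lipschitz. Therefore $\pi_J$ is a linear isometry onto $l_\infty^k$, so $V$ is hyperconvex.

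For $(i) \Rightarrow (ii)$, I would invoke the classical fact (due to Nachbin and Kelley) that every $k$-dimensional real Banach space which is injective, equivalently hyperconvex as a metric space, is linearly isometric to $l_\infty^k$. Fix an isometric linear isomorphism $\Phi \colon l_\infty^k \to V$ and set $v_l := \Phi(e_l) \in l_\infty^n$ for $l \in I_k$, with coordinates $v_l = (v_{i,l})_{i \in I_n}$. The isometry condition reads
\[
\max_{i \in I_n} \Bigl| \sum_{l=1}^k \alpha_l v_{i,l} \Bigr| = \max_{l \in I_k} |\alpha_l| \qquad \text{for all } \alpha \in \R^k.
\]
Taking $\alpha_l := \sgn(v_{i,l})$ in the $\le$-direction yields $\sum_{l=1}^k |v_{i,l}| \le 1$ for each $i \in I_n$; taking $\alpha = e_l$ produces some $i_l \in I_n$ with $|v_{i_l,l}| = 1$, which combined with the row bound forces $v_{i_l,l} = \pm 1$ and $v_{i_l,l'} = 0$ for $l' \ne l$. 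The indices $i_1, \ldots, i_k$ are then pairwise distinct, and setting $J := \{i_1, \ldots, i_k\}$, every $x = \sum_l \alpha_l v_l \in V$ satisfies $\alpha_l = \pm x_{i_l}$; substituting back gives $x_i = \sum_{j \in J} c_{i,j} x_j$ for $i \notin J$ with $\sum_{j \in J} |c_{i,j}| = \sum_l |v_{i,l}| \le 1$, which is exactly $(ii)$.

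The main obstacle is the appeal to $V \cong l_\infty^k$ at the Banach-space level, a nontrivial classical result; once granted, everything after it is clean linear bookkeeping driven by the $l_\infty$-isometry identity. An alternative avoiding Nachbin-Kelley would build the indices $i_1, \ldots, i_k$ inductively from the hyperconvexity of $V$ itself, choosing at each step a coordinate along which a new extreme direction of the unit ball of $V$ attains its norm and vanishes on the previously chosen coordinates; this essentially reproves the classification in the case at hand.
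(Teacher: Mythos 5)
Your argument is correct, but note that the paper does not actually prove this statement: it is imported verbatim as \cite[Theorem~2.1]{Pav}, so there is no in-paper proof to compare against. Taken on its own terms, your proof is sound. The direction $(ii)\Rightarrow(i)$ is exactly the expected computation: the condition $\sum_{j\in J}|c_{i,j}|\le 1$ makes the graph map a linear isometry from $l_\infty^k$ onto $V$, and hyperconvexity transfers along isometries. For $(i)\Rightarrow(ii)$, the crux is the classification of finite-dimensional hyperconvex normed spaces as $l_\infty^k$; this is legitimate to invoke (for a normed space, hyperconvexity is equivalent to the binary intersection property for balls, and the Nachbin--Hanner--Kelley theorem identifies the finite-dimensional spaces with that property as exactly those whose unit ball is a parallelotope), but you should cite it precisely since it carries essentially all the weight of this implication. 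The subsequent bookkeeping with the matrix $(v_{i,l})$ is correct: the choice $\alpha_l=\sgn(v_{i,l})$ gives the row bounds $\sum_l|v_{i,l}|\le 1$, the choice $\alpha=e_l$ produces the distinguished rows $i_l$ with a single $\pm1$ entry, distinctness of the $i_l$ follows from the row bound, and the coefficients $c_{i,i_l}=v_{i,l}/v_{i_l,l}$ satisfy the required $\ell_1$-bound. One small step you leave implicit: this only shows $V$ is \emph{contained} in the graph $\{x: x_i=\sum_{j\in J}c_{i,j}x_j \text{ for } i\notin J\}$; equality follows since both are linear subspaces of dimension $k$, and it is worth saying so.
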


\begin{Prop}\label{Prop:StronglyConvex}
Let $n \ge 1$ and let $V$ denote any linear subspace of $l_{\infty}^n$. Then, $V$ is strongly convex if and only if one of the following three cases occurs:
\begin{enumerate}[(i)]
\item $V=\{0\}$,
\item $V=\R e$ where $e$ denotes a vertex of the hypercube $[-1,1]^n$ or
\item $V=l_{\infty}^n$.
\end{enumerate}
\end{Prop}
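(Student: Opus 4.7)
The plan is to verify the three stated cases directly and then prove the converse by computing which points of the form $\eps e_j$ lie in the metric interval $I(v,-v)$. For the ``if'' direction, cases $(i)$ and $(iii)$ are immediate from the definition. For $(ii)$, fix $V = \R e$ with $e \in \{\pm 1\}^n$: every coordinate of $se - te$ has absolute value $|s - t|$, so $d(se, te) = |s - t|$; for $z \in I(se, te)$, writing $\sig := d(z, se)$, the coordinate-wise triangle inequalities $|z_i - s e_i| + |z_i - t e_i| \ge |s - t|$ combined with $|z_i - s e_i| \le \sig$ and $|z_i - t e_i| \le |s - t| - \sig$ force $|z_i - s e_i| = \sig$ for every $i$, so $z_i = (s - \sig)e_i$ and hence $z = (s - \sig)\,e \in V$.

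The decisive ingredient for the converse is the following computation: for any $v \in l^n_\infty \setminus \{0\}$ and any $j \in I_n$, setting $M_j := \max_{i \ne j} |v_i|$, one has $d(\eps e_j, v) = \max(M_j, |\eps - v_j|)$ and $d(\eps e_j, -v) = \max(M_j, |\eps + v_j|)$. A short case analysis on whether $|v_j| \ge M_j$ or $|v_j| < M_j$ then shows that $\eps e_j \in I(v, -v)$ if and only if $|\eps| \le \alpha_j(v) := \|v\|_\infty - \min(|v_j|, M_j)$; crucially, $\alpha_j(v) = 0$ if and only if $|v_j| = M_j = \|v\|_\infty$, i.e. $|v_j| = \|v\|_\infty$ and there exists some $i \ne j$ with $|v_i| = \|v\|_\infty$.

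Now let $V$ be strongly convex with $V \ne \{0\}$ and $V \ne l^n_\infty$. Since $\{e_1, \dots, e_n\}$ spans $l^n_\infty$, there is some $j$ with $e_j \notin V$. For any $v \in V \setminus \{0\}$, linearity gives $-v \in V$ and strong convexity gives $I(v, -v) \subset V$; by the interval formula, $\eps e_j \in V$ for every $|\eps| \le \alpha_j(v)$, which together with $e_j \notin V$ and linearity of $V$ forces $\alpha_j(v) = 0$, hence $|v_j| = \|v\|_\infty > 0$. If $\dim V \ge 2$, choose linearly independent $v, w \in V$ and set $v' := v - (v_j / w_j)\,w$; then $0 \ne v' \in V$ and $v'_j = 0$, contradicting $|v'_j| = \|v'\|_\infty > 0$. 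So $\dim V = 1$, say $V = \R v$ with $v \ne 0$; this sub-case requires $n \ge 2$, since otherwise $V = l^n_\infty$. If some $e_j$ lay in $V$ then $V = \R e_j$, and applying the preceding argument to any $k \ne j$ (for which $e_k \notin V$) would give $|v_k| = \|v\|_\infty > 0$ whereas $v_k = 0$, a contradiction. Hence $e_j \notin V$ for every $j$, so $|v_j| = \|v\|_\infty$ for every $j$, and therefore $v / \|v\|_\infty \in \{-1, 1\}^n$, i.e. $V = \R e$ for a vertex $e$.

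The only nontrivial step is the interval identity in the second paragraph, which reduces to an elementary $l_\infty$ case analysis; everything else is a short linear-algebraic reduction.
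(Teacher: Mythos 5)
Your proof is correct, and it takes a genuinely different route from the paper's. The paper first invokes the fact that strongly convex subspaces are hyperconvex and then applies the classification of hyperconvex linear subspaces of $l_\infty^n$ (Theorem~\ref{Thm:HyperconvexSubspaces}, imported from \cite{Pav}) to put $V$ in the normal form $x_l = \sum_{i\le k} c_{l,i}x_i$ with $\sum_i |c_{l,i}|\le 1$; it then shows by exhibiting explicit points of metric intervals that every $c_{l,i}$ must have $|c_{l,i}|=1$, so the $\ell_1$-constraint forces $k\in\{0,1,n\}$. You instead work entirely from scratch: your identity $\eps e_j \in I(v,-v) \Leftrightarrow |\eps| \le \|v\|_\infty - \min(|v_j|, M_j)$ is an elementary (and correct) $l_\infty$ computation, and the conclusion $\alpha_j(v)=0$, i.e. $|v_j|=M_j=\|v\|_\infty$ for every $v\in V\setminus\{0\}$ whenever $e_j\notin V$, quickly pins down $\dim V\le 1$ and the vertex direction. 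What your approach buys is self-containedness --- you avoid both the external classification theorem and the unproved (in this paper) assertion that strong convexity implies hyperconvexity --- at the cost of a short case analysis; the paper's version is shorter modulo the cited machinery and has the advantage of staying in the coordinate normal form that the rest of Section~4 uses. The only step worth spelling out slightly more in a final write-up is the verification of the interval identity itself (the two cases $|v_j|\ge M_j$ and $|v_j|<M_j$), but both cases do check out.
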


\begin{proof}
Let $k=\dim V$. Since strongly convex subspaces are hyperconvex, by Theorem~ \ref{Thm:HyperconvexSubspaces}, we may assume that $V$ is of the form
\[
	V = \left \{ (x_1, \ldots, x_n) \in l_\infty^n : x_l = \sum_{i=1}^k c_{l,i} x_i \text{ for } l=k+1, \ldots n \right \}
\]
with $\sum_{i=1}^k |c_{l,i}| \leq 1$. Fix some $1 \leq i \leq k$ and consider 
\[
	x=(0, \ldots 0,1,0, \ldots 0, c_{k+1,i}, \ldots, c_{n,i}) \in V.
\]
We have $c_{l,i} \neq 0$ since otherwise
\[
	y=(0, \ldots 0,\tfrac{1}{2},0, \ldots 0, \tfrac{c_{k+1,i}}{2}, \ldots,\tfrac{c_{l-1,i}}{2},\tfrac{1}{2},\tfrac{c_{l+1,i}}{2}, \ldots, \tfrac{c_{n,i}}{2}) \in I(0,x) \setminus V.
\]
Now take $l$ such that $|c_{l,i}|$ is minimal. Then
\[
	y=(0, \ldots 0,|c_{l,i}|,0, \ldots 0, \tfrac{|c_{l,i}|}{|c_{k+1,i}|} c_{k+1,i}, \ldots, \tfrac{|c_{l,i}|}{|c_{n,i}|} c_{n,i}) \in I(0,x) \subset V
\]
and therefore $c_{l,i} = c_{l,i} \cdot |c_{l,i}|$, i.e. $|c_{l,i}|=1$. Hence one has $c_{l,i} \in \{\pm 1\}$ for all $l=k+1, \ldots, n$ and $i=1, \ldots, k$. Since $\sum_{i=1}^k |c_{l,i}| \le 1$, it thus follows that either $k=0$, which corresponds to $(i)$, $k=1$, which corresponds to $(ii)$ or $k =n$, which corresponds to $(iii)$.
\end{proof}

In the following, $\mathrm{relint}(S)$ denotes the relative interior of $S$ and $F_i$ denotes the facet $[-1,1]^n \cap \{x \in l_{\infty}^n : x_i=1\}$ of the unit ball $[-1,1]^n$ in $l_{\infty}^n$. Half-spaces are denoted by $H_v := \{ x \in \R^n : x \cdot v \ge 0 \}$ (with $ \cdot $ denoting the standard scalar product) and $\partial H_v$ being the boundary of $H_v$.

\begin{Lem}\label{Lem:IntersectingFacetsUnitCube}
Let $V$ denote any hyperconvex linear subspace of $l_{\infty}^n$ such that $V$ is not contained in any hyperplane of the form $\partial H_{\sigma e_i + \tau e_j}$. Then, the following hold:
\begin{enumerate}[(i)]
\item There is $(i,\sigma) \in I_n \times \{\pm 1\}$ such that $V \cap \mathrm{relint}(\sigma F_i) \neq \emptyset$. \label{it:IntersectingRelativeInterior}
\item If $V \neq l_{\infty}^n$, there is $i \in I_n$ and $\nu \in \R^n \setminus \{0\}$ such that $V \subset \partial H_{\nu}$ and
\[
\partial H_{\nu} \cap (F_i \cup (-F_i)) = \emptyset.
\]\label{it:NoIntersectingFacet}
\end{enumerate}
\end{Lem}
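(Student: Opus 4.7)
The plan is to use Theorem~\ref{Thm:HyperconvexSubspaces} to reduce to a concrete description of $V$ and then translate the hypothesis into strict coefficient bounds that drive both constructions.

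First I would fix, via Theorem~\ref{Thm:HyperconvexSubspaces}, a subset $J\subset I_n$ with $|J|=k=\dim V$ and real numbers $(c_{l,j})_{l\in I_n\setminus J,\,j\in J}$ satisfying $\sum_{j\in J}|c_{l,j}|\le 1$ such that
\[
V=\Bigl\{x\in l_\infty^n : x_l=\sum_{j\in J}c_{l,j}x_j\text{ for all }l\in I_n\setminus J\Bigr\}.
\]
The hypothesis that $V$ is not contained in any $\partial H_{\sigma e_i+\tau e_j}$ will then translate into two coefficient properties: $J\neq\es$, since otherwise $V=\{0\}$ would sit in every coordinate hyperplane; and, for every $l\in I_n\setminus J$ and every $j\in J$, the strict inequality $|c_{l,j}|<1$, since $|c_{l,j_0}|=1$ combined with $\sum_{j\in J}|c_{l,j}|\le 1$ would force $c_{l,j}=0$ for $j\neq j_0$, yielding the forbidden relation $x_l=\pm x_{j_0}$ on $V$.

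For \ref{it:IntersectingRelativeInterior} I would pick any $i\in J$ and define $v\in V$ by $v_i=1$, $v_j=0$ for $j\in J\setminus\{i\}$, and $v_l=c_{l,i}$ for $l\in I_n\setminus J$; the strict inequalities $|c_{l,i}|<1$ then give $|v_k|<1$ for every $k\neq i$, placing $v$ in $V\cap\mathrm{relint}(F_i)$.

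For \ref{it:NoIntersectingFacet}, assuming $V\neq l_\infty^n$ so that $J\subsetneq I_n$, the natural candidate normal associated with any $l\in I_n\setminus J$ is
\[
\nu^l:=e_l-\sum_{j\in J}c_{l,j}e_j\in V^\perp\setminus\{0\},
\]
which places $V$ inside $\partial H_{\nu^l}$; a direct computation of the range of $\nu^l\cdot x$ over $x\in F_l$ shows that $\partial H_{\nu^l}\cap(F_l\cup(-F_l))=\es$ is equivalent to the strict dominance $|\nu^l_l|>\sum_{k\neq l}|\nu^l_k|$, i.e.\ to $\sum_{j\in J}|c_{l,j}|<1$. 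When this strict inequality holds for some $l$, we obtain \ref{it:NoIntersectingFacet} with $i=l$ and $\nu=\nu^l$. The main obstacle will be the borderline situation where $\sum_{j\in J}|c_{l,j}|=1$ for every $l\in I_n\setminus J$. Here I expect one must exploit the freedom of choice in the index set $J$ afforded by Theorem~\ref{Thm:HyperconvexSubspaces}, or form a suitable linear combination of the vectors $\nu^l$ across different $l\notin J$, so as to produce a single normal in $V^\perp$ exhibiting strict dominance at some coordinate; handling this borderline case is where the bulk of the work will lie.
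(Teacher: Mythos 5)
Your treatment of part (i), and your construction of the candidate normals $\nu^l$ for part (ii), coincide with what the paper does: the paper likewise writes $V$ in the normal form of Theorem~\ref{Thm:HyperconvexSubspaces}, deduces $|c_{l,j}|<1$ from the hypothesis exactly as you do, exhibits $(1,0,\dots,0,c_{k+1,1},\dots,c_{n,1}) \in V\cap\mathrm{relint}(F_1)$ for (i), and for (ii) takes $\nu=(c_{k+1,1},\dots,c_{k+1,k},-1,0,\dots,0)$ and simply asserts that $\partial H_{\nu}\cap(F_{k+1}\cup(-F_{k+1}))=\emptyset$. As you computed, that assertion is equivalent to the strict inequality $\sum_{j}|c_{k+1,j}|<1$, whereas the hypothesis only yields $|c_{k+1,j}|<1$ termwise together with $\sum_{j}|c_{k+1,j}|\le 1$. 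So the ``borderline case'' you flagged is not handled by the paper either; you have located a genuine gap in the paper's own argument, not merely an incompleteness of yours.

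Moreover, the repair you hope for (a better choice of $J$, or a linear combination of the $\nu^l$ achieving strict dominance) cannot work in general, because statement (ii) is actually false in the borderline case. Take $n=3$ and $V=\{x\in l_{\infty}^3: x_3=\tfrac12(x_1+x_2)\}$, i.e. $k=2$ and $c_{3,1}=c_{3,2}=\tfrac12$. This $V$ is hyperconvex by Theorem~\ref{Thm:HyperconvexSubspaces} and is contained in no hyperplane of the form $\partial H_{\sigma e_i+\tau e_j}$ (nor $\partial H_{\sigma e_i}$), since the only hyperplane through the origin containing the $2$-dimensional subspace $V$ is $V$ itself, whose normal direction $(1,1,-2)$ is not proportional to any such vector. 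Hence every $\nu\neq 0$ with $V\subset\partial H_{\nu}$ satisfies $\partial H_{\nu}=V$; but $V$ meets all six facets, e.g. $(1,0,\tfrac12)\in V\cap F_1$, $(0,1,\tfrac12)\in V\cap F_2$, $(1,1,1)\in V\cap F_3$, and $V=-V$. So no pair $(i,\nu)$ as in (ii) exists. (One can check directly that this $V$ still fails to be weakly externally hyperconvex --- condition $(ii)$ of Lemma~\ref{lem:characterization WEH} fails for $x=(0,0,1)$, where $B(x,\tfrac12)\cap V=\{(\tfrac12,\tfrac12,\tfrac12)\}$, and $y=(1,-1,0)$ --- so the conclusion of Theorem~\ref{Thm:FiniteDimensionalWEHLinearSubspaces} is not contradicted; but its proof, like your proposal, needs an additional argument covering the subspaces with $\sum_{j}|c_{l,j}|=1$ for every $l$.)
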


\begin{proof}
By Theorem~ \ref{Thm:HyperconvexSubspaces}, $V$ can without loss of generality be written as
\[
V:= \left \{(x_1,\dots,x_k,\sum_{j=1}^k c_{k+1,j}x_j,\dots,\sum_{j=1}^k c_{n,j}x_j) : x_1,\dots,x_k \in \R \right \},
\]
where for every $m \in \{ k+1,\dots,n\}$, one has $\sum_{j=1}^k |c_{m,j}| \le 1$. It follows that $ V \subset \partial H_{\nu^m} $, where 
\[
\nu^m := (c_{m,1},\dots,c_{m,k},0,\dots,0,-1,0,\dots,0)
\]
(i.e. $-1$ in the $m$-th entry) and $\left\| \nu^m \right\|_1 = 1 + \sum_{j=1}^k |c_{m,j}| \le 2 = 2 \left\| \nu^m \right\|_{\infty}$. Since we assume that $V \not \subset  \partial H_{\sigma e_i + \tau e_j}$, it follows that for every $m \in \{ k+1,\dots,n\}$ and every $i \in \{1,\dots,k\}$ we have $|c_{m,i}| < 1$.
In particular, \eqref{it:IntersectingRelativeInterior} follows from the fact that
\[
(1,0,\dots,0,c_{k+1,1},\dots,c_{n,1}) \in V \cap \mathrm{relint}(F_1).
\]
Now, \eqref{it:NoIntersectingFacet} follows from the fact that $ V \subset \partial H_{\nu} $ where 
\[
\nu := (c_{k+1,1},\dots,c_{k+1,k},-1,0,\dots,0) \in \mathrm{relint}(-F_{k+1})
\]
and
\[
\partial H_{\nu} \cap (F_{k+1} \cup (-F_{k+1})) = \emptyset.
\]
\end{proof}

We now proceed to the characterization of weakly hyperconvex polyhedra in $l^n_{\infty}$.

\begin{Thm}\label{Thm:FiniteDimensionalWEHLinearSubspaces}
Let $n \ge 1$ and let $V$ denote any linear subspace of $l_{\infty}^n$. Then, $V$ is weakly externally hyperconvex in $l_{\infty}^n$ if and only if $V$ can be written as the intersection of hyperplanes of the form $\partial H_{\sigma e_i}$ or $\partial H_{\sigma e_i + \tau e_j}$ where $\sigma, \tau \in \{\pm 1\}$ and $i,j \in \{1, \cdots , n \} $.
Equivalently, $V$ is weakly externally hyperconvex in $l_{\infty}^n$ if and only if 
\[
V = \{0\}^k \times l^m_{\infty} \times S_{p_1} \times \cdots \times S_{p_q},
\]
where $S_{p_j}$ denotes a one-dimensional strongly convex linear subspace of $l^{p_j}_{\infty}$ for each $j \in \{1,\dots,q\}$. Note that then $n = k + m + \sum_j p_j$ and $\mathrm{dim}(V) = m + q$.
\end{Thm}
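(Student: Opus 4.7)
I would handle the equivalence of the two descriptions and the ``if'' direction first, then the ``only if'' direction by induction on $n$.

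The equivalence of the hyperplane and product characterizations is pure linear bookkeeping: a constraint $\partial H_{\sigma e_i}$ directly forces $x_i = 0$ (contributing to the $\{0\}^k$-block), while the constraints $\sigma x_i + \tau x_j = 0$ define signed equivalence relations whose consistent classes give $1$-dimensional strongly convex $S_{p_j}$-factors (in view of Proposition~\ref{Prop:StronglyConvex}), inconsistent classes collapse into the zero block, and uninvolved coordinates form the $l_\infty^m$-factor. Conversely, each $S_{p_j}$-factor $\R \cdot e$ with $e = (\epsilon_1, \dots, \epsilon_{p_j})$ a vertex of $[-1,1]^{p_j}$ is cut out within its coordinate block by the hyperplanes $\partial H_{\epsilon_k e_k - \epsilon_l e_l}$. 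Granting this equivalence, the ``if'' direction is immediate from Theorem~\ref{Thm:CellsInjectiveHullWEH}: such a $V$ is non-empty, is cut out by inequalities of the allowed form, and is proximinal because closed subsets of the finite-dimensional space $l_\infty^n$ admit nearest points.

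For the ``only if'' direction, I induct on $n$; the case $n = 1$ is trivial. For the inductive step let $V \in \mathcal{W}(l_\infty^n)$; if $V \in \{\{0\}, l_\infty^n\}$ we are done. Otherwise the key claim is that $V$ is contained in some hyperplane $H$ of the form $\partial H_{\sigma e_i}$ or $\partial H_{\sigma e_i + \tau e_j}$. Given the claim, $H$ is isometric to $l_\infty^{n-1}$, either directly (when $H = \{x_i = 0\}$) or after dropping the $x_j$-coordinate (when $H = \{x_j = \pm x_i\}$, since then $|x_j| = |x_i|$ is already captured by the remaining coordinates and the supremum norm is preserved). Weak external hyperconvexity descends to subspaces, as the definition depends only on the metric on $V$ and on external points, so $V \in \mathcal{W}(H) \cong \mathcal{W}(l_\infty^{n-1})$. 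The inductive hypothesis then represents $V$ as an intersection of good hyperplanes in $l_\infty^{n-1}$, and lifting each back to $l_\infty^n$ preserves its form (the involved coordinates are unaffected by the identification), so $V$ is the intersection of $H$ with these lifted hyperplanes, all of the required type.

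To establish the key claim, argue by contradiction: assuming no good hyperplane contains $V$, Lemma~\ref{Lem:IntersectingFacetsUnitCube}(ii) delivers $i \in I_n$ and $\nu \in \R^n$ with $V \subset \partial H_\nu$ and $\partial H_\nu \cap (F_i \cup (-F_i)) = \emptyset$, which forces $|\nu_i| > \sum_{j \neq i} |\nu_j|$, and the contradiction-hypothesis prevents $\nu$ from being proportional to $\sigma e_i$ or $\sigma e_i + \tau e_j$. Set $x_0 := \sgn(\nu_i) e_i$, so that $x_0 \cdot \nu = |\nu_i| \neq 0$ and $x_0 \notin V$; the constraint $V \subset \partial H_\nu$ combined with the $l_\infty$-distance-to-a-hyperplane formula pins down $d(x_0, V)$ and locates the nearest-point set $B(x_0, d(x_0, V)) \cap V$ on the ``downstream'' side of $\partial H_\nu$. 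One then selects $y \in V$ whose coordinates $y_j$ are sign-aligned with $\nu_j$ whenever $\nu_j \neq 0$, and verifies $d(x_0, a) + d(a, y) > d(x_0, y)$ for every nearest point $a$, contradicting Lemma~\ref{lem:characterization WEH}(ii). The main obstacle is this uniform strict inequality; the cleanest model is the $2$-dimensional line $V = \{(s, cs) : s \in \R\}$ with $|c| \in (0, 1)$, where $x_0 = (0, 1)$, $y = (-1, -c)$, and the unique nearest point $a = (1/(1+c), c/(1+c))$ give $d(x_0, y) = 1 + c$ but $d(x_0, a) + d(a, y) = (3+c)/(1+c)$, which is strictly larger for $c \in (0, 1)$. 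The general case reduces to this after projecting onto the plane spanned by $e_i$ and some $e_j$ with $\nu_j \neq 0$, choosing the off-plane coordinates of $y$ so they do not dominate the maximum defining $d(x_0, y)$.
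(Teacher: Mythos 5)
Your proposal follows the same skeleton as the paper's proof: the same bookkeeping between the two descriptions of $V$, the same appeal to Theorem~\ref{Thm:CellsInjectiveHullWEH} for the ``if'' direction (your observation that proximinality is automatic in finite dimensions is fine), and the same reduction, via the canonical isometry $\partial H \cong l_\infty^{n-1}$, to the case where $V$ lies in no hyperplane of the allowed form, at which point Lemma~\ref{Lem:IntersectingFacetsUnitCube} is invoked. You diverge only at the final contradiction: the paper realizes a codimension-two face $\tau F_j'$ of the unit cube, disjoint from $V$, as the intersection of three pairwise intersecting balls (two centered on $V$, one external ball meeting $V$), which contradicts weak external hyperconvexity directly; you instead try to violate condition $(ii)$ of Lemma~\ref{lem:characterization WEH} using $x_0 = \sgn(\nu_i)e_i$ and a suitable $y \in V$.

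That last step has a genuine gap. Your two-dimensional computation is correct, but the claimed reduction of the general case to it ``by projecting onto the plane spanned by $e_i$ and $e_j$'' does not work. First, the coordinate projection is only $1$-Lipschitz, so it shrinks all three quantities $d(x_0,a)$, $d(a,y)$, $d(x_0,y)$; a strict inequality in the projected picture does not lift back. Second, the image of $V$ under such a projection need not resemble the model line $\{(s,cs)\}$: for $V = \{(s,t,\tfrac{s+t}{2}) : s,t \in \R\} \subset l_\infty^3$, which is hyperconvex, lies in no hyperplane of the allowed form, and must therefore be excluded, the projection onto any coordinate plane containing $e_3$ is all of $\R^2$, so the model produces no contradiction there. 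Third, the nearest-point set $B(x_0,d(x_0,V)) \cap V$ is in general a positive-dimensional cube (it is a single point only when every $\nu_j$ with $j \ne i$ is nonzero), and your $y$ must defeat every point of it, which the model with its unique nearest point does not address. Finally, your prescription that the coordinates of $y$ be sign-aligned with $\nu_j$ whenever $\nu_j \ne 0$ is impossible for $0 \ne y \in V \subset \partial H_\nu$, since it would force $y \cdot \nu = \sum_j |y_j||\nu_j| > 0$. The underlying idea of contradicting Lemma~\ref{lem:characterization WEH}$(ii)$ is plausible---in the example above it does succeed with $x_0=(0,0,-1)$, $y=(1,-1,0)$ and unique nearest point $a=(-\tfrac12,-\tfrac12,-\tfrac12)$---but as written the argument is missing precisely at the crux of the theorem; the paper's explicit three-ball construction is what fills this hole.
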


\begin{proof}

First, it is easy to see that both representations for $V$ given in the statement of Theorem~\ref{Thm:FiniteDimensionalWEHLinearSubspaces} are equivalent. To see this, remark that if $V$ has the desired product representation, then using Proposition~\ref{Prop:StronglyConvex}, it is easy to express $V$ as an intersection of the desired form. Conversely, if $V$ can be written as such an intersection, then $V$ is given by a system of linear equalities which one can decompose into a set of minimal linear subsystems. Applying Proposition~\ref{Prop:StronglyConvex}, one obtains the desired product representation for $V$.

Note that the hyperplanes of $l^n_{\infty}$ of the form $\partial H_{\sigma e_i}$ or $\partial H_{\sigma e_i + \tau e_j}$ are exactly the ones to which Theorem~\ref{Thm:CellsInjectiveHullWEH} applies. Hence, whenever $V$ can be written as an intersection of such hyperplanes, it follows that $V$ is weakly externally hyperconvex. This proves one implication in Theorem~\ref{Thm:FiniteDimensionalWEHLinearSubspaces}.

We now assume that $V \in \mathcal{W}(l_{\infty}^n)$ and we show that $V$ can be written as \[
V = \bigcap_{(i,\sigma)} \partial H_{\sigma e_i} \cap  \bigcap_{(i,\sigma),(j,\tau)} \partial H_{\sigma e_i + \tau e_j}.
\]
If $V \subset W$, then $V \in \mathcal{W}(W)$. Now assume that $W = \partial H_{e_i}$ or $W= \partial H_{\sigma e_i + \tau e_j}$. Then there is a canonical bijective linear isometry 
\[
	\pi \colon W \to l_{\infty}^{n-1}, (x_1, \ldots, x_n) \mapsto (x_1, \ldots, x_{i-1},x_{i+1}, \ldots x_n)
\]
with $\pi(V) \in \mathcal{W}(l_{\infty}^{n-1})$ and we can iterate this process.

We can therefore without loss of generality assume that $V$ is not contained in any hyperplane of the form $\partial H_{\sigma e_i}$ or $\partial H_{\sigma e_i + \tau e_j}$. It follows by \eqref{it:IntersectingRelativeInterior} in Lemma~\ref{Lem:IntersectingFacetsUnitCube} and hyperconvexity of $V$ that there is $(i,\sigma) \in I_n \times \{\pm 1\}$ such that $V \cap \mathrm{relint}(\sigma F_i) \neq \emptyset$.

By \eqref{it:NoIntersectingFacet} in Lemma~\ref{Lem:IntersectingFacetsUnitCube}, there is a facet $\tau F_j$ of $[-1,1]^n$ such that $\tau F_j \cap V = \emptyset$. It follows that the facet $ \tau F_j':=\tau F_j \cap \sigma F_i$ of $\sigma F_i$ satisfies $\tau F_j' \cap V = \emptyset$.

But $\tau F_j'$ can be written as $\tau F_j'= \cap_{i \in \{1,2,3\}} B_i$ where we pick $v^{(0)} \in V \cap \mathrm{relint}(\sigma F_i)$ so that $[0,\infty)v^{(0)} \cap \partial H_{\tau e_j} \neq \emptyset$ and with
\begin{align*}
B_1 &= B(0,1), \\
B_2 &= B(R v^{(0)}, \| R v^{(0)} \|_{\infty} ) \text{ and } \\
B_3 &= B(\sigma e_i + \tau e_j + \widetilde{R} \tau e_j , \| \sigma e_i + \tau e_j + \widetilde{R} \tau e_j - ( \sigma e_i + \tau e_j ) \|_{\infty} ).
\end{align*}
Indeed, for $\widetilde{R}$ big enough, one has $B_3 \cap V \neq \emptyset$ since $V \not \subset \partial H_{e_j}$ and by the choice of $v^{(0)}$. For $\widetilde{R}$ chosen even bigger (if needed) and for $R$ big enough, one has $\sigma F_i = B_1 \cap B_2$ and $\tau F_j' = B_1 \cap B_2 \cap B_3$. It follows that the balls pairwise intersect and $V \cap B_1 \cap B_2 \cap B_3 = V \cap \tau F_j' = \emptyset$. This implies that $V \not \subset \mathcal{W}(l_{\infty}^n)$ which is a contradiction.
\end{proof}

Let $V$ be a finite dimensional real vector space and let $Q$ be any convex non-empty subset of $V$. The \textit{tangent cone} $\mathrm{T}_pQ$ to $Q$ at $p \in Q$ is defined to be the set $\overline{\cup_{n \in \mathbb{N}} ( p + n ( Q-p) ) }$ where $Q-p := \{ q-p : q \in Q\}$ and $nQ:=\{ nq : q \in Q\}$.

We can now proceed to the following:

\begin{Thm}\label{Thm:FiniteDimensionalWEHConvexPolyhedra}
Suppose that $Q$ is a convex polyhedron in $l_{\infty}^n$ with non-empty interior.
Then, the following are equivalent:
\begin{enumerate}[(i)]
\item $Q$ is given by a finite system of inequalities of the form $\sig x_i \le C$ or $\sig x_i + \tau x_j \le C$ with $|\sig|,|\tau| = 1$ and $C \in \R$, \label{it:ConvexPolyhedraItem1}
\item $Q \in \mathcal{W}(l_{\infty}^n)$.\label{it:ConvexPolyhedraItem2}
\end{enumerate}
\end{Thm}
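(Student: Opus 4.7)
The plan is to prove the two implications separately. The forward direction \ref{it:ConvexPolyhedraItem1}$\Rightarrow$\ref{it:ConvexPolyhedraItem2} is a direct application of Theorem~\ref{Thm:CellsInjectiveHullWEH}: any closed convex set in a finite-dimensional normed space is proximinal (closed bounded sets being compact), and the inequalities defining $Q$ are exactly of the form required by that theorem. The substance lies in the converse: assuming $Q \in \mathcal{W}(l_{\infty}^n)$ has a minimal representation $Q = \bigcap_{k=1}^N \{x \in l_\infty^n : \nu_k \cdot x \le C_k\}$, I would show that each $\nu_k$ must be a positive multiple of some $\sigma e_i$ or $\sigma e_i + \tau e_j$, after which the inequalities can be rescaled to match \ref{it:ConvexPolyhedraItem1}.

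Fix an index $k$ and pick $p$ in the relative interior of the facet $F_k = Q \cap \{x : \nu_k \cdot x = C_k\}$, so that $\nu_{k'} \cdot p < C_{k'}$ for every $k' \neq k$. A standard convexity calculation then identifies the tangent cone with the closed half-space $H^+ := \{x : \nu_k \cdot (x-p) \le 0\}$, realized as the closure of the increasing union $\bigcup_n Q_n$ with $Q_n := p + n(Q-p)$. Each $Q_n$ is the image of $Q$ under the homothety $\lambda_n : x \mapsto p + n(x-p)$; since this is a bijective isometry from $(l_\infty^n, n \cdot d)$ to $(l_\infty^n, d)$ and weak external hyperconvexity is preserved both by isometries and by metric rescaling, we obtain $Q_n \in \mathcal{W}(l_\infty^n)$. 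Being a closed subset of a finite-dimensional normed space, $H^+$ is proper, so Proposition~\ref{Prop:IncreasingSequenceWEH} yields $H^+ \in \mathcal{W}(l_\infty^n)$.

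Applying the same argument to the reflected polyhedron $Q' := 2p - Q$, which is the isometric image of $Q$ under $x \mapsto 2p-x$ (and therefore lies in $\mathcal{W}(l_\infty^n)$) shows that the opposite half-space $H^- = \{x : \nu_k \cdot (x-p) \ge 0\}$ also belongs to $\mathcal{W}(l_\infty^n)$. Since $l_\infty^n$ is proper and hyperconvex, Proposition~\ref{prop:IntersectionWEH} then gives $H^+ \cap H^- = \{x : \nu_k \cdot x = C_k\} \in \mathcal{W}(l_\infty^n)$. Translating by $-p$ (an isometry of $l_\infty^n$) turns this affine hyperplane into a linear hyperplane in $\mathcal{W}(l_\infty^n)$, and invoking Theorem~\ref{Thm:FiniteDimensionalWEHLinearSubspaces} forces $\nu_k$ to be proportional to some $\sigma e_i$ or $\sigma e_i + \tau e_j$; rescaling the inequality $\nu_k \cdot x \le C_k$ yields the required form.

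The main obstacle in this plan is the tangent cone step: one must verify carefully that the homothetic copies $Q_n$ remain weakly externally hyperconvex and that the limiting half-space inherits this property via Proposition~\ref{Prop:IncreasingSequenceWEH}. Once this and the symmetric reflection argument are in hand, the classification of each normal $\nu_k$ reduces to Theorem~\ref{Thm:FiniteDimensionalWEHLinearSubspaces} applied to the bounding hyperplane produced by Proposition~\ref{prop:IntersectionWEH}.
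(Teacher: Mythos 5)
Your proposal is correct and follows essentially the same route as the paper: tangent cones at relative-interior points of facets realized as increasing unions of homothetic copies, Proposition~\ref{Prop:IncreasingSequenceWEH} to get the supporting half-space in $\mathcal{W}(l_\infty^n)$, Proposition~\ref{prop:IntersectionWEH} to get its boundary hyperplane, and Theorem~\ref{Thm:FiniteDimensionalWEHLinearSubspaces} to pin down the normal. You merely spell out two steps the paper leaves implicit (the rescaling argument showing each homothetic copy $Q_n$ is weakly externally hyperconvex, and the reflection through $p$ producing the opposite half-space), both of which are valid.
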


\begin{proof}
The fact that \eqref{it:ConvexPolyhedraItem1} implies \eqref{it:ConvexPolyhedraItem2} is a direct consequence of Theorem~\ref{Thm:CellsInjectiveHullWEH}. On the other hand, the fact that \eqref{it:ConvexPolyhedraItem2} implies \eqref{it:ConvexPolyhedraItem1} follows by considering for each point $p$ in the relative interior of a facet of $Q$, the tangent cone $\mathrm{T}_pQ$. By the choice of $p$, $\mathrm{T}_pQ = H$ where $H$ is a closed half-space and from Proposition~\ref{Prop:IncreasingSequenceWEH} it follows that $H \in \mathcal{W}(l_{\infty}^n)$. Furthermore, $\partial H = H \cap (-H) \in \mathcal{W}(l_{\infty}^n)$ by Proposition~\ref{prop:IntersectionWEH}. Now, Theorem~\ref{Thm:FiniteDimensionalWEHLinearSubspaces} implies that $\partial H$ is given by an equality of the form $\sig x_i = C$ or $\sig x_i + \tau x_j = C$ with $\sigma,\tau \in \{ \pm 1 \}$ and $C \in \R$. Hence, $H$ is given by an inequality of the desired form. Since $Q$ can be written as the intersection of all such tangent cones $\mathrm{T}_pQ$, the result follows. 
\end{proof}

\begin{Lem}\label{Lem:lem1}
	Let $X_1=l_\infty^n$ and $X_2=l_\infty^m$. Moreover let $V$ be a linear subspace of both $X_1$ and $X_2$. Then for $x\in X_1$ and $r\geq 0$ we have
\[
	B(x,r) \cap X_2 = \bigcup_{y \in B^1(x,r) \cap V} B^2(y,r-d(x,y)).
\]
	Moreover, $B(x,r) \cap X_2$ is connected.
\end{Lem}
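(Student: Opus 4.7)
The plan is to prove the displayed equality by two inclusions and then leverage the description as a union to obtain connectedness. Throughout, recall the formula for the gluing metric: for $x \in X_1$ and $z \in X_2$,
\[
d(x,z) = \inf_{a \in V} \bigl\{ d^1(x,a) + d^2(a,z) \bigr\}.
\]

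For the inclusion $\supset$, if $y \in B^1(x,r) \cap V$ and $z \in B^2(y, r - d^1(x,y))$, then by the formula above
\[
d(x,z) \le d^1(x,y) + d^2(y,z) \le d^1(x,y) + \bigl(r - d^1(x,y)\bigr) = r,
\]
so $z \in B(x,r) \cap X_2$. This direction is immediate and does not use finite dimensionality.

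For the inclusion $\subset$, fix $z \in B(x,r) \cap X_2$; I need to find some $y \in V$ realizing the infimum. Here I would use that $V$ is a linear subspace of $l_\infty^n$ and therefore finite-dimensional. The function $\varphi \colon V \to \R$ defined by $\varphi(a) := d^1(x,a) + d^2(a,z)$ is continuous, and by the reverse triangle inequality $\varphi(a) \ge \|a\|_\infty - \|x\|_\infty + \|a\|_\infty - \|z\|_\infty$, hence $\varphi$ is coercive on the finite-dimensional space $V$. Therefore the infimum is attained at some $y \in V$. Then $d^1(x,y) + d^2(y,z) = d(x,z) \le r$, which gives both $y \in B^1(x,r) \cap V$ and $z \in B^2(y, r - d^1(x,y))$.

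For connectedness, the key observation is that every summand $B^2(y, r - d^1(x,y))$ in the union is a closed ball in $l_\infty^m$, hence convex and connected, and contains the point $y \in V \cap B^1(x,r)$. Moreover $V \cap B^1(x,r)$ is itself convex as the intersection of a linear subspace with a ball in a normed space, hence connected. Applying the standard fact that a union of connected sets each meeting a common connected set is connected (take the common connected set to be $V \cap B^1(x,r)$, viewed as a subset of $X_2$ via the identification $V \subset X_2$) yields connectedness of $B(x,r) \cap X_2$. I expect no serious obstacle: the only subtle point is the attainment of the infimum in the $\subset$ direction, which is handled cleanly by finite dimensionality of $V$ together with coercivity.
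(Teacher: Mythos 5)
Your proof is correct and follows essentially the same route as the paper: both establish the set equality by showing the infimum in the gluing-metric formula is attained at some $y\in V$ (the paper invokes properness of $V$, you invoke finite dimensionality plus coercivity, which amounts to the same thing), and both deduce connectedness from the fact that the balls $B^2(y,r-d(x,y))$ all meet the connected set $B^1(x,r)\cap V$. No gaps.
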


\begin{proof}
	Since $V$ is proper, for any $ x \in X_1$ and $ x'\in X_2$ there is some $y \in V$ with $d(x,x')=d(x,y)+d(y,x')$. Since $B^1(x,r) \cap V$ is connected, this is also true for $B(x,r) \cap X_2$.
\end{proof}

If $V$ is any finite dimensional real vector space and $Q$ is any convex non-empty subset of $V$, we say that $e \in Q$ is an \textit{extreme point} of $Q$, and we write $e \in \mathrm{ext}(Q)$, if for any $c,c' \in Q$ and $t \in [0,1]$ such that $e = (1-t) c + t c'$, it follows that $t \in \{0,1\}$. Finally, by a \textit{convex polytope} in $V$, we mean a bounded intersection of finitely many closed half-spaces or equivalently the convex hull of finitely many points. A compact convex set in $V$ is a convex polytope if and only if it has finitely many extreme points (cf. \cite[Section~3.1]{Gru}).

\begin{Lem}\label{Lem:lem2}
	Let $X_1=l_\infty^n$ and $X_2=l_\infty^m$. Moreover let $V$ be a linear subspace of both $X_1$ and $X_2$ such that $V \neq X_1,X_2$ and
\[
X = X_1 \sqcup_V X_2
\]
is hyperconvex. Then for $x\in X_1$ and $r > d(x,V)$ the set $B(x,r) \cap X_2$ is a cuboid.
\end{Lem}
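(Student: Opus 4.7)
By Proposition~\ref{Prop:Cuboids}, it is enough to show that $B(x,r)\cap X_2\in\mathcal{E}(X_2)$. Setting $s:=d(x,V)$, my plan is to reach this via the Section~3 machinery. The key preparatory sub-claim is that $V$ is weakly externally hyperconvex in both $X_1$ and $X_2$. Granting this, Theorem~\ref{thm:gluing along WEH} yields $B^1(x,s)\cap V\in\mathcal{E}(X_2)$, Proposition~\ref{Prop:Cuboids} identifies this projection face with a cuboid $\prod_{j=1}^{m} I_j\subset l^m_\infty$, and Lemma~\ref{lem:balls} gives
\[
B(x,r)\cap X_2 \;=\; B^2\bigl(B^1(x,s)\cap V,\,r-s\bigr).
\]
A direct coordinatewise computation then shows that the closed $(r-s)$-neighborhood of $\prod_{j} I_j$ in $l^m_\infty$ equals $\prod_{j} B(I_j,r-s)$, again a cuboid, completing the reduction.

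For the sub-claim $V\in\mathcal{W}(X_1)\cap\mathcal{W}(X_2)$ my plan is the following. Given a WEH test $(\tilde x,\{(x_i,r_i)\}_{i\in I})$ for $V\subset X_1$, I would augment the family, in the ambient space $X$, by a single ball $B(x_0,\eps)$ centered at a point $x_0\in X_2$ with $d^2(x_0,V)=\eps$. By the gluing distance formula, $B(x_0,\eps)\cap X_1$ is contained in an arbitrarily thin $X_1$-neighborhood of $V$, so hyperconvexity of $X$ produces a point $z_\eps$ in the augmented intersection; properness of $X$ (inherited from finite-dimensionality of $X_1$ and $X_2$) then delivers a convergent subsequence with limit $z^\ast \in V\cap\bigcap_{i} B^1(x_i,r_i)\cap B^1(\tilde x,r)$. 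The argument for $V\in\mathcal{W}(X_2)$ is symmetric.

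The hard part will be the positioning of $x_0$: the pairwise-distance hypotheses of hyperconvexity in $X$ require $x_0$ to sit near a common witness in $V$ realizing both $\bigcap_{i} B^V(x_i,r_i)$ and the constraint $d^1(\,\cdot\,,\tilde x)\le r$, and the existence of such a witness is essentially the WEH conclusion itself. A direct alternative---proving EH of $B(x,r)\cap X_2$ in $X_2$ by lifting test balls to $X$ and invoking hyperconvexity---stalls analogously, since the produced candidate point may lie in $X_1\setminus V$, and the symmetrization trick from Proposition~\ref{prop:gluing copies} is unavailable here because $X_1$ and $X_2$ need not be isometric.
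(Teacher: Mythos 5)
Your reduction, \emph{granted} the sub-claim $V\in\mathcal{W}(X_1)\cap\mathcal{W}(X_2)$, is sound: Proposition~\ref{prop:externally hyperconvex balls} (the ``only if'' half of Theorem~\ref{thm:gluing along WEH}) would give $B^1(x,s)\cap V\in\mathcal{E}(X_2)$, Lemma~\ref{lem:balls} gives $B(x,r)\cap X_2=B^{2}(B^1(x,s)\cap V,r-s)$, and by Lemma~\ref{lem:properties nbhds}$(ii)$ together with Proposition~\ref{Prop:Cuboids} this neighborhood is again a cuboid. The problem is that the sub-claim is precisely Lemma~\ref{Lem:lem3} of the paper, and there its proof \emph{depends on} the present lemma: the cuboid structure of $B(x,r)\cap X_2$ is what lets one locate the gate-like points $\bar a^{\nu}$ in $B(x,d(x,V))\cap V$ and verify the two conditions of Lemma~\ref{lem:characterization WEH}. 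You have inverted the logical order, and your sketch for proving the sub-claim independently is, as you yourself diagnose, circular: to add the auxiliary ball $B(x_0,\eps)$ you must place $x_0$ at distance $\eps$ from a point of $V\cap\bigcap_i B(x_i,r_i)\cap B(\tilde x,r)$, whose non-emptiness is exactly the weak external hyperconvexity being claimed. So the central step of the lemma remains unproven.

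The paper's own argument avoids $\mathcal{W}$-considerations entirely and is worth internalizing as it is quite different. Writing $Q:=B(x,r)\cap X_2$, one first shows $Q$ is convex by a direct computation with the gluing metric (projecting the endpoints $p,q$ to $\bar p,\bar q\in V$ realizing $d(x,p)$ and $d(x,q)$, and comparing $z=p+t(q-p)$ with $\bar z=\bar p+t(\bar q-\bar p)\in V$). Next, for $y\in X_2$ with $d(x,y)\le r+d(y,V)$ the set $B(y,d(y,V))\cap B(x,r)$ is an admissible, hence externally hyperconvex, subset of the hyperconvex space $X_2$, so by Proposition~\ref{Prop:Cuboids} it is a cuboid; thus $Q$ is locally a cuboid off $V$. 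A tangent-cone and extreme-point count then shows $Q$ is a convex polytope, that $V$ (meeting the interior of $Q$ and having codimension at least one) contains no facet of $Q$, and hence that $Q$ is an intersection of cuboidal tangent cones based at points of $Q\setminus V$, i.e.\ a cuboid. I would recommend abandoning the $\mathcal{W}$-first route and adopting this local-to-global argument.
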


\begin{proof}
We first show that $Q := B(x,r) \cap X_2$ is convex. Let $p,q \in Q$, there are then $\bar{p},\bar{q} \in V$ such that 
\begin{equation}\label{eq:EquationIntervals}
d(x,\bar{p}) + d(\bar{p},p)= d(x,p) \ \text{  as well as  } \ d(x,\bar{q}) + d(\bar{q},q)= d(x,q).
\end{equation}
Consider now any point $z:= p + t(q-p) \in p + [0,1](q-p) \subset X_2$ and let $\bar{z}:= \bar{p} + t(\bar{q}-\bar{p}) \in V$. On the one hand
\[
d(z, \bar{z}) \le (1-t) d( p, \bar{p}) + t d( q, \bar{q})
\]
and on the other hand
\[
d(x,\bar{z}) \le (1-t) d( x, \bar{p}) + t d( x, \bar{q}).
\]
Summing the above two inequalities, applying the triangle inequality and using \eqref{eq:EquationIntervals}, we obtain
\[
d(x,z) \le  (1-t) d(x,p) + t d(x,q) \le r.
\]
It follows that $z \in Q$ and shows that $Q$ is convex. 

Since $X$ is hyperconvex, for every $y \in X_2$ with $d(x,y) \leq r + d(y,V)$, the set $B(y,d(y,V)) \cap B(x,r) \cap X_2 = B(y,d(y,V)) \cap B(x,r) \in \mathcal{E}(X_2)$ and is therefore a cuboid. Hence $B(x,r) \cap X_2 \setminus V$ is locally a cuboid.

To see that $Q$ is a convex polytope, we show that $Q$ has only finitely many extreme points. Observe that $Q$ has only finitely many extreme points inside $V$ since $Q \cap V = B^1(x,r) \cap V$ and the right-hand side is a convex polytope contained in $V$. Now, note that since $Q$ is locally a cuboid outside of $V$, $Q$ has only finitely many different tangent cones at points outside of $V$, and this up to translation (of such cones). It is easy to see that $Q$ cannot have both a cone and a nontrivial translate of this cone as tangent cones. It follows that $Q$ can only have finitely many different tangent cones at points outside $V$. In particular, $Q$ has only finitely many extreme points and it is thus a convex polytope. Furthermore, $V$ meets the interior of $Q$ and $V$ has codimension at least one in $X_2$, hence $V$ cannot contain any facet of $Q$. It follows that $Q$ can be written as the intersection of tangent cones to $Q$ at points in $Q \setminus V$. Once again, since $Q$ is locally a cuboid outside of $V$, it follows that $Q$ is a cuboid.
\end{proof}

\begin{Lem}\label{Lem:lem3}
	Let $X_1=l_\infty^n$ and $X_2=l_\infty^m$. Moreover let $V$ be a linear subspace of both $X_1$ and $X_2$ such that $V \neq X_1,X_2$ and
\[
X = X_1 \sqcup_V X_2
\]
is hyperconvex. Then $V$ must be weakly externally hyperconvex in $X$.
\end{Lem}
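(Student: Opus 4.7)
The plan is to verify the two conditions of Lemma~\ref{lem:characterization WEH} for $V$ viewed as a subset of $X$. By Lemma~\ref{lem:list properties gluing}, $V$ is hyperconvex, so that lemma applies. Fix $x \in X$ with $s := d(x, V)$; the case $x \in V$ is immediate, and by symmetry one may assume $x \in X_1 \setminus V$. The starting observation is that $C := B(x, s) \cap V$ is a cuboid in $X_2$: by Lemma~\ref{Lem:lem2}, for every $\epsilon > 0$ the set $B(x, s+\epsilon) \cap X_2$ is a cuboid in $X_2$, and since every point of $X_2 \setminus V$ lies at distance strictly greater than $s$ from $x$, the nested intersection over $\epsilon > 0$ yields exactly $C$. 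Hence $C$ is a cuboid in $X_2$ and in particular $C \in \mathcal{E}(X_2)$ by Proposition~\ref{Prop:Cuboids}.

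Condition (i) then follows at once: given a family $\{(x_i, r_i)\}_{i \in I}$ in $V \times [0, \infty)$ with $d(x_i, x_j) \le r_i + r_j$ and $d(x_i, C) \le r_i$, the same inequalities hold for $d_2$ since $V \subset X_2$ isometrically, so external hyperconvexity of $C$ in $X_2$ furnishes $C \cap \bigcap_i B^2(x_i, r_i) \neq \emptyset$; the inclusion $C \subset V$ ensures this set equals $C \cap \bigcap_i B^V(x_i, r_i)$, giving $C \in \mathcal{E}(V)$. For condition (ii), given $y \in V$ set $r := d(x, y) = d_1(x, y) \ge s$ (the case $r = s$ is trivial, taking $a = y$); Lemma~\ref{Lem:lem2} then gives that $C_r := B(x, r) \cap X_2$ is also a cuboid in $X_2$ and contains both $y$ and $C$. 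The key intermediate claim is
\[
C_r \;=\; B^2(C, r-s),
\]
where $B^2(C, r-s)$ denotes the closed $(r-s)$-neighborhood of $C$ in $X_2$. One inclusion is immediate from Lemma~\ref{Lem:lem1}: each $v \in C$ has $d_1(x, v) = s$, so $B^2(v, r - s) \subset C_r$. Granting the equality, $d_2(y, C) \le r - s$, and the nearest-point projection $a$ of $y$ to the cuboid $C$ in $X_2$ satisfies $d(x, a) \le s$ and $d(a, y) \le r - s$; the triangle inequality $r \le d(x, a) + d(a, y)$ then forces equality throughout, yielding (ii).

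The main obstacle is the reverse inclusion $C_r \subset B^2(C, r - s)$. To prove it I would compare coordinate extents: writing $C_r = \prod_i [\alpha^r_i, \beta^r_i]$ and $C = \prod_i [\alpha^s_i, \beta^s_i]$, Lemma~\ref{Lem:lem1} expresses $\beta^r_i = r + \sup_v (v_i - d_1(x, v))$ with $v$ ranging over $V \cap B^1(x, r)$, and analogously for $\alpha^r_i$. The fact, guaranteed by Lemma~\ref{Lem:lem2}, that $C_{r'}$ is a cuboid for \emph{every} intermediate radius $r' \in [s, r]$ constrains this one-parameter family of extents to grow linearly in $r'$, which forces the supremum defining $\beta^r_i$ to be attained already at some $v \in V \cap B^1(x, s) = C$; this yields $\beta^r_i - \beta^s_i \le r - s$ and, symmetrically, $\alpha^s_i - \alpha^r_i \le r - s$. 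This step is delicate because it exploits the whole continuum of cuboid conclusions of Lemma~\ref{Lem:lem2} rather than a single instance, and it is effectively the analytical content of the fact that hyperconvexity of $X$ forces $V$ to be weakly externally hyperconvex in $X_1$.
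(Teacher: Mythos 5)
Your skeleton matches the paper's: both arguments verify the two conditions of Lemma~\ref{lem:characterization WEH}, both obtain condition $(i)$ by writing $C=B(x,s)\cap V$ as the nested intersection $\bigcap_{\epsilon>0}B(x,s+\epsilon)\cap X_2$ of cuboids, and both reduce condition $(ii)$ to the inclusion $B(x,r)\cap X_2\subset B^2(C,r-s)$. The problem is that this inclusion --- which you yourself flag as ``the analytical content'' of the lemma --- is exactly the step you do not prove. Your proposed justification is that, since $C_{r'}=B(x,r')\cap X_2$ is a cuboid for every $r'\in[s,r]$, the extents $\beta_i^{r'}=r'+g_i(r')$ with $g_i(r')=\sup\{v_i-d(x,v):v\in V\cap B^1(x,r')\}$ must ``grow linearly'', which in turn forces the supremum to be attained on $V\cap B^1(x,s)$. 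Neither implication is argued. The function $g_i$ is automatically non-decreasing, so the extents always grow at least with slope $1$ in $r'$; what is needed is the upper bound $g_i(r')\le g_i(s)$, and the cuboid property of each individual slice $C_{r'}$ does not visibly deliver it (nor would ``linear growth'' alone: linear with slope greater than $1$ is compatible with $g_i$ strictly increasing). As written, the key claim is asserted, not proved.

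The paper closes this gap by a mechanism absent from your proposal, and it uses an ingredient you never invoke: the normal form of the hyperconvex subspace $V$ from Theorem~\ref{Thm:HyperconvexSubspaces}, namely $x_l=\sum_i c_{l,i}x_i$ with $\sum_i|c_{l,i}|\le 1$. For each vertex $a^\nu$ of the cuboid $A=B(x,r)\cap X_2$ one takes a foot point $\bar a^\nu\in V$ with $d(a^\nu,\bar a^\nu)=r-d(x,\bar a^\nu)$ (Lemma~\ref{Lem:lem1}); since $B^2(\bar a^\nu,t_\nu)\subset A$ and $a^\nu$ is a vertex, the foot must lie on the diagonal $\bar a^\nu=a^\nu-t_\nu\nu$. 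Comparing the feet of the two vertices $\nu^{+},\nu^{-}$ that differ only in a dependent coordinate $l$, and using $|\sum_i c_{l,i}\nu_i|\le 1$ together with the fact that a ball centered on $V$ and contained in $A$ has $l$-extent at most $a_l^{+1}-a_l^{-1}$, pins down $t_\nu=\tfrac12(a_l^{+1}-a_l^{-1})=r-s$; hence all of $A$ lies in a single ball $B^2(\bar a,r-s)$ with $\bar a\in B(x,s)\cap V$, which is even stronger than your claimed $C_r=B^2(C,r-s)$. To complete your route you would have to supply an argument of this kind (in particular, some use of the structure of $V$ as a hyperconvex subspace); the ``linearity of extents'' heuristic does not substitute for it.
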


\begin{proof}
	Note that it is enough to show that $V$ is weakly externally hyperconvex in $X_1$.
	First we introduce coordinates: 
\[
	V= \left \{ x \in l_\infty^m : \text{ for every } l \in \{ k+1, \ldots, n \}, \text{ one has } x_l = \sum_{i=1}^k c_{l,i} x_i  \right \}.
\]
	As $V$ must be hyperconvex by Lemma~\ref{lem:list properties gluing}, we may assume that $\sum_{i=1}^k |c_{l,i}| \leq 1$ for all $l$ by Theorem~ \ref{Thm:HyperconvexSubspaces}. For $x \in X_1$ and $r > d(x,V)$ and by Lemma~\ref{Lem:lem2}, we can write
\[
	A:=B(x,r) \cap X_2 = \prod_{i=1}^m [a_i^{-1},a_i^{1}].
\]
	Let $\nu$ be a vertex of $[-1,1]^m$, i.e. $\nu \in \{-1,1\}^m$, and $a^\nu = (a_1^{\nu_1},\dots,a_m^{\nu_m} )$ the corresponding vertex of $A$.
	By Lemma~\ref{Lem:lem1}, for every $a^\nu$ there must be some $\bar{a}^\nu \in V$ such that $d(a^\nu,\bar{a}^\nu) = r - d(x,\bar{a}^\nu)$. Since $a^\nu$ is a vertex of the cuboid $A$, $\bar{a}^\nu$ must lie on the diagonal $a^\nu + \mathbb{R}\nu$ or more precisely 
\begin{align}\label{eq:corner}
	\bar{a}^\nu = a^\nu - t_\nu \nu
\end{align}	
	for $t_\nu = d(a^\nu, \bar{a}^\nu) \geq 0$. Hence for every $l \in \{k+1, \ldots, m \}$, one has
\begin{align}\label{eq:in V}
	\bar{a}_l^\nu = \sum_{i=1}^k c_{l,i} \bar{a}_i^{\nu_i} = \sum_{i=1}^k c_{l,i} a_i^{\nu_i} - t_\nu \sum_{i=1}^k c_{l,i} \nu_i.
\end{align}
Fix now some $l \in \{k+1, \ldots, m \}$. Then for $\nu \in \{-1,1\}^m$ consider $\nu^-, \nu^+ \in \{-1,1\}^m$ with $\nu^+_l =1$, $\nu^-_l = -1$ and $\nu^+_i = \nu^-_i =\nu_i$ for $i \neq l$. Clearly $\nu$ coincides with either $\nu^+$ or $\nu^-$. From \eqref{eq:corner} we get
\begin{align*}
	\bar{a}_l^{\nu^+} - \bar{a}_l^{\nu^-} 
	= (a_l^{\nu^+_l} - t_{\nu^+}\nu^+_l) - (a_l^{\nu^-_l} - t_{\nu^-}\nu^-_l) 
	= a_l^1 - a_l^{-1} - t_{\nu^+}- t_{\nu^-}.
\end{align*}
Observe that for $\nu^+, \nu^-$ on the right-hand side of (\ref{eq:in V}) all parameters except $t_{\nu^+}, t_{\nu^-}$ are equal. Hence setting $c=\sum_{i=1}^k c_{l,i} \nu_i \in [-1,1]$ we get
\[
\bar{a}_l^{\nu^+} - \bar{a}_l^{\nu^-} = c(t_{\nu^+}- t_{\nu^-}) \leq |t_{\nu^+}- t_{\nu^-}|.
\]
Let us assume that $t_{\nu^+} \geq t_{\nu^-}$ (the other case is analogous). We then get 
\begin{align*}
a_l^{+1}-a_l^{-1} - t_{\nu^+} - t_{\nu^-} &\leq t_{\nu^+}-t_{\nu^-}, 
\end{align*}
or equivalently $2t_{\nu^+} \geq a_l^{+1}-a_l^{-1}$. But this inequality must be an equality since $B^2(\bar{a}^{\nu^+},t_{\nu^+}) \subset A$. Moreover we have $a^{\nu^+}, a^{\nu^-} \in B^2(\bar{a}^{\nu^+},t_{\nu^+})$. We conclude that $\bar{a}^\nu$ can always be chosen such that 
\[
d(a^\nu,\bar{a}^\nu) = t_\nu = \frac{1}{2}\left(a_l^{+1}-a_l^{-1}\right).
\]
Especially we have $\bar{a}_l^\nu = \frac{1}{2}\left(a_l^{+1}+a_l^{-1}\right)$. Hence, all vertices of $A$, and by convexity all points $a \in A$ are contained in a ball $B^2(\bar{a},t)$ with $\bar{a} \in B^1(x,r)\cap V$ and maximal radius 
\[
t=r-d(x,\bar{a})=\frac{1}{2}\left(a_l^{+1}-a_l^{-1}\right).
\]
Note that for $y \in B^1(x,r)\cap V$ the radius $r-d(x,y)$ is maximal if and only if $d(x,y)=d(x,V)$ and therefore $\bar{a} \in B(x,d(x,V))\cap V$.

We now use Lemma~\ref{lem:characterization WEH} to conclude that $V$ is weakly externally hyperconvex in $X_1$. First we have
\[
	B(x,d(x,V)) \cap V = \bigcap_n B(x,d(x,V)+\tfrac{1}{n}) \cap X_2 \in \mathcal{E}(X_2)
\]
using Lemma~\ref{Lem:lem2} and therefore $B(x,d(x,V)) \cap V \in \mathcal{E}(V)$. Furthermore, for $y \in V$ assume that $r=d(x,y) > d(x,V)$. Then by the above there is some $a \in V \cap B(x,d(x,V))$ such that 
\[
	r = d(x,y) \leq d(x,a) + d(a,y) \leq d(x,V) + r - d(x,V) = r,
\]
i.e. $d(x,y) = d(x,a) + d(a,y)$.
\end{proof}

Finally,

\begin{proof}[Proof of Theorem~\ref{Thm:CharacterizationGluingLInfinity}]
	If $X$ is hyperconvex, by Lemma~\ref{Lem:lem3}, the subspace $V$ is weakly externally hyperconvex in $X_1$ and in $X_2$, we can thus apply Theorem~\ref{Thm:FiniteDimensionalWEHLinearSubspaces} and write 
\[
	V = l_\infty^{k_1} \times S_0 \times S_1 \times \ldots \times S_p \subset l_\infty^{k_1} \times l_\infty^{\mu_0} \times l_\infty^{\mu_1} \times \ldots \times l_\infty^{\mu_p} = X_1
\]
	where, for all $i$, the set $S_i \subsetneq l_\infty^{\mu_i}$ is strongly convex and, for $i \neq 0$, $S_i$ is one dimensional (for $i = 0$ we also might have $S_0=\{0\}$) and similarly
\[
	V = l_\infty^{k_2} \times T_0 \times T_1 \times \ldots \times T_q \subset l_\infty^{k_2} \times l_\infty^{\nu_0} \times l_\infty^{\nu_1} \times \ldots \times l_\infty^{\nu_q} = X_2.
\]
	
	Let $\{e_1, \ldots, e_n \}$ and $\{ f_1, \ldots, f_m\}$ be the standard basis for $X_1$ and $X_2$ respectively. First observe that if $\mathbb{R}e_i \subset V$ then there is some $f_j$ with $\mathbb{R}e_i=\mathbb{R}f_j \subset V$. Indeed, if $\mathbb{R}e_i \subset V$ for every $r \geq 0$ we have
\[
 [-r,r]e_i = \bigcap_{e \in \{-1,1\}^{i-1}\times \{0\} \times \{-1,1\}^{n-i} \subset X_1} B(r e,r) \in \mathcal{E}(X).
\]
Note that as $[-r,r]e_i \in \mathcal{E}(X_2)$ it is a cuboid in $X_2$ and therefore $[-r,r]e_i=[-r,r]f_j$.

Hence we have $k = k_1 = k_2$ and can split off the maximal component $l_\infty^k$ from $X_1$, $X_2$ and $V$, that is we can write $V := l_\infty^k \times V' \subset l_\infty^k \times X_i' = X_i$ with
\begin{align*}
	V'= S_0 \times S_1 \times \ldots \times S_p \subset l_\infty^{\mu_0} \times l_\infty^{\mu_1} \times \ldots \times l_\infty^{\mu_p} = X_1'
\end{align*}	
as well as
\begin{align*}
	V'=T_0 \times T_1 \times \ldots \times T_q \subset l_\infty^{\nu_0} \times l_\infty^{\nu_1} \times \ldots \times l_\infty^{\nu_q} = X_2'.
\end{align*}
		
Assume now by contradiction that there are at least two non-trivial factors in the first decomposition, i.e. $p \geq 1$. Since for every factor $l_\infty^{\mu_i}$ there is some $x_i \in l_\infty^{\mu_i}$ with $d(x_i,S_i)=1$ there is some $x=(x_0,x_1, \ldots, x_{p-1},0) \in X_1'$ with 
\[
	B(x,d(x,V')) \cap V' = B(x,1)\cap V' = \{0\}^p \times \left(S_p \cap B^{S_p}(0,1)\right),
\]	
where $B^{S_p}(0,1)$ denotes the unit ball inside $S_p$. But $\{0\} \times (S_p \cap B^{S_p}(0,1)) \subset X_2'$ is not externally hyperconvex in $X_2'$ since it is not a cuboid. It follows that $B(x,d(x,V')) \cap V' \notin \mathcal{E}(X_2')$ which is a contradiction to Proposition~\ref{prop:externally hyperconvex balls}. Therefore, $V$ must consist of only one strongly convex factor.

For the other direction note that $X = l_\infty^k \times (X_1'\sqcup_{V'} X_2')$. The second factor is hyperconvex by Theorem~\ref{thm:gluing along WEH} since $V'$ is strongly convex. Hence also the product is hyperconvex by Lemma~\ref{Lem:Products}.
\end{proof}

\textbf{Acknowledgments.} We would like to thank Prof. Dr. Urs Lang for useful remarks on this work. We gratefully acknowledge support from the Swiss National Science Foundation.


\end{document}